\documentclass[12pt,reqno]{article}

\usepackage[usenames]{color}
\usepackage{amssymb}
\usepackage{amsmath}
\usepackage{amsthm}
\usepackage{amsfonts}
\usepackage{amscd}
\usepackage{graphicx}

\usepackage[colorlinks=true,
linkcolor=webgreen,
filecolor=webbrown,
citecolor=webgreen]{hyperref}

\definecolor{webgreen}{rgb}{0,.5,0}
\definecolor{webbrown}{rgb}{.6,0,0}

\usepackage{color}
\usepackage{fullpage}
\usepackage{float}

\usepackage{graphics}
\usepackage{latexsym}

\setlength{\textwidth}{6.5in}
\setlength{\oddsidemargin}{.1in}
\setlength{\evensidemargin}{.1in}
\setlength{\topmargin}{-.1in}
\setlength{\textheight}{8.4in}

\DeclareMathOperator{\arctanh}{arctanh}

\newcommand{\braces}{\genfrac{\lbrace}{\rbrace}{0pt}{}}


\begin{document}

\theoremstyle{plain}
\newtheorem{theorem}{Theorem}
\newtheorem{corollary}[theorem]{Corollary}
\newtheorem{lemma}{Lemma}
\newtheorem{example}{Example}
\newtheorem{remark}{Remark}

\begin{center}
\vskip 0.5cm{\LARGE\bf 
Binomial Fibonacci sums from Chebyshev polynomials \\
}
\vskip 1cm
{\large 
Kunle Adegoke \\
Department of Physics and Engineering Physics, \\ Obafemi Awolowo University, 220005 Ile-Ife, Nigeria \\
\href{mailto:adegoke00@gmail.com}{\tt adegoke00@gmail.com}

\vskip 0.15 in

Robert Frontczak \\
Independent Researcher \\ Reutlingen, Germany \\
\href{mailto:robert.frontczak@web.de}{\tt robert.frontczak@web.de}

\vskip 0.15 in

Taras Goy  \\
Faculty of Mathematics and Computer Science\\
Vasyl Stefanyk Precarpathian National University, Ivano-Frankivsk, Ukrai\-ne\\
\href{mailto:taras.goy@pnu.edu.ua}{\tt taras.goy@pnu.edu.ua}
}

\end{center}

\vskip 0.15cm

\begin{abstract} 
We explore new types of binomial sums with Fibonacci and Lucas numbers. The binomial coefficients under consideration are $\frac{n}{n+k}\binom{n+k}{n-k}$ and $\frac{k}{n+k}\binom{n+k}{n-k}$.
The identities are derived by relating the underlying sums to Chebyshev polynomials. Finally, some combinatorial sums are studied and a connection to a recent paper by Chu and Guo from 2022 is derived.  

\vskip 0.15cm 

\noindent 2020 {\it Mathematics Subject Classification}: Primary 11B39; Secondary 11B37.
\vskip 0.1cm 
\noindent \emph{Keywords:} Fibonacci (Lucas) number, Chebyshev polynomial, binomial coefficient.
\end{abstract}

\vskip 0.25cm

\section{Preliminaries}

As usual, the Fibonacci numbers $F_n$ and the Lucas numbers $L_n$ are defined, for \text{$n\in\mathbb Z$}, 
through the recurrence relations $F_n = F_{n-1}+F_{n-2}$, $n\ge 2$, with initial values $F_0=0$, $F_1=1$ and 
$L_n = L_{n-1}+L_{n-2}$ with $L_0=2$, $L_1=1$. For negative subscripts we have $F_{-n}=(-1)^{n-1}F_n$ and $L_{-n}=(-1)^n L_n$.
They possess the explicit formulas (Binet forms)
\begin{equation}\label{binet}
F_n = \frac{\alpha^n - \beta^n }{\alpha - \beta },\quad L_n = \alpha^n + \beta^n,\quad n\in\mathbb Z.
\end{equation}

The sequences $(F_n)_{n\geq 0}$ and $(L_n)_{n\geq 0}$ are indexed in the On-Line Encyclopedia of Integer Sequences 
\cite{OEIS} as entries A000045 and A000032, respectively. For more information we refer to Koshy \cite{Koshy} and Vajda \cite{Vajda} 
who have written excellent books dealing with Fibonacci and Lucas numbers. 

For any integer $n\geq0$, the Chebyshev polynomials $\{T_n(x)\}_{n\geq0}$ of the first kind are defined 
by the second-order recurrence  relation \cite{Mason}
\begin{equation*}
T_0(x) = 1,\quad T_1(x) = x, \quad T_{n+1}(x) = 2x T_n(x) - T_{n-1}(x),
\end{equation*}
while the Chebyshev polynomials $\{U_n(x)\}_{n\geq0}$ of the second kind are defined by
\begin{equation*}
U_0(x) = 1,\quad U_1(x) = 2x, \quad U_{n+1}(x) = 2xU_n(x) - U_{n-1}(x).
\end{equation*}
The Chebyshev polynomials possess the representations    
\begin{gather}
T_n(x) = \sum_{k=0}^{\lfloor{n}/{2}\rfloor} {n\choose 2k}(x^2-1)^kx^{n-2k}, \label{ChebRep1} \\
U_n(x) = \sum_{k=0}^{\lfloor{n}/{2}\rfloor} {n+1\choose 2k+1}(x^2-1)^kx^{n-2k}. \label{ChebRep2} 
\end{gather}

Also, sequences $T_n(x)$ and $U_n(x)$ have the Binet-like  formulas
\begin{gather}
T_n (x) = \frac{1}{2} \left( (x + \sqrt {x^2 - 1} )^n + (x - \sqrt {x^2 - 1} )^n \right)\!, \label{eq.lgfu65w} \\
U_n (x) = \frac{1}{2\sqrt {x^2  - 1}} \left( (x + \sqrt {x^2 - 1} )^{n + 1} - (x - \sqrt {x^2 - 1} )^{n + 1} \right)\!.
\end{gather}

The Chebyshev polynomials of the first and second kind are connected by
\begin{equation*}
T_{n + 1}(x) = xT_n(x) - (1 - x^2)U_{n - 1}(x)
\end{equation*}
and
\begin{equation}\label{eq.w83lskz}
U_{n + 1}(x) = xU_n(x) + T_{n + 1}(x);
\end{equation}
from which we also get
\begin{equation*}
T_n(x)=\frac12\big(U_n(x) - U_{n - 2}(x)\big).
\end{equation*}

The properties of Chebyshev polynomials have been studied extensively in the literature. 
The reader can find in the recent papers \cite{Abd,Fan,Frontczak,Gao,Kilic2,Kim,Li-Wenpeng,Li,Zhang}
additional information about them, especially about their products, convolutions, power sums as well as their connections to Fibonacci numbers and polynomials. 

There exists a countless number of binomial sums identities involving Fibonacci and Lucas numbers. For some new articles in this field we refer to the papers \cite{Adegoke1,Adegoke2,Adegoke3,Bai}.
In this paper, we will deal with sums of the following forms
\begin{equation*}
\sum_{k=0}^n \frac {n}{n + k} \binom{n + k}{n - k} x_k \qquad\mbox{and}\qquad \sum_{k=0}^n \frac {k}{n + k} \binom{n + k}{n - k} x_k,
\end{equation*}
where $x_k$ will be some weighted Fibonacci (Lucas) entries. Among the huge amount of Fibonacci (Lucas) sums
existing in the literature we couldn't find references treating these forms, although the binomial coefficients remind us of coefficients of Girard--Waring type \cite{Gould}, Jennings \cite{Jennings}, and also Kilic and Ioanescu \cite{Kilic1}. 
It may be also of interest that we can write the sums under consideration equivalently involving three binomial coefficients. For instance, the first type of sums equals (the second is similar)
\begin{equation*}
\sum_{k=0}^n  \frac{\binom {n+k-1} {k} \binom {n}{k}}{\binom {2k}{k}} x_k.
\end{equation*}

\section{Binomial Fibonacci and Lucas sums from identities involving $T_n(x)$}

We start by deriving two identities involving $T_n(x)$, which we prove for the readers' convenience. 
\begin{theorem}\label{thm1}
For all $x\in\mathbb{C}$ and non-negative integers $n$ and $m$ we have the following identities:
\begin{gather}\label{main_id1}
\sum_{k=0}^n (-2)^k\frac n{n + k}\binom{n + k}{n - k} (1\mp x)^k = (\pm 1)^n T_n(x),\\
\label{main_id2}
\sum_{k=0}^n (-2)^k \frac n{n + k}\binom{n + k}{n - k} \big(1\mp T_m(x)\big)^k = (\pm 1)^n T_{n m}(x).
\end{gather}
\end{theorem}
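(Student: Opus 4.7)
My plan is to first prove the upper-sign case of \eqref{main_id1}, namely
\[
T_n(x)=\sum_{k=0}^n (-2)^k\,\frac{n}{n+k}\binom{n+k}{n-k}(1-x)^k.
\]
The lower-sign case of \eqref{main_id1} then follows at once on replacing $x$ by $-x$ and invoking the parity $T_n(-x)=(-1)^nT_n(x)$. Identity \eqref{main_id2} is obtained from \eqref{main_id1} by substituting $T_m(x)$ for $x$ and using the composition law $T_n(T_m(x))=T_{nm}(x)$; so everything reduces to the single identity just displayed.

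For that identity I would use the generating function $\sum_{n\ge 0}T_n(x)t^n=(1-xt)/(1-2xt+t^2)$. Setting $u:=1-x$, one has $1-2xt+t^2=(1-t)^2+2ut$ and $1-xt=(1-t)+ut$, so splitting the fraction gives
\[
\sum_{n\ge 0}T_n(x)\,t^n=\frac{1}{1-t}\cdot\frac{1}{1+\frac{2ut}{(1-t)^2}}+\frac{ut}{(1-t)^2}\cdot\frac{1}{1+\frac{2ut}{(1-t)^2}}.
\]
Expanding each geometric factor in powers of $-2ut/(1-t)^2$, combining, and reading off the coefficient of $t^n$ via $[t^m](1-t)^{-(r+1)}=\binom{m+r}{r}$ yields
\[
T_n(x)=\sum_{k\ge 0}(-2u)^k\!\left[\binom{n+k}{2k}-\tfrac{1}{2}\binom{n+k-1}{2k-1}\right],
\]
the second binomial being vacuous at $k=0$.

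The proof is then completed by the routine identity
\[
\binom{n+k}{2k}-\tfrac{1}{2}\binom{n+k-1}{2k-1}=\frac{n}{n+k}\binom{n+k}{n-k},
\]
which comes from $\binom{n+k}{2k}=\tfrac{n+k}{2k}\binom{n+k-1}{2k-1}$ after a single line of arithmetic and the symmetry $\binom{n+k}{n-k}=\binom{n+k}{2k}$. The only real obstacle is the bookkeeping in the geometric-series expansion and handling the $k=0$ term correctly. An equally clean alternative is induction on $n$: verify the base cases $n=0,1$ and show that the right-hand side satisfies the Chebyshev three-term recurrence, which (after writing $2x=2-2(1-x)$) reduces to the identity $\tfrac{n+1}{n+1+k}\binom{n+1+k}{n+1-k}=2\tfrac{n}{n+k}\binom{n+k}{n-k}+\tfrac{n}{n+k-1}\binom{n+k-1}{n-k+1}-\tfrac{n-1}{n+k-1}\binom{n+k-1}{n-k-1}$, a direct factorial check.
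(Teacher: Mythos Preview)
Your proof is correct and takes a genuinely different route from the paper. The paper identifies $T_n$ as a rescaled Jacobi polynomial, $T_n(x)=\dfrac{4^n(n!)^2}{(2n)!}P_n^{(-1/2,-1/2)}(x)$, and reads the expansion in powers of $x-1$ directly from the standard hypergeometric series for $P_n^{(a,b)}$, then simplifies the resulting Gamma factors via $\Gamma\bigl(n+\tfrac12\bigr)=\dfrac{(2n)!\sqrt\pi}{4^n n!}$. You instead work entirely with the ordinary generating function $\sum_{n\ge0}T_n(x)t^n=(1-xt)/(1-2xt+t^2)$, rewrite it in the variable $u=1-x$, expand geometrically, and extract coefficients; the combination $\binom{n+k}{2k}-\tfrac12\binom{n+k-1}{2k-1}$ then collapses to $\tfrac{n}{n+k}\binom{n+k}{2k}$ by a one-line identity. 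Your argument is more elementary and self-contained, requiring no special-function machinery, whereas the paper's route is shorter once the Jacobi connection is taken as known. Both approaches deduce \eqref{main_id2} in the same way, by substituting $T_m(x)$ for $x$ and invoking $T_n\circ T_m=T_{nm}$, and your reduction of the lower sign via $T_n(-x)=(-1)^nT_n(x)$ is exactly what the paper leaves implicit.
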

\begin{proof} It suffices to prove \eqref{main_id1}. Let $P_n^{(a,b)}(x)$, $a,b>-1$, be the Jacobi polynomial. Then $P_n^{(a,b)}(x)$
has the truncated series representation
\begin{equation*}
P_n^{(a,b)}(x) = \frac{\Gamma(a+n+1)}{ n!\Gamma(a+b+n+1)} \sum_{k=0}^n \binom {n}{k} \frac{\Gamma(a+b+n+k+1)}{2^k \Gamma(a+k+1)} (x-1)^k.
\end{equation*}
Now, we use the fact that
$$T_n(x) = \frac{4^n (n!)^2}{(2n)!} P_n^{(-1/2,-1/2)}(x).
$$
The statement follows after some steps of simplifications using
$\Gamma\big(n+\frac{1}{2} \big) = \frac{(2n)! \sqrt{\pi}}{4^n n!}$.

Identity \eqref{main_id2} follows immediately if $x\rightarrow T_{m}(x)$, because $T_n\big(T_m(x)\big)=T_{nm}(x)$.
\end{proof}

We note that the relation $T_n(2x^2-1)=T_{2n}(x)$ in conjunction with \eqref{main_id1} yields
\begin{equation}\label{main_id3}
\sum_{k=0}^n 4^k \frac n{n + k}\binom{n + k}{n - k} (x^2-1)^k = T_{2n}(x)
\end{equation}
and
\begin{equation}\label{main_id4}
\sum_{k=0}^n (-4)^k \frac n{n + k}\binom{n + k}{n - k} x^{2k} = (-1)^n T_{2n}(x).
\end{equation}
\begin{example}
Setting $x=0$ and $x=-1$ in \eqref{main_id1} we get
\begin{gather*}
\sum_{k=0}^n (-2)^k \frac n{n + k}\binom{n + k}{n - k} = 
\begin{cases}
0, & \text{\rm $n$ odd;} \\ 
(-1)^{n/2}, & \text{\rm $n$  even,} 
\end{cases}\\
\sum_{k=0}^n (-4)^k \frac n{n + k}\binom{n + k}{n - k} = (-1)^{n}.
\end{gather*}
\end{example}
\begin{example}\label{example2}
Directly from \eqref{main_id1} we have the following interesting sums
\begin{gather*}
\sum_{k=0}^n (-2)^k \frac {n}{n + k} \binom{n + k}{n - k} L_k = T_n(\alpha) + T_n(\beta),\\
\sum_{k=0}^n (-2)^{k} \frac {n}{n + k} \binom{n + k}{n - k} F_k =  - \frac{T_n(\alpha)-T_n(\beta)}{\sqrt{5}}.
\end{gather*}
It follows from these 
formulas that $T_n(\alpha)+T_n(\beta)$ and $\frac{T_n(\alpha)-T_n(\beta)}{\sqrt5}$ are integers. 
The sequence $\big\{\frac{T_n(\alpha)-T_n(\beta)}{\sqrt5}\big\}_{n\geq 0}=\big\{0, 1, 2, 5, 16, 45, 130, 377, 1088, 3145, \ldots\big\}$ 
is sequence A138573 in the OEIS \cite{OEIS}. But more is true as we show in the next theorem.
\end{example}
\begin{theorem}
For any integer $s$, the sequences $\{T_n(\alpha^s)+T_n(\beta^s)\}_{n\geq 0}$ and 
$\big\{\frac{T_n(\alpha^s)-T_n(\beta^s)}{\sqrt5}\big\}_{n\geq 0}$ are integers. Moreover, the sequences
$\{U_n(\alpha^s)+U_n(\beta^s)\}_{n\geq 0}$ and $\big\{\frac{U_n(\alpha^s)-U_n(\beta^s)}{\sqrt5}\big\}_{n\geq 0}$
are also integer sequences for each $s$.
\end{theorem}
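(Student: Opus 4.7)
\medskip

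The plan is to exhibit coupled linear recurrences with integer coefficients satisfied jointly by the ``symmetric'' and ``antisymmetric'' combinations, and then induct. Write $\alpha^s=\frac{L_s+F_s\sqrt 5}{2}$ and $\beta^s=\frac{L_s-F_s\sqrt 5}{2}$, which holds for every integer $s$ by the Binet forms \eqref{binet}. Set
\[
A_n=T_n(\alpha^s)+T_n(\beta^s),\qquad B_n=\frac{T_n(\alpha^s)-T_n(\beta^s)}{\sqrt 5}.
\]
The initial values are integers: $A_0=2$, $A_1=\alpha^s+\beta^s=L_s$, $B_0=0$, $B_1=F_s$.

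Next I would apply the Chebyshev recurrence $T_{n+1}(x)=2xT_n(x)-T_{n-1}(x)$ at $x=\alpha^s$ and $x=\beta^s$ and add or subtract. Using
\[
2\alpha^s T_n(\alpha^s)\pm 2\beta^s T_n(\beta^s)=(L_s+F_s\sqrt 5)T_n(\alpha^s)\pm(L_s-F_s\sqrt 5)T_n(\beta^s),
\]
the cross-terms involving $\sqrt 5$ either cancel or combine cleanly, yielding the coupled system
\[
A_{n+1}=L_s A_n+5F_s B_n-A_{n-1},\qquad B_{n+1}=F_s A_n+L_s B_n-B_{n-1}.
\]
Since $L_s,F_s\in\mathbb Z$, induction on $n$ gives $A_n,B_n\in\mathbb Z$ for all $n\geq 0$.

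For the second kind, set $C_n=U_n(\alpha^s)+U_n(\beta^s)$ and $D_n=\frac{U_n(\alpha^s)-U_n(\beta^s)}{\sqrt 5}$. The initial values are $C_0=2$, $C_1=2L_s$, $D_0=0$, $D_1=2F_s$, all integers, and the recurrence $U_{n+1}(x)=2xU_n(x)-U_{n-1}(x)$ is of the same shape as for $T_n$, so the exact same manipulation produces
\[
C_{n+1}=L_s C_n+5F_s D_n-C_{n-1},\qquad D_{n+1}=F_s C_n+L_s D_n-D_{n-1},
\]
and integrality of $C_n,D_n$ follows by induction.

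The only mildly tricky point is choosing to couple the two sequences rather than trying to show each satisfies a three-term recurrence on its own: the naive attempt $A_{n+1}=2\alpha^sT_n(\alpha^s)+2\beta^sT_n(\beta^s)-A_{n-1}$ mixes in $\alpha^s T_n(\alpha^s)$, which is not expressible in $A_n$ alone, and that is precisely what forces the irrational parts $F_s\sqrt 5$ to route through the partner sequence $B_n$ (or $D_n$). Once one spots that, the rest is a short induction.
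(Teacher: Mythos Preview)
Your argument is correct. The coupled recurrences
\[
A_{n+1}=L_sA_n+5F_sB_n-A_{n-1},\qquad B_{n+1}=F_sA_n+L_sB_n-B_{n-1}
\]
follow exactly as you say from $2\alpha^s=L_s+F_s\sqrt5$ and $2\beta^s=L_s-F_s\sqrt5$, the integer initial data are right, and the same system (with the same derivation) handles $U_n$ because the three-term recurrence has the same shape. Induction then finishes the proof.

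This is a genuinely different route from the paper. The paper does not use the recurrence at all; instead it expands $T_n$ and $U_n$ via the representations \eqref{ChebRep1}--\eqref{ChebRep2}, simplifies $(\alpha^{2s}-1)^k$ using $\alpha^s\beta^s=(-1)^s$ (which forces a case split on the parity of $s$), and then writes $T_n(\alpha^s)\pm T_n(\beta^s)$ and the $U_n$ analogues as explicit finite sums in $L_{s(n-k)}$, $F_{s(n-k)}$ with integer binomial weights. What the paper gains is closed-form expressions for the four sequences, not just integrality. What your approach gains is brevity and uniformity: no parity case analysis, no power-sum expansion, just a two-line coupled recurrence and induction. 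If explicit formulas are not needed downstream, your proof is the cleaner one.
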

\begin{proof}
For complex variable $x$, from \eqref{ChebRep1} we have
\begin{gather*}
T_n(\alpha^sx) = \sum_{k = 0}^{\lfloor{n/2}\rfloor} \sum_{j = 0}^{k}(-1)^{j} \binom{n}{2k} \binom{k}{j} (\alpha^sx)^{n-2j},\\
T_n(\beta^sx) = \sum_{k = 0}^{\lfloor{n/2}\rfloor} \sum_{j = 0}^{k}(-1)^{j} \binom{n}{2k}\binom{k}{j} (\beta^sx)^{n-2j}.
\end{gather*}

In particular, 
\begin{gather*}
T_n(\alpha^s) =
\begin{cases}
\sum\limits_{k = 0}^{\lfloor{n/2}\rfloor} \binom{n}{2k} L^k_s \alpha^{s(n-k)}, & \text{\rm $s$ odd;} \\ 
\sum\limits_{k = 0}^{\lfloor{n/2}\rfloor} \binom{n}{2k} (\sqrt5 F_s)^k \alpha^{s(n-k)}, & \text{\rm $s$  even,} 
\end{cases}\\
T_n(\beta^s) = 
\begin{cases}
\sum\limits_{k = 0}^{\lfloor{n/2}\rfloor} \binom{n}{2k} L^k_s \beta^{s(n-k)}, & \text{\rm $s$ odd;} \\ 
\sum\limits_{k = 0}^{\lfloor{n/2}\rfloor} \binom{n}{2k}(-\sqrt5 F_s)^k \beta^{s(n-k)}, & \text{\rm $s$  even.} 
\end{cases}
\end{gather*}

Thus, 
\begin{gather*}
T_n(\alpha^s)+T_n(\beta^s)=\sum_{k = 0}^{\lfloor{n/2}\rfloor} \binom{n}{2k}L^k_s L_{s(n-k)},\quad s\text{ odd},\\
\frac{T_n(\alpha^s)-T_n(\beta^s)}{\sqrt5}=\sum_{k = 0}^{\lfloor{n/2}\rfloor} \binom{n}{2k}L^k_s F_{s(n-k)},\quad s\text{ odd},
\end{gather*}
and
\begin{gather*}
T_n(\alpha^s) + T_n(\beta^s) = \sum_{k = 0}^{\lfloor{n/4}\rfloor} 5^k F^{2k}_s\left(\binom{n}{4k}L_{s(n-2k)} 
+ 5 \binom{n}{4k+2} F_s F_{s(n-2k-1)}\right)\!,\quad s\text{ even},\\
\frac{T_n(\alpha^s) - T_n(\beta^s)}{\sqrt5} = \sum_{k = 0}^{\lfloor{n/4}\rfloor} 5^k F^{2k}_s\left(\binom{n}{4k}F_{s(n-2k)} 
+ \binom{n}{4k+2} F_s L_{s(n-2k-1)}\right)\!,\quad s\text{ even}.
\end{gather*}

For Chebyshev polynomials of the second kind similar formulas hold true. From  \eqref{ChebRep2} using Binet's formulas \eqref{binet} we have
\begin{gather*}
U_n(\alpha^s)+U_n(\beta^s)=\sum_{k = 0}^{\lfloor{n/2}\rfloor} \binom{n+1}{2k+1}L^k_s L_{s(n-k)},\quad s\text{ odd},\\
\frac{U_n(\alpha^s)-U_n(\beta^s)}{\sqrt5}=\sum_{k = 0}^{\lfloor{n/2}\rfloor} \binom{n+1}{2k+1}L^k_s F_{s(n-k)},\quad s\text{ odd},
\end{gather*}
and
\begin{gather*}
U_n(\alpha^s) + U_n(\beta^s)=\sum_{k = 0}^{\lfloor{n/4}\rfloor}  
 5^k F^{2k}_s \left(\binom{n+1}{4k+1}L_{s(n-2k)} + 5 \binom{n+1}{4k+3} F_s F_{s(n-2k-1)}\right)\!,\quad s\text{ even},\\
\frac{U_n(\alpha^s) - U_n(\beta^s)}{\sqrt5} = \sum_{k = 0}^{\lfloor{n/4}\rfloor}  
5^k F^{2k}_s\left(\binom{n+1}{4k+1}F_{s(n-2k)} + \binom{n+1}{4k+3} F_s L_{s(n-2k-1)}\right)\!,\quad s\text{ even}.
\end{gather*}
\end{proof}
\begin{example} Here we proceed with additional interesting sums of the same kind as in Example~\ref{example2}:
\begin{gather*}
\sum_{k=0}^n (-2)^k \frac {n}{n + k} \binom{n + k}{n-k} L_{2k} = (-1)^n \big(T_n(\alpha)+T_n(\beta)\big),\\
\sqrt5 \sum_{k=0}^n  (-2)^k \frac {n}{n + k} \binom{n + k}{n-k} F_{2k} = (-1)^n\big(T_n(\alpha)-T_n(\beta)\big),\\
\sum_{k=0}^n (-2)^k \frac {n}{n + k}\binom{n + k}{n-k} L_{3k} = (-1)^n \big(T_n(2\alpha)+T_n(2\beta)\big),\\
\sum_{k=0}^n (-2)^k \frac {n}{n + k}\binom{n + k}{n-k} F_{3k} = \frac{(-1)^n}{\sqrt5} \big(T_n(2\alpha)-T_n(2	\beta)\big),\\
\sum_{k=0}^n  \frac {n}{n + k} \binom{n + k}{n-k} L_k  = T_n\Big(\frac{\sqrt5\alpha}2\Big)+T_n\Big(\frac{\sqrt5\beta}2\Big),\\
\sum_{k=0}^n  \frac {n}{n + k} \binom{n + k}{n-k} F_k  =\frac{1}{\sqrt5}\Big( T_n\Big(\frac{\sqrt5\alpha}2\Big)-T_n\Big(\frac{\sqrt5\beta}2\Big)\Big),\\
\sum_{k=0}^n  4^k \frac {n}{n + k} \binom{n + k}{n-k} L_{k} = T_n(2\alpha)+T_n(2\beta),\\
\sum_{k=0}^n 4^k \frac {n}{n + k}\binom{n + k}{n-k} F_{k} = \frac{1}{\sqrt5}\big(T_n(2\alpha)-T_n(2\beta)\big).
\end{gather*}
\end{example}

By substituting $x=L_p/2$ and $x=\sqrt 5F_p/2$ in~\eqref{eq.lgfu65w}, in turn, we obtain the results stated in Lemma~\ref{lem.v25emwx}.
\begin{lemma}\label{lem.v25emwx}
If $p$ is an integer, then
\begin{gather}
T_n\Big(\frac{L_p}2\Big) =\frac12L_{pn},\quad  \text{$p$ \rm even,}\label{eq.xgyt5jf} \\
T_n \Big( {\frac{\sqrt 5F_p}{2}} \Big) = \begin{cases}
\frac12 L_{pn},&\text{\rm $p$ odd, $n$ even;}  \\ 
\frac{\sqrt 5}{2} F_{pn},&\text{\rm $p$ odd, $n$ odd.} \label{eq.r1jl3d5}
 \end{cases}
 \end{gather}
In particular,
\begin{gather}
T_n\Big(\frac32\Big)=\frac12L_{2n},\nonumber\\
T_n \Big( {\frac{\sqrt 5 }{2}} \Big) =  \begin{cases}
 \frac12L_{n},&\text{\rm $n$ even;}  \\ 
 \frac{\sqrt 5}2F_n , &\text{\rm  $n$ odd;}  \\ 
 \end{cases}\label{eq.a8aqw1m}\\
T_n \big(\sqrt 5\big) =  \begin{cases}
\frac12 L_{3n},&\text{\rm  $n$ even;}  \\ 
 \frac{\sqrt 5}2F_{3n}, & \text{\rm  $n$ odd.} 
 \end{cases}\label{eq.l6oienp}
\end{gather}
\end{lemma}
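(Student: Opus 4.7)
The plan is to substitute the two proposed values of $x$ directly into the Binet-like formula \eqref{eq.lgfu65w} and simplify the discriminant $\sqrt{x^2-1}$ using the fundamental identity $L_p^2 - 5F_p^2 = 4(-1)^p$. The particular cases \eqref{eq.a8aqw1m} and \eqref{eq.l6oienp} then drop out by choosing $p=2$, $p=1$, and $p=3$ respectively.

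For $x = L_p/2$ with $p$ even, the identity $L_p^2 - 5F_p^2 = 4$ gives $x^2 - 1 = 5F_p^2/4$, so $\sqrt{x^2-1} = \sqrt{5}\,F_p/2$ (choosing $F_p \ge 0$; the sign is immaterial since the two exponents cancel). Hence
$$x + \sqrt{x^2-1} = \frac{L_p + \sqrt{5}\,F_p}{2} = \alpha^p, \qquad x - \sqrt{x^2-1} = \beta^p,$$
by the Binet forms \eqref{binet}. Plugging into \eqref{eq.lgfu65w} yields $T_n(L_p/2) = \tfrac12(\alpha^{pn} + \beta^{pn}) = \tfrac12 L_{pn}$, which is \eqref{eq.xgyt5jf}.

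For $x = \sqrt{5}\,F_p/2$ with $p$ odd, the identity becomes $5F_p^2 - L_p^2 = 4$, so $x^2 - 1 = L_p^2/4$ and $\sqrt{x^2-1} = L_p/2$. Therefore
$$x + \sqrt{x^2-1} = \frac{\sqrt{5}\,F_p + L_p}{2} = \alpha^p, \qquad x - \sqrt{x^2-1} = \frac{\sqrt{5}\,F_p - L_p}{2} = -\beta^p.$$
Substituting into \eqref{eq.lgfu65w} gives
$$T_n\Big(\tfrac{\sqrt{5}\,F_p}{2}\Big) = \tfrac{1}{2}\bigl(\alpha^{pn} + (-1)^n \beta^{pn}\bigr),$$
which is $\tfrac12 L_{pn}$ when $n$ is even and $\tfrac{\sqrt{5}}{2} F_{pn}$ when $n$ is odd, establishing \eqref{eq.r1jl3d5}. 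The specializations $p=2$ (using $L_2 = 3$), $p=1$ (using $F_1 = 1$), and $p=3$ (using $F_3 = 2$) then yield the three displayed particular cases.

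There is essentially no main obstacle here; the only subtle point is to verify that the correct sign of $\sqrt{x^2-1}$ produces precisely the pair $\{\alpha^p, \beta^p\}$ (or $\{\alpha^p, -\beta^p\}$) on the nose, which is guaranteed by the parity conditions on $p$ through $L_p^2 - 5F_p^2 = 4(-1)^p$.
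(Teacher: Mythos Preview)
Your proof is correct and is exactly the approach the paper takes: substitute $x=L_p/2$ and $x=\sqrt{5}\,F_p/2$ into the Binet-like formula \eqref{eq.lgfu65w} and simplify via $L_p^2-5F_p^2=4(-1)^p$. The paper merely states this in one line without writing out the computation, so your version actually supplies the details.
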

\begin{theorem}\label{thm_xxx}
If $n$ is a positive integer and $p$ is an integer, then
\begin{gather*}
\sum_{k = 0}^n (-1)^{(p-1)(n-k)} \frac{n}{n + k} \binom{n + k}{n-k}  L_p^{2k} = \frac{L_{2pn}}{2},\\
\sum_{k = 0}^n (-1)^{p(n-k)} \frac{n}{n + k} \binom{n + k}{n-k} 5^k F_p^{2k} = \frac{L_{2pn}}{2}.
\end{gather*}
\end{theorem}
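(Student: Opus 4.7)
The plan is to specialize the identities \eqref{main_id3} and \eqref{main_id4} at $x=L_p/2$ and $x=\sqrt5F_p/2$, and then read off the right-hand side via Lemma~\ref{lem.v25emwx}. The single algebraic input is the Lucas--Fibonacci relation $L_p^2-5F_p^2=4(-1)^p$, which gives
\[
(L_p/2)^2-1=5F_p^2/4 \text{ for $p$ even,}\qquad (\sqrt5F_p/2)^2-1=L_p^2/4 \text{ for $p$ odd.}
\]
Each of these two rearrangements is exactly what is needed to reduce the bracket in \eqref{main_id3} or \eqref{main_id4} to either $L_p^{2k}$ or $5^kF_p^{2k}$.

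For the first identity, I would split on the parity of $p$. When $p$ is odd the prefactor $(-1)^{(p-1)(n-k)}$ equals $1$, and substituting $x=\sqrt5F_p/2$ in \eqref{main_id3} collapses $4^k(x^2-1)^k$ to $L_p^{2k}$; by \eqref{eq.r1jl3d5} applied to the even index $2n$, the right-hand side is $T_{2n}(\sqrt5F_p/2)=L_{2pn}/2$. When $p$ is even I instead take $x=L_p/2$ in \eqref{main_id4}, so $(-4)^kx^{2k}=(-1)^kL_p^{2k}$ and the right-hand side is $(-1)^nT_{2n}(L_p/2)=(-1)^nL_{2pn}/2$ by \eqref{eq.xgyt5jf}; multiplying through by $(-1)^n$ converts the summand sign into $(-1)^{n+k}=(-1)^{n-k}=(-1)^{(p-1)(n-k)}$.

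The second identity is proved by interchanging the roles of the two substitutions. For $p$ even, apply \eqref{main_id3} with $x=L_p/2$: then $4^k(x^2-1)^k=5^kF_p^{2k}$ and the right-hand side is $T_{2n}(L_p/2)=L_{2pn}/2$ by \eqref{eq.xgyt5jf}, matching the prefactor $(-1)^{p(n-k)}=1$. For $p$ odd, apply \eqref{main_id4} with $x=\sqrt5F_p/2$: then $(-4)^kx^{2k}=(-1)^k5^kF_p^{2k}$ and the right-hand side is $(-1)^nT_{2n}(\sqrt5F_p/2)=(-1)^nL_{2pn}/2$ by \eqref{eq.r1jl3d5}; multiplying by $(-1)^n$ gives the summand sign $(-1)^{n+k}=(-1)^{n-k}=(-1)^{p(n-k)}$.

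No real difficulty is expected; the arithmetic on the Chebyshev side is an immediate consequence of Lemma~\ref{lem.v25emwx}, so the only thing that requires attention is the parity-dependent bookkeeping that matches $(-1)^{(p-1)(n-k)}$ and $(-1)^{p(n-k)}$ to the $(-1)^n$ produced by \eqref{main_id4} (versus the trivial sign produced by \eqref{main_id3}).
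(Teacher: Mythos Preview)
Your proof is correct and amounts to the same argument as the paper's, packaged slightly differently. The paper substitutes $x=L_{2p}/2$ directly into \eqref{main_id1} and uses the parity-dependent factorizations $L_{2p}\pm 2\in\{L_p^2,\,5F_p^2\}$ from~\eqref{eq.r1j2cpo}; you instead substitute $x=L_p/2$ or $x=\sqrt5F_p/2$ into the derived identities \eqref{main_id3}--\eqref{main_id4} and use $L_p^2-5F_p^2=4(-1)^p$. Since \eqref{main_id3}--\eqref{main_id4} come from \eqref{main_id1} via $T_n(2x^2-1)=T_{2n}(x)$, and $2(L_p/2)^2-1=L_{2p}/2$ (for $p$ even) and $2(\sqrt5F_p/2)^2-1=L_{2p}/2$ (for $p$ odd), the two routes are literally the same substitution viewed through the Chebyshev duplication formula.
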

\begin{proof}
Set $x=L_{2p}/2$ in~\eqref{main_id1}, use~\eqref{eq.xgyt5jf} and the fact that
\begin{equation}\label{eq.r1j2cpo}
L_{2p} - 2 =  \begin{cases}
 5F_p^2,&\text{$p$ even}; \\ 
 L_p^2, &\text{$p$ odd};\\ 
 \end{cases}\qquad 
L_{2p} + 2 =  \begin{cases}
 5F_p^2,& \text{$p$ odd};\\ 
 L_p^2,& \text{$p$ even};\\ 
 \end{cases}
\end{equation}
to get
\begin{gather*}
\sum_{k = 0}^n \frac{n}{n + k} \binom{n + k}{n - k} ( \pm 1)^k L_p^{2k} = \frac{{( \pm 1)^n }}{2}L_{2pn}\quad\braces{\mbox{$p$ odd}}{\mbox{$p$ even}},\\
\sum_{k = 0}^n \frac{n}{n + k} \binom{n + k}{n - k} ( \mp 5)^k F_p^{2k} = \frac{{( \mp 1)^n }}{2}L_{2pn}\quad\braces{\mbox{$p$ odd}}{\mbox{$p$ even}},
\end{gather*}
from which the stated identities follow. 
\end{proof}

Theorem \ref{thm_xxx} can be generalized in the following way.
\begin{theorem}
If $n$ is a positive integer and $p$ is an integer, then we have
\begin{gather*}
\sum_{k = 0}^n (-1)^{n-k} \frac{n}{n + k} \binom{n + k}{n - k}(2 \pm \sqrt{5}F_{pm})^k = \frac{\pm \sqrt{5}}{2}  F_{p m n},
\quad \text{\rm $p$, $m$, $n$ odd}, \\
\sum_{k = 0}^n (-1)^{n-k} \frac{n}{n + k} \binom{n + k}{n - k}(2 \pm L_{pm})^k = \frac{( \pm 1)^n}{2} L_{p m n},
\quad \text{\rm $p$ odd, $m$ even}, \\
\sum_{k = 0}^n (-1)^{n-k} \frac{n}{n + k} \binom{n + k}{n - k} (2 \pm L_{pm})^k = \frac{( \pm 1)^n }{2} L_{p m n},\quad \text{\rm $p$ even.} 
\end{gather*}
\end{theorem}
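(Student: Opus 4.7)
The strategy mirrors that of Theorem~\ref{thm_xxx}. All three identities arise by specializing $x$ in identity~\eqref{main_id1} so that $T_n(x)$ reduces to a Fibonacci or Lucas number via Lemma~\ref{lem.v25emwx}, and by rewriting the factor $(-2)^k(1\mp x)^k$ as $(-1)^k(2\pm 2x)^k$ to produce the shape required on the left-hand side.

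For the first identity, the plan is to set $x = \sqrt{5}F_{pm}/2$. Since $p$ and $m$ are both odd, $pm$ is odd, and \eqref{eq.r1jl3d5} applied with $n$ odd gives $T_n(\sqrt{5}F_{pm}/2) = \frac{\sqrt{5}}{2}F_{pmn}$. Substituting into \eqref{main_id1} and using $(-2)^k(1\mp x)^k = (-1)^k(2\pm\sqrt{5}F_{pm})^k$ yields
\begin{equation*}
\sum_{k=0}^n (-1)^k(2\pm\sqrt{5}F_{pm})^k \,\frac{n}{n+k}\binom{n+k}{n-k} = (\pm 1)^n \cdot \frac{\sqrt{5}}{2}F_{pmn}.
\end{equation*}
I would then rewrite $(-1)^k = (-1)^n(-1)^{n-k}$ and, using that $n$ is odd, absorb the extra factor $(-1)^n = -1$ into the right-hand side; combining the two choices of sign delivers the claimed $\frac{\pm\sqrt{5}}{2}F_{pmn}$.

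For the remaining two identities the common feature is that $pm$ is even (either because $m$ is even while $p$ is odd, or because $p$ itself is even). I would set $x = L_{pm}/2$ and apply \eqref{eq.xgyt5jf} to get $T_n(L_{pm}/2) = L_{pmn}/2$, with no restriction on the parity of $n$. The identical manipulation $(-2)^k(1\mp x)^k = (-1)^k(2\pm L_{pm})^k$, followed by $(-1)^k = (-1)^n(-1)^{n-k}$, then produces both identities at once, with the factor $(\pm 1)^n$ on the right coming jointly from the $(-1)^n$ just introduced and the $(\pm 1)^n$ already present in \eqref{main_id1}.

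The only delicate aspect is sign bookkeeping: one must check that the two signs produced by $\pm$ in \eqref{main_id1}, the factor $(-1)^k$ created by pulling a minus sign out of $(-2 \pm 2x)^k$, and the sign $(-1)^n$ arising from the odd-$n$ restriction in the first identity, all line up correctly with $\pm$ and $(\pm 1)^n$ on the right-hand side. Beyond that, no new ideas are needed; everything reduces to evaluating $T_n$ via Lemma~\ref{lem.v25emwx} and a routine rearrangement of powers of $-1$ and $2$.
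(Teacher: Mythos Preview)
Your approach is correct and essentially coincides with the paper's own proof: the paper invokes \eqref{main_id2} together with \eqref{eq.xgyt5jf} and \eqref{eq.r1jl3d5}, but since \eqref{main_id2} is nothing but \eqref{main_id1} with $x$ replaced by $T_m(x)$, applying \eqref{main_id1} directly at $x=\sqrt{5}F_{pm}/2$ or $x=L_{pm}/2$ (as you do) amounts to the same thing. The only slip is the sign in your displayed rewriting $(-2)^k(1\mp x)^k=(-1)^k(2\pm\sqrt{5}F_{pm})^k$, which should keep $\mp$ on the right; as you note, careful sign bookkeeping fixes this.
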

\begin{proof}
Combine \eqref{main_id2} with \eqref{eq.xgyt5jf} and \eqref{eq.r1jl3d5}.
\end{proof}

Some particular cases of Theorems 3 and 4 stated in the next Example.
\begin{example}\label{ex.lhwdd75} We have
\begin{gather*}
\sum_{k=0}^n \frac{n}{n + k}\binom{n + k}{n - k} \alpha^{-3k} = \sum_{k=0}^n (-1)^{k+1} \frac{n}{n + k} \binom{n + k}{n - k} \alpha^{3k} 
= \frac{\sqrt{5}F_{n}}{2}, \quad \text{\rm $n$ odd},\\
\sum_{k=0}^n \frac{n}{n + k}\binom{n + k}{n - k} 4^k \alpha^{-k} = \sum_{k=0}^n (-1)^{k+1} \frac{n}{n + k} \binom{n + k}{n - k} 4^k \alpha^{k} 
= \frac{\sqrt{5}F_{3n}}{2}, \quad \text{\rm $n$ odd},
\end{gather*}
\begin{gather*}
\sum_{k=0}^n \frac{n}{n + k}\binom{n + k}{n - k} = \frac{L_{2n}}{2},\qquad
\sum_{k=0}^n (-1)^{n-k} \frac{n}{n + k}\binom{n + k}{n - k} 5^k = \frac{L_{2n}}{2},\\
\sum_{k=0}^n \frac{n}{n + k}\binom{n + k}{n - k} 5^k = \frac{L_{4n}}{2},\qquad
\sum_{k=0}^n (-1)^{n-k} \frac n{n + k}\binom{n + k}{n - k} 9^k = \frac{L_{4n}}{2}.
\end{gather*}
\end{example}
\begin{lemma}
If $p$ is an integer, then
\begin{gather}
T_n \Big( {\frac{{ \sqrt 5F_p }}{L_p }} \Big) = \cosh \left( {n\arctanh\Big( {\frac{2}{ \sqrt 5F_p }}\Big)} \right),\quad\text{\rm $p$ odd},\label{eq.qefdypb}\\
T_n \Big( {\frac{L_p}{ \sqrt 5F_p }} \Big) = \cosh \left( {n\arctanh\Big( {\frac{2}{{L_p }}} \Big)} \right),\quad\text{\rm $p$ even, $p\ne 0$}.\label{eq.y8mmtsi}
\end{gather}
\end{lemma}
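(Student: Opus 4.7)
My approach is to substitute the two stated values of $x$ directly into the Binet-like formula~\eqref{eq.lgfu65w} for $T_n(x)$, exactly as was done to prove the previous Lemma~\ref{lem.v25emwx}. The essential algebraic input is the identity
\[
L_p^2 - 5F_p^2 = 4(-1)^p,
\]
which controls the sign and value of $x^2-1$ in each case. The remaining task is to recognize the resulting closed form as a hyperbolic cosine with argument $n\arctanh(\cdot)$.

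For \eqref{eq.qefdypb}, take $x = \sqrt{5}F_p/L_p$ with $p$ odd. Then $x^2-1 = (5F_p^2 - L_p^2)/L_p^2 = 4/L_p^2$, so $\sqrt{x^2-1} = 2/L_p$, and \eqref{eq.lgfu65w} gives
\[
T_n\!\left(\frac{\sqrt 5\,F_p}{L_p}\right) = \frac{(\sqrt 5\,F_p + 2)^n + (\sqrt 5\,F_p - 2)^n}{2\,L_p^n}.
\]
The factor $L_p^n$ in the denominator is exactly what is needed: since $(\sqrt 5 F_p+2)(\sqrt 5 F_p-2) = 5F_p^2-4 = L_p^2$ for $p$ odd, one has $L_p^n = [(\sqrt 5 F_p+2)(\sqrt 5 F_p-2)]^{n/2}$. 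Distributing this across the two summands rewrites the right-hand side as $\tfrac{1}{2}(r^{n/2}+r^{-n/2})$ with $r = (\sqrt 5 F_p+2)/(\sqrt 5 F_p-2)$. Since $\tfrac{1}{2}\ln r = \arctanh(2/(\sqrt 5 F_p))$, this is precisely $\cosh\!\left(n\arctanh(2/(\sqrt 5 F_p))\right)$.

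The companion identity \eqref{eq.y8mmtsi} follows by the same calculation with the roles of $L_p$ and $\sqrt 5\,F_p$ interchanged: for $p$ even one has $x^2-1 = (L_p^2-5F_p^2)/(5F_p^2) = 4/(5F_p^2)$, so $\sqrt{x^2-1} = 2/(\sqrt 5\,F_p)$, and $(L_p+2)(L_p-2) = L_p^2-4 = 5F_p^2$ makes the analogous factorization work. The restriction $p\neq 0$ enters precisely to avoid the singular denominator $F_0=0$.

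The main obstacle is not conceptual but bookkeeping. One must check that $x>1$ in each case so that the real square root in \eqref{eq.lgfu65w} and the branch of $\arctanh$ are unambiguous; using $\sqrt 5 F_p = \alpha^p - \beta^p$ and $L_p = \alpha^p + \beta^p$ this is immediate for $p>0$. For $p<0$ one applies $F_{-p}=(-1)^{p-1}F_p$ and $L_{-p}=(-1)^p L_p$; the ratios $\sqrt 5 F_p/L_p$ and $L_p/(\sqrt 5 F_p)$ are invariant under $p\mapsto -p$ (possibly up to an inconsequential overall sign that is absorbed by the evenness of $\cosh$), so no new case analysis is required.
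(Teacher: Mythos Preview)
Your argument is correct and follows essentially the same route as the paper: substitute $x=\sqrt5\,F_p/L_p$ (respectively $x=L_p/(\sqrt5\,F_p)$) into the Binet-like formula~\eqref{eq.lgfu65w}, simplify $x^2-1$ via $L_p^2-5F_p^2=4(-1)^p$, and then identify the resulting expression with $\cosh(n\arctanh(\cdot))$. The only cosmetic difference is that the paper packages the final identification as the ready-made identity $(x-y)^n+(x+y)^n=2(\sqrt{x^2-y^2})^n\cosh\!\big(n\arctanh(y/x)\big)$, whereas you derive this inline by writing $L_p^n=[(\sqrt5\,F_p+2)(\sqrt5\,F_p-2)]^{n/2}$ and forming the ratio $r$.
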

\begin{proof}
Setting $x=\sqrt 5F_p/L_p$ in~\eqref{eq.lgfu65w} and making use of $5F_p^2 - 4(-1)^{p + 1}=L_p^2$ with $p$ odd produces
\begin{equation}\label{eq.h51c7s8}
T_n \Big( {\frac{ \sqrt 5F_p }{L_p}} \Big) = \frac{{(\sqrt 5F_p   - 2)^n  + (\sqrt 5F_p + 2)^n }}{2L_p^n }
\end{equation}
from which~\eqref{eq.qefdypb} follows upon using the identity
\begin{equation*}
(x - y)^n  + (x + y)^n  = 2\big( {\sqrt {x^2  - y^2 } } \big)^n \cosh \left( {n\arctanh\left( {\frac{y}{x}} \right)} \right).
\end{equation*}
\end{proof}

For low values of $p$, it is easier to use \eqref{eq.h51c7s8} directly for evaluation. 
Thus, at $p=1$ we recover \eqref{eq.l6oienp} while $p=3$ gives \eqref{eq.a8aqw1m}.
On account  of \eqref{eq.r1j2cpo}, \eqref{eq.y8mmtsi} also implies
\begin{equation}\label{eq.wry7jin}
T_n \Big ( {\frac{{L_{2p} }}{\sqrt 5F_{2p} }} \Big) = \frac{{(5F_p^2 )^n + (L_p^2 )^n }}{{2(\sqrt 5F_{2p}  )^n }},
\end{equation}
for every non-zero integer $p$. We also note that
\begin{equation*}
\sum_{k = 0}^{2n} \Big (\frac{4}{5}\Big )^k \binom{2n + k}{2n-k} \frac {4^k - (-1)^k L_{2p}^{2k}}{(2n + k){F_{2p}^{2k}}} = 0,\quad p\ne0,
\end{equation*}
and
\begin{equation*}
\sum_{k = 0}^{2n} \Big (\frac{4}{5}\Big )^k  \binom{2n + k}{2n-k} \frac {4^k + (-1)^k }{2n +k}\Big(\frac{L_{2p}}{F_{2p}}\Big)^{2k}  
= \frac{625^k F_p^{8n}+L_p^{8n}}{2n \,25^n F_{2p}^{4n}}, \quad p\ne0.
\end{equation*}
\begin{theorem}
If $p$ is a non-zero integer and $n$ is a positive integer, then 
\begin{equation*}
\sum_{k = 0}^n \Big ( \frac{16}{5}\Big )^k \frac {n}{n + k} \binom{n + k}{n - k} F_{2p}^{2(n-k)} 
= \frac{25^n F_p^{4n} + L_p^{4n}}{2\cdot5^n}.
\end{equation*}
\begin{equation*}
\sum_{k = 0}^n (-1)^{n-k} \Big ( \frac{4}{5}\Big )^k \frac {n}{n + k} \binom{n + k}{n - k}\Big ( \frac{F_{2p}}{L_{2p}}\Big )^{2(n-k)} 
= \frac{25^n F_p^{4n} + L_p^{4n}}{2\cdot 5^n L_{2p}^{2n}}.
\end{equation*}
In particular,
\begin{gather*}
\sum_{k = 0}^n \Big ( \frac{16}{5}\Big )^k \frac {n}{n + k} \binom{n + k}{n - k} = \frac{25^n + 1}{2\cdot 5^n},\\
\sum_{k = 0}^n (-1)^{n-k} \Big ( \frac{36}{5}\Big )^k \frac {n}{n + k} \binom{n + k}{n - k} = \frac{25^n + 1}{2\cdot 5^n}.
\end{gather*}
\end{theorem}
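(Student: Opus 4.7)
The plan is to obtain both displayed identities as specializations of \eqref{main_id3} and \eqref{main_id4} at the point $x=L_{2p}/(\sqrt{5}\,F_{2p})$. For this value of $x$, the identity $L_{2p}^{2}-5F_{2p}^{2}=4$ (the standard $L_{n}^{2}-5F_{n}^{2}=4(-1)^{n}$ specialized to the even subscript $2p$) gives $x^{2}-1=4/(5F_{2p}^{2})$ and $x^{2}=L_{2p}^{2}/(5F_{2p}^{2})$. On the right-hand side, \eqref{eq.wry7jin} with $n$ replaced by $2n$ evaluates to
$$T_{2n}\!\left(\frac{L_{2p}}{\sqrt{5}\,F_{2p}}\right)=\frac{(5F_{p}^{2})^{2n}+(L_{p}^{2})^{2n}}{2(\sqrt{5}\,F_{2p})^{2n}}=\frac{25^{n}F_{p}^{4n}+L_{p}^{4n}}{2\cdot 5^{n}F_{2p}^{2n}},$$
which is exactly the target closed form up to a factor of $F_{2p}^{2n}$.

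For the first identity I will substitute this data into \eqref{main_id3}: the summand's $4^{k}(x^{2}-1)^{k}$ becomes $(16/(5F_{2p}^{2}))^{k}$, and after multiplying both sides by $F_{2p}^{2n}$ and pulling the factor $F_{2p}^{-2k}$ inside the summand, the power $F_{2p}^{2(n-k)}$ appears, producing the first claimed identity directly.

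For the second identity I will substitute the same $x$ into \eqref{main_id4}: now $(-4)^{k}x^{2k}$ becomes $(-4L_{2p}^{2}/(5F_{2p}^{2}))^{k}$, and the right-hand side picks up an extra $(-1)^{n}$. Multiplying both sides by $(F_{2p}/L_{2p})^{2n}$ and using the algebraic rearrangement
$$\left(-\frac{4L_{2p}^{2}}{5F_{2p}^{2}}\right)^{k}\!\left(\frac{F_{2p}}{L_{2p}}\right)^{2n}=(-1)^{n}(-1)^{n-k}\left(\frac{4}{5}\right)^{k}\!\left(\frac{F_{2p}}{L_{2p}}\right)^{2(n-k)}$$
(the leading $(-1)^{n}$ then cancels the one on the right-hand side) yields the second claimed identity.

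The two particular cases come from setting $p=1$, so that $F_{2}=1$ and $L_{2}=3$: in the first sum $F_{2p}^{2(n-k)}=1$, and in the second the factor $(L_{2p}/F_{2p})^{2(n-k)}=9^{n-k}$ can be absorbed into the coefficient, turning $(4/5)^{k}$ into $(36/5)^{k}$ and combining with the $L_{2p}^{2n}=9^{n}$ on the right. The only step that calls for care is the sign and exponent bookkeeping in the derivation of the second identity; the rest is direct substitution into results already established.
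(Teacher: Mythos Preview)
Your proof is correct and follows precisely the paper's own approach: substitute $x=L_{2p}/(\sqrt{5}\,F_{2p})$ into \eqref{main_id3} and \eqref{main_id4} and evaluate the right-hand side via \eqref{eq.wry7jin}. You have simply spelled out the algebraic bookkeeping (the use of $L_{2p}^{2}-5F_{2p}^{2}=4$ to compute $x^{2}-1$, the multiplication by $F_{2p}^{2n}$ or $(F_{2p}/L_{2p})^{2n}$, and the sign manipulation) that the paper's one-line proof leaves implicit.
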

\begin{proof}
Combine \eqref{eq.wry7jin} with \eqref{main_id3} and \eqref{main_id4}, respectively, while setting $x=L_{2p}/(\sqrt{5}F_{2p})$.
\end{proof}
\begin{remark}
We note the following  relations:
\begin{gather*}
\sum_{k = 0}^n \Big ( \frac{5}{16}\Big )^{n-k} \frac {n}{n + k} \binom{n + k}{n - k} F_{2p}^{2(n-k)}
=  \sum_{k=0}^n \binom {2n}{2k} \frac{L_{2p}^{2(n-k)}}{4^{2n-k}},\\
\sum_{k = 0}^n (-1)^{n-k} \Big ( \frac{5}{4}\Big )^{n-k} \frac {n}{n + k} \binom{n + k}{n - k} \Big (\frac{F_{2p}}{L_{2p}}\Big )^{2(n-k)}
= \sum_{k=0}^n \binom {2n}{2k} \frac{4^{k-n}}{L^{2k}_{2p}}.
\end{gather*}
\end{remark}
\begin{theorem}\label{thm.fepmhxp}
If $p$ is an odd integer and $n$ is a positive integer, then
\begin{gather*}
\sum_{k = 0}^n  (- 4)^k \frac {n}{n + k}\binom{n + k}{n - k} \frac{L_{pk + t}}{L_p^{k}} = \cosh\!\left( {n\arctanh\left( {\frac{2}{\sqrt 5 F_p}} \right)} \right) \cdot 
 \begin{cases}
  L_t,&\text{\rm $n$ even};  \\ 
  - \sqrt 5 F_t ,&\text{\rm $n$ odd};   
 \end{cases}\\ 
\sum_{k = 0}^n (- 4)^k \frac {n}{n + k}\binom{n + k}{n - k} \frac{F_{pk + t}}{L_p^{k}} = \cosh\! \left( {n\arctanh\left( {\frac{2}{\sqrt 5 F_p  }} \right)} \right) \cdot  \begin{cases}
 F_t,&\text{\rm $n$ even};  \\ 
  - L_t /\sqrt 5,&\text{\rm $n$ odd}. \\ 
 \end{cases} 
\end{gather*}
\end{theorem}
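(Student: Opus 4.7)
The plan is to specialize identity \eqref{main_id1} at $x = \sqrt 5 F_p/L_p$ (with $p$ odd) and then form two linear combinations of the resulting pair of identities, weighted by powers of $\alpha$ and $\beta$, so that the summands combine via Binet into $L_{pk+t}/L_p^k$ and $F_{pk+t}/L_p^k$ respectively.

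The key observation is that, directly from Binet, $L_p + \sqrt 5 F_p = 2\alpha^p$ and $L_p - \sqrt 5 F_p = 2\beta^p$. Consequently, at $x = \sqrt 5 F_p/L_p$ we have $1 - x = 2\beta^p/L_p$ and $1 + x = 2\alpha^p/L_p$. Substituting into \eqref{main_id1} under its two sign choices and absorbing the $2^k$ factor into $(-2)^k$ yields the pair
\begin{equation*}
\sum_{k=0}^n (-4)^k \frac{n}{n+k}\binom{n+k}{n-k}\frac{\beta^{pk}}{L_p^k} = T_n\Big(\frac{\sqrt 5 F_p}{L_p}\Big),
\end{equation*}
\begin{equation*}
\sum_{k=0}^n (-4)^k \frac{n}{n+k}\binom{n+k}{n-k}\frac{\alpha^{pk}}{L_p^k} = (-1)^n T_n\Big(\frac{\sqrt 5 F_p}{L_p}\Big).
\end{equation*}

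Next I would multiply the first identity by $\beta^t$ and the second by $\alpha^t$, so that the summands turn into $\beta^{pk+t}/L_p^k$ and $\alpha^{pk+t}/L_p^k$. Adding the two modified identities and using $L_m = \alpha^m + \beta^m$ yields the first sum in the statement; subtracting them and dividing by $\sqrt 5$, together with $\sqrt 5 F_m = \alpha^m - \beta^m$, yields the second. On the right-hand side the residual coefficient in the additive case is $(-1)^n \alpha^t + \beta^t$, which equals $L_t$ for $n$ even and $-\sqrt 5 F_t$ for $n$ odd; in the subtractive case the residual is $\big((-1)^n \alpha^t - \beta^t\big)/\sqrt 5$, equal to $F_t$ for $n$ even and $-L_t/\sqrt 5$ for $n$ odd. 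Finally I would invoke \eqref{eq.qefdypb} to replace $T_n(\sqrt 5 F_p/L_p)$ by $\cosh\!\big(n\arctanh(2/(\sqrt 5 F_p))\big)$, matching the stated right-hand sides exactly.

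No step presents a genuine obstacle; the only delicate piece is the sign bookkeeping across the two parities of $n$, and that is precisely what produces the $L_t$/$F_t$ dichotomy in the theorem.
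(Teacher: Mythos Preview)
Your proof is correct and follows essentially the same route as the paper: specialize \eqref{main_id1} at $x=\sqrt5 F_p/L_p$, combine the resulting $\alpha$- and $\beta$-weighted identities via Binet, and then invoke \eqref{eq.qefdypb}. The only cosmetic difference is that the paper phrases it as ``take the upper signs'' (obtaining the $\beta^{pk}$ identity, with the $\alpha^{pk}$ companion arising by the $\alpha\leftrightarrow\beta$ conjugation), whereas you obtain the two companion identities by using both sign choices in \eqref{main_id1}; these produce the same pair of equations, so the arguments are equivalent.
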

\begin{proof} Set $x=F_p\sqrt 5/L_p$ in~\eqref{main_id1}, taking the upper signs, and then  use \eqref{eq.qefdypb}.
\end{proof}
\begin{corollary}
If $p$ is an odd integer and $n$ is a positive integer, then
\begin{gather*}
\sum_{k = 0}^n (- 4)^k \frac {n}{n + k}\binom{n + k}{n - k} \frac{L_{pk}}{L_p^{k}} = 0,\quad\text{\rm $n$ odd}, \\
\sum_{k = 0}^n (- 4)^k \frac {n}{n + k}\binom{n + k}{n - k} \frac{F_{pk}}{L_p^{k}} = 0,\quad\text{\rm $n$ even}.
\end{gather*}
\end{corollary}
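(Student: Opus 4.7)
The plan is to obtain both identities as immediate specializations of Theorem~\ref{thm.fepmhxp} by choosing the shift parameter $t=0$. The key observation is that the right-hand sides in that theorem split into two branches according to the parity of $n$, and each branch contains a factor that vanishes at $t=0$ in exactly one of those parities, because $F_0 = 0$.

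More concretely, I would first consider the first identity of Theorem~\ref{thm.fepmhxp}. Substituting $t=0$ turns the right-hand side into
\[
\cosh\!\left( n\arctanh\!\left(\frac{2}{\sqrt 5 F_p}\right)\right)\cdot
\begin{cases} L_0, & n\text{ even}; \\ -\sqrt 5\, F_0, & n\text{ odd}. \end{cases}
\]
For odd $n$ the second branch applies, and the factor $-\sqrt 5\, F_0$ is zero, which yields the first assertion. Similarly, for the second identity of Theorem~\ref{thm.fepmhxp}, setting $t=0$ gives
\[
\cosh\!\left( n\arctanh\!\left(\frac{2}{\sqrt 5 F_p}\right)\right)\cdot
\begin{cases} F_0, & n\text{ even}; \\ -L_0/\sqrt 5, & n\text{ odd}, \end{cases}
\]
so for even $n$ the factor $F_0 = 0$ kills the right-hand side, producing the second assertion.

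There is essentially no obstacle: once Theorem~\ref{thm.fepmhxp} has been established, the corollary is a direct consequence of evaluating the parity-dependent right-hand sides at $t=0$. The only thing to check is that, on the left-hand side, the substitution $t=0$ indeed produces the stated sums, which it does since $L_{pk+0} = L_{pk}$ and $F_{pk+0} = F_{pk}$.
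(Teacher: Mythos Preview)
Your proposal is correct and matches the paper's approach: the corollary is stated immediately after Theorem~\ref{thm.fepmhxp} with no separate proof, since it is precisely the specialization $t=0$ of that theorem, using $F_0=0$ to make the relevant parity branch vanish.
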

\begin{corollary}
If $n$ is a positive integer, then
\begin{gather*}
\sum_{k = 0}^n (- 4)^k \frac {n}{n + k}\binom{n + k}{n - k} L_{k + t} = \begin{cases}
 \frac12 L_tL_{3n},&\text{\rm $n$ even};  \\ 
  - \frac52F_tF_{3n},&\text{\rm $n$ odd};  \\ 
 \end{cases} \\
\sum_{k = 0}^n (-4)^k \frac {n}{n + k}\binom{n + k}{n - k}F_{k + t} = \begin{cases}
 \frac12 F_tL_{3n},&\text{\rm $n$ even};  \\ 
  - \frac12 L_tF_{3n},&\text{\rm $n$ odd}. 
 \end{cases} 
\end{gather*}
\end{corollary}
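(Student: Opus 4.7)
The plan is to specialize Theorem~\ref{thm.fepmhxp} at $p=1$. Since $L_1=F_1=1$, the denominator $L_p^k$ disappears and the two left-hand sides of Theorem~\ref{thm.fepmhxp} become exactly the two left-hand sides of the corollary. The only work that remains is to evaluate the factor $\cosh\!\bigl(n\arctanh(2/\sqrt 5)\bigr)$ in closed form and split it according to the parity of $n$.

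For that evaluation I would first simplify $\arctanh(2/\sqrt 5)$ using $\arctanh y=\tfrac12\ln\frac{1+y}{1-y}$. Because $(\sqrt 5+2)(\sqrt 5-2)=1$, this gives $\arctanh(2/\sqrt 5)=\ln(\sqrt 5+2)$. The key observation is that $\alpha^3=2+\sqrt 5$ (immediate from $\alpha=(1+\sqrt 5)/2$), so $\arctanh(2/\sqrt 5)=3\ln\alpha$. Hence
\[
\cosh\!\bigl(n\arctanh(2/\sqrt 5)\bigr)=\tfrac12\bigl(\alpha^{3n}+\alpha^{-3n}\bigr)=\tfrac12\bigl(\alpha^{3n}+(-1)^n\beta^{3n}\bigr),
\]
using $\alpha^{-1}=-\beta$. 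By the Binet formulas \eqref{binet}, this equals $L_{3n}/2$ when $n$ is even and $\sqrt 5\, F_{3n}/2$ when $n$ is odd.

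It then remains to multiply these two values by the four parity-dependent factors $L_t,\;-\sqrt 5\,F_t,\;F_t,\;-L_t/\sqrt 5$ appearing on the right of Theorem~\ref{thm.fepmhxp}. A direct check gives $\tfrac12 L_tL_{3n}$, $-\tfrac52 F_tF_{3n}$, $\tfrac12 F_tL_{3n}$, and $-\tfrac12 L_tF_{3n}$, matching the four cases of the corollary exactly. There is no real obstacle here: once the identification $\sqrt 5+2=\alpha^3$ is noticed, everything is routine parity bookkeeping and Binet arithmetic.
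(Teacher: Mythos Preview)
Your proof is correct and follows the paper's intended route: the corollary is stated without proof as the specialization $p=1$ of Theorem~\ref{thm.fepmhxp}, and you carry out exactly that specialization. The only cosmetic difference is that where you evaluate $\cosh\!\bigl(n\arctanh(2/\sqrt5)\bigr)$ directly via $\arctanh y=\tfrac12\ln\frac{1+y}{1-y}$ and the identification $\sqrt5+2=\alpha^3$, the paper has already packaged this computation as \eqref{eq.l6oienp} (recall the remark after \eqref{eq.h51c7s8} that the $p=1$ case of \eqref{eq.qefdypb} recovers $T_n(\sqrt5)$); invoking that formula would shorten your argument to one line.
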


The corresponding identities to Theorem \ref{thm.fepmhxp} for $p$ even are stated next.
\begin{theorem} If $p$ is a non-zero even integer,  $n$ is a positive integer and $t$ is an integer, then
\begin{align*}
\sum_{k = 0}^{\lfloor{n/2}\rfloor}&  \Big(\frac{16}{5F_p^2}\Big)^k
\bigg( \binom{n+2k}{n-2k}\frac{L_{2pk+t}}{n+2k}-\frac{4}{F_p}\binom{n+(2k+1)}{n-(2k+1)}\frac{F_{p(2k+1)+t}}{n+2k+1}\bigg) \\
&=\frac{1}{n}\cosh\left( n\arctanh\Big( \frac{2}{L_p}\Big) \right)\cdot
	\begin{cases}
	L_t,&\text{\rm$n$ even};  \\ 
	- \sqrt5 F_t,&\text{\rm$n$ odd};
	\end{cases}\\
\sum_{k = 0}^{\lfloor{n/2}\rfloor}&   \Big(\frac{16}{5F^2_p}\Big)^k
\left( \binom{n+2k}{n-2k}\frac{F_{2pk+t}}{n+2k}-\frac{4}{5F_p}\binom{n+(2k+1)}{n-(2k+1)}\frac{L_{p(2k+1)+t}}{n+2k+1}\right) \\
&=\frac{1}{\sqrt5\, n}\cosh\left( n\arctanh\Big( \frac{2}{L_p}\Big) \right)\cdot
\begin{cases}
	\sqrt5 F_t,&\text{\rm$n$ even};  \\ 
	- L_t,&\text{\rm$n$ odd}.  
	\end{cases}
\end{align*}
\end{theorem}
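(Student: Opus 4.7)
The approach mirrors the proof of Theorem~\ref{thm.fepmhxp}, using the natural even-$p$ argument $x=L_p/(\sqrt5 F_p)$ furnished by~\eqref{eq.y8mmtsi}. Since $p$ is even, Binet's formulas give $\sqrt5 F_p - L_p = -2\beta^p$ and $\sqrt5 F_p + L_p = 2\alpha^p$, so substituting $x=L_p/(\sqrt5 F_p)$ into~\eqref{main_id1} with the upper and lower signs in turn produces
\begin{align*}
\sum_{k=0}^n 4^k \frac{n}{n+k}\binom{n+k}{n-k}\frac{\beta^{pk}}{(\sqrt5 F_p)^k} &= T_n\bigl(L_p/(\sqrt5 F_p)\bigr),\\
\sum_{k=0}^n (-4)^k \frac{n}{n+k}\binom{n+k}{n-k}\frac{\alpha^{pk}}{(\sqrt5 F_p)^k} &= (-1)^n T_n\bigl(L_p/(\sqrt5 F_p)\bigr).
\end{align*}

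Next I would multiply the first identity by $\beta^t$ and the second by $\alpha^t$ and add them. The generic term becomes $4^k(\beta^{pk+t}+(-1)^k\alpha^{pk+t})/(\sqrt5 F_p)^k$, which equals $4^k L_{pk+t}/(\sqrt5 F_p)^k$ for even $k$ and $-4^k\sqrt5 F_{pk+t}/(\sqrt5 F_p)^k$ for odd $k$. Writing $k=2j$ or $k=2j+1$ accordingly and using $(\sqrt5 F_p)^{2j}=(5F_p^2)^j$ and $(\sqrt5 F_p)^{2j+1}=\sqrt5 F_p\,(5F_p^2)^j$, every $\sqrt5$ factor rationalizes; the even-$k$ terms acquire coefficient $(16/(5F_p^2))^j$ while the odd-$k$ terms pick up the same power together with an additional $-4/F_p$. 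This reproduces exactly $n$ times the left-hand side of the first asserted identity. On the right, applying~\eqref{eq.y8mmtsi} replaces $T_n(L_p/(\sqrt5 F_p))$ by $\cosh(n\arctanh(2/L_p))$, and evaluating $\beta^t+(-1)^n\alpha^t$ gives $L_t$ when $n$ is even and $-\sqrt5 F_t$ when $n$ is odd. Dividing by $n$ completes the first identity.

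The second identity follows by the same procedure but with the two scaled equations subtracted rather than added. The generic term $4^k(\beta^{pk+t}-(-1)^k\alpha^{pk+t})/(\sqrt5 F_p)^k$ now equals $-4^k\sqrt5 F_{pk+t}/(\sqrt5 F_p)^k$ for even $k$ and $4^k L_{pk+t}/(\sqrt5 F_p)^k$ for odd $k$. After the same parity split and rationalization, the resulting sum matches $-\sqrt5\,n$ times the left-hand side of the second identity, while the right-hand side comes from $\beta^t-(-1)^n\alpha^t$ specialized by parity of $n$ (yielding $-\sqrt5 F_t$ for $n$ even and $L_t$ for $n$ odd). The only real obstacle throughout is careful bookkeeping of the signs and $\sqrt5$ denominators across the parity split — in particular, verifying that the odd-$k$ contributions whose index $2j+1$ exceeds $n$ vanish because $\binom{n+2j+1}{n-2j-1}=0$ there — but conceptually this is nothing more than the even-$p$ companion of Theorem~\ref{thm.fepmhxp}.
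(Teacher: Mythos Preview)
Your proof is correct and follows exactly the route the paper implies (but does not write out): the even-$p$ companion of Theorem~\ref{thm.fepmhxp}, obtained by substituting $x=L_p/(\sqrt5 F_p)$ into~\eqref{main_id1} and invoking~\eqref{eq.y8mmtsi}. The extra parity split in $k$, forced by the $(\sqrt5 F_p)^k$ denominator, is precisely why the even-$p$ statement has the two-term shape it does, and you track the signs and $\sqrt5$ factors accurately throughout.
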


The non-alternating versions of the identities in Theorem~\ref{thm.fepmhxp} will be derived from~\eqref{eq.qefdypb} and~\eqref{main_id1} by considering the bottom sign combinations. This is left to the interested reader.
\begin{example}\label{eq.e5fgbi7}
Noting that 
\begin{equation}\label{help_ch_id1}
\sum_{k=0}^n (-2)^k \frac n{n + k}\binom{n + k}{n - k} \big ( (1+x)^k \mp (1-x)^k \big )= \big((-1)^n \mp 1\big)T_n(x),
\end{equation}
with $x=\sqrt{5}/2$ and \eqref{eq.a8aqw1m}  
we get
\begin{equation}\label{eq.hpzpjoo}
\sum_{k=1}^n (-1)^{k-1} \frac n{n + k}\binom{n + k}{n - k} F_{3k} =\frac{1-(-1)^n}{2} F_n,
\end{equation}
as well as
\begin{equation}\label{eq.rsep1ey}
\sum_{k=0}^n (-1)^{k} \frac n{n + k}\binom{n + k}{n - k} L_{3k} = 
\frac{1+(-1)^n}{2}L_n. 
\end{equation}
\end{example}

As we will see identities \eqref{eq.hpzpjoo} and \eqref{eq.rsep1ey} are special cases of Theorem~\ref{thm.iq16bty} below.
\begin{example}
We derive the inverse relation of \eqref{eq.hpzpjoo} and \eqref{eq.rsep1ey}. Starting with \eqref{help_ch_id1}  
we set $x=-\sqrt{5}$, and make use of $T_n(-\sqrt{5})=(-1)^nT_n(-\sqrt{5})$ together with  \eqref{eq.l6oienp} to derive the following identities: 
\begin{gather*}
\sum_{k=1}^n (-4)^{k} \frac n{n + k}\binom{n + k}{n - k} F_{k} =\frac{(-1)^n-1}{2}F_{3n},\\
\sum_{k=0}^n (-4)^{k} \frac n{n + k}\binom{n + k}{n - k} L_{k} = 
\frac{1+(-1)^n}{2} L_{3n}.
\end{gather*}
\end{example}
\begin{example}
It is obvious that from \eqref{help_ch_id1}
more appealing relations can be derived. We give just four examples:
\begin{align*}
\sum_{k = 0}^{n} (-4)^k \frac{n}{n + k} \binom{n+k}{n-k}\big(L_{2k}\pm (-1)^k L_{k}\big) 
&= \big((-1)^n\pm 1\big) \big(T_n(\alpha^3)+T_n(\beta^3)\big) \\
&=\big((-1)^n\pm 1\big) \sum_{k = 0}^{\lfloor{n/2}\rfloor}\binom{n}{2k}4^kL_{3(n-k)},\\
\sum_{k = 0}^{n} (-4)^k \frac{n}{n + k} \binom{n+k}{n-k}\big(F_{2k}\pm(-1)^kF_{k}\big)
&= \frac{(-1)^n\pm1}{\sqrt{5}} \big(T_n(\alpha^3)-T_n(\beta^3)\big) \\
&= \sum_{k = 0}^{\lfloor{n/2}\rfloor}\binom{n}{2k}4^k F_{3(n-k)},\\
\sum_{k = 0}^{n} \Big(\!-\frac{4}{3}\Big)^k \frac{n}{n + k} \binom{n+k}{n-k} L_{2k} &= \big((-1)^n +1\big)T_n\Big(\frac{\sqrt5}3\Big),\\
\sum_{k = 0}^{n} \Big(\!-\frac{4}{3}\Big)^k \frac{n}{n + k} \binom{n+k}{n-k} F_{2k} &= \frac{(-1)^n -1}{\sqrt{5}}\,T_n\Big(\frac{\sqrt5}3\Big).
\end{align*}
\end{example}
\begin{theorem}\label{thm.iq16bty}
If $n$ is a positive integer and $t$ is any integer, then
\begin{gather*}
\sum_{k = 0}^n {( - 1)^{n-k} \frac{n}{{n + k}}\binom{n + k}{n - k}L_{3k + t} } = \begin{cases}
 \frac52 F_t F_n,&\text{\rm $n$ odd};  \\ 
 \frac12 L_t L_n,&\text{\rm $n$ even};  \\ 
 \end{cases} \\
\sum_{k = 0}^n {( - 1)^{n-k} \frac{n}{{n + k}}\binom{n + k}{n - k}F_{3k + t} } =  \begin{cases}
 \frac12 L_t F_n,&\text{\rm $n$ odd};  \\ 
 \frac12 F_t L_n,&\text{\rm $n$ even}.  \\ 
 \end{cases}
\end{gather*}
\end{theorem}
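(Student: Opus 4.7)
The plan is to exploit the Binet forms together with identity \eqref{main_id1} at the specific argument $x=\sqrt 5/2$, generalizing the $t=0$ computation already carried out in Example~\ref{eq.e5fgbi7}.

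The key algebraic observation is that $\alpha^3 = 2+\sqrt 5$ and $\beta^3 = 2-\sqrt 5$, so that
\[
1+\tfrac{\sqrt 5}{2} = \tfrac{\alpha^3}{2}, \qquad 1-\tfrac{\sqrt 5}{2} = \tfrac{\beta^3}{2}.
\]
This is precisely the coincidence that makes the factor $(-2)^k$ in \eqref{main_id1} collapse: taking $x=\sqrt 5/2$ and each choice of sign gives
\begin{align*}
\sum_{k=0}^n (-1)^k \frac{n}{n+k}\binom{n+k}{n-k}\beta^{3k} &= T_n\!\bigl(\tfrac{\sqrt 5}{2}\bigr),\\
\sum_{k=0}^n (-1)^k \frac{n}{n+k}\binom{n+k}{n-k}\alpha^{3k} &= (-1)^n T_n\!\bigl(\tfrac{\sqrt 5}{2}\bigr).
\end{align*}

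Next, I would multiply the first identity by $\beta^t$ and the second by $\alpha^t$ and then add, respectively subtract and divide by $\sqrt 5$, invoking the Binet formulas $L_{3k+t} = \alpha^{3k+t}+\beta^{3k+t}$ and $\sqrt 5\,F_{3k+t} = \alpha^{3k+t}-\beta^{3k+t}$. The $T_n(\sqrt 5/2)$ factor is then evaluated with \eqref{eq.a8aqw1m}: it equals $L_n/2$ when $n$ is even and $\sqrt 5\,F_n/2$ when $n$ is odd. The parity of $n$ also determines the residual factor $(-1)^n\alpha^t\pm\beta^t$: for $n$ even these are $L_t$ and $\sqrt 5 F_t$; for $n$ odd they are $-\sqrt 5 F_t$ and $-L_t$. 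Finally, absorbing the overall $(-1)^n$ converts $(-1)^k$ into $(-1)^{n-k}$, which is the sign appearing in the statement.

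The only genuine obstacle is cosmetic bookkeeping: keeping track of the four parity sub-cases (the parity of $n$ in the exponentiated $T_n$-value and in the recombination of $\alpha^t,\beta^t$) and verifying that the two parity effects combine in the single clean form $\tfrac12 L_t L_n$, $\tfrac52 F_t F_n$, $\tfrac12 F_t L_n$, $\tfrac12 L_t F_n$ as stated. No new identity is needed beyond \eqref{main_id1} and Lemma~\ref{lem.v25emwx}.
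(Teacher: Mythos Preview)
Your proof is correct and follows essentially the same strategy as the paper's: substitute into the master Chebyshev identity, combine via Binet, and split on the parity of $n$.  The one noteworthy difference is the choice of substitution.  The paper works with~\eqref{main_id4} at the two arguments $x=\sqrt{\alpha^3}/2$ and $x=\sqrt{\beta^3}/2$, which requires the ad hoc evaluations of $T_{2n}\bigl(\sqrt{\alpha^3}/2\bigr)$ and $T_{2n}\bigl(\sqrt{\beta^3}/2\bigr)$ stated inside the proof.  You instead work with~\eqref{main_id1} at the single real point $x=\sqrt5/2$, using both sign options; this lands directly on the value $T_n(\sqrt5/2)$ already recorded in~\eqref{eq.a8aqw1m}, so no new Chebyshev evaluation is needed.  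The two routes are equivalent (indeed $2(\sqrt{\alpha^3}/2)^2-1=\sqrt5/2$, so \eqref{main_id4} at $\sqrt{\alpha^3}/2$ is literally \eqref{main_id1} at $\sqrt5/2$), but your version is marginally more self-contained.
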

\begin{proof}
Set $x=\sqrt{\alpha^3}/2$ and $x=\sqrt{\beta^3}/2$, in turn, in~\eqref{main_id4} and use 
\begin{equation*}
T_{2n} \bigg( \frac{{\sqrt {\alpha ^3 } }}{2}\bigg) =  \begin{cases}
\frac{\sqrt5}2 F_n,&\text{\rm $n$  odd};  \\
\frac12 L_n,&\text{\rm $n$ even};  \\ 
\end{cases}\qquad 
T_{2n} \bigg( {\frac{{\sqrt {\beta ^3 } }}{2}} \bigg) =  \begin{cases}
-\frac{\sqrt5}2 F_n,&\text{\rm $n$ odd};  \\ 
\frac12 L_n,&\text{\rm $n$ even}.  \\ 
\end{cases}
\end{equation*}
to obtain
\begin{equation*}
2\sum_{k = 0}^n {( - 1)^{n-k} \frac{n}{{n + k}}\binom{n + k}{n - k}(\alpha ^{3k + t}  + \lambda \beta ^{3k + t} )}= \begin{cases}
 F_n \sqrt 5 (\alpha ^t  - \lambda \beta ^t ) ,& \text{$n$ odd};\\ 
 L_n (\alpha ^t - \lambda \beta ^t ),& \text{$n$ even};\\ 
 \end{cases} 
\end{equation*}
from which the stated results follow upon setting $\lambda=1$ and $\lambda=-1$.
\end{proof}

Note that Examples \ref{eq.e5fgbi7} is the special case ($t=0$) of Theorem~\ref{thm.iq16bty}.
\begin{theorem}
If $n$ is a positive integer and $p$ is a non-zero integer, then
\begin{equation*}
\sum_{k = 0}^n (- 1)^{n-k} \frac{n}{{n + k}} \binom{n + k}{n - k} \left( \frac{{2L_{2p} }}{\sqrt5F_{2p}} \right)^{2k} 
= \frac12\left(\Big( \frac{\sqrt5 F_p }{L_p } \Big)^{2n} + \Big( \frac{{L_p}}{\sqrt5 F_p } \Big)^{2n}\right)\!.
\end{equation*}
\end{theorem}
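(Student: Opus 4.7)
The plan is to recognize that the left-hand side is essentially a specialization of identity~\eqref{main_id4} at the argument $x = L_{2p}/(\sqrt{5}\,F_{2p})$, and that the resulting Chebyshev value has already been evaluated in~\eqref{eq.wry7jin}. So the proof is really a bookkeeping exercise.

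First I would rewrite the summand by pulling apart the $2^{2k} = 4^k$ factor inside $\bigl(2L_{2p}/(\sqrt5F_{2p})\bigr)^{2k}$. Setting $x=L_{2p}/(\sqrt5F_{2p})$, the combined weight becomes
\[
(-1)^{n-k}\,4^k\,x^{2k}\;=\;(-1)^n\,(-4)^k\,x^{2k}.
\]
Factoring $(-1)^n$ out of the sum and applying~\eqref{main_id4}, the left-hand side collapses to $(-1)^n\cdot(-1)^n\,T_{2n}(x)=T_{2n}\!\bigl(L_{2p}/(\sqrt5F_{2p})\bigr)$.

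Next I would invoke~\eqref{eq.wry7jin} with $n$ replaced by $2n$ to obtain
\[
T_{2n}\!\Big(\tfrac{L_{2p}}{\sqrt5F_{2p}}\Big) \;=\; \frac{(5F_p^2)^{2n}+(L_p^2)^{2n}}{2(\sqrt5\,F_{2p})^{2n}}\;=\;\frac{(5F_p^2)^{2n}+L_p^{4n}}{2\cdot 5^n\,F_{2p}^{2n}}.
\]
Finally, using the standard identity $F_{2p}=F_pL_p$, one rewrites $5^n F_{2p}^{2n}=5^nF_p^{2n}L_p^{2n}$ and splits the single fraction into
\[
\frac12\!\left(\frac{(5F_p^2)^{2n}}{5^nF_p^{2n}L_p^{2n}}+\frac{L_p^{4n}}{5^nF_p^{2n}L_p^{2n}}\right)\;=\;\frac12\!\left(\Big(\tfrac{\sqrt5\,F_p}{L_p}\Big)^{2n}+\Big(\tfrac{L_p}{\sqrt5\,F_p}\Big)^{2n}\right)\!,
\]
which is exactly the desired right-hand side. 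The only point demanding care is the sign reconciliation in the first step (matching $(-1)^{n-k}$ against the $(-4)^k=(-1)^k 4^k$ appearing in~\eqref{main_id4}); once this is handled, the rest is a direct substitution and the standard factorization $F_{2p}=F_pL_p$. There is no genuine obstacle beyond this tracking of signs and powers.
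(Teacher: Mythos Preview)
Your proof is correct and follows exactly the paper's approach: substitute $x=L_{2p}/(\sqrt5\,F_{2p})$ into~\eqref{main_id4} and then evaluate with~\eqref{eq.wry7jin}. The only addition you make explicit is the final rewriting via $F_{2p}=F_pL_p$, which the paper's one-line proof leaves implicit but is indeed needed to reach the stated right-hand side.
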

\begin{proof}
Set $ x=L_{2p}/(\sqrt 5 F_{2p})$ in~\eqref{main_id4} and then use~\eqref{eq.wry7jin}.
\end{proof}
%
%
%
\begin{theorem}
If $n$ is a non-negative integer and $p$ and $q$ are non-zero integers, then
\begin{equation*}
\sum_{k = 0}^n ( - 1)^{(p - q)(n-k)}  \frac{n}{{n + k}}\binom{n + k}{n - k}\left( {\frac{{F_{p + q} F_{p - q} }}{{F_p F_q }}} \right)^{2k}  = \frac{{F_p^{4n}  + F_q^{4n} }}{2F_q^{2n} F_p^{2n}}.
\end{equation*}
\end{theorem}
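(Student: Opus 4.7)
The plan is to deduce the identity from~\eqref{main_id4} after a substitution that absorbs the alternating sign. Set $r = F_p/F_q$ and $\epsilon = (-1)^{p-q}$, so that the right-hand side becomes $\tfrac{1}{2}(r^{2n} + r^{-2n})$, which is the form of a $T_{2n}$ value by the Binet-like formula~\eqref{eq.lgfu65w}. Next, Catalan's identity $F_p^2 - F_{p+q}F_{p-q} = (-1)^{p-q}F_q^2$ (an immediate consequence of the Binet formulas~\eqref{binet}) yields
\begin{equation*}
\frac{F_{p+q}F_{p-q}}{F_p F_q} = r - \epsilon r^{-1}.
\end{equation*}

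Writing $(-1)^{(p-q)(n-k)} = \epsilon^{n}\epsilon^{k}$ and factoring $\epsilon^{n}$ out of the sum, the left-hand side equals
\begin{equation*}
\epsilon^{n} \sum_{k=0}^n \frac{n}{n+k}\binom{n+k}{n-k}\bigl(\epsilon(r - \epsilon r^{-1})^2\bigr)^k = \epsilon^{n} \sum_{k=0}^n \frac{n}{n+k}\binom{n+k}{n-k}(\epsilon r^2 - 2 + \epsilon r^{-2})^k.
\end{equation*}
Choosing $x$ so that $-4x^2 = \epsilon r^2 - 2 + \epsilon r^{-2}$, the resulting sum matches~\eqref{main_id4} and therefore equals $\epsilon^{n}(-1)^n T_{2n}(x)$; it remains to evaluate $T_{2n}(x)$.

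I would split into two cases. When $p-q$ is odd ($\epsilon = -1$), take $x = (r + r^{-1})/2$; then~\eqref{eq.lgfu65w} gives $T_{2n}(x) = \tfrac{1}{2}(r^{2n} + r^{-2n})$ immediately. When $p-q$ is even ($\epsilon = 1$), take $x = i(r - r^{-1})/2$; then a short computation yields $x^2 - 1 = -\bigl((r + r^{-1})/2\bigr)^2$, so $x + \sqrt{x^2 - 1} = ir$ and hence $T_{2n}(x) = \tfrac{1}{2}\bigl((ir)^{2n} + (ir)^{-2n}\bigr) = \tfrac{(-1)^n}{2}(r^{2n} + r^{-2n})$. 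In both cases the prefactor $\epsilon^{n}(-1)^n$ combines with $T_{2n}(x)$ to produce exactly $\tfrac{1}{2}(r^{2n} + r^{-2n})$, which is the right-hand side.

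The main obstacle is parity bookkeeping: the Chebyshev argument is real in one case and purely imaginary in the other, and the $(-1)^n$ arising from $(ir)^{2n}$ has to cancel the $\epsilon^{n}$ exactly. Since~\eqref{main_id4} is valid for all complex $x$, passing to the imaginary substitution is harmless, and no analytic work beyond careful sign tracking is required.
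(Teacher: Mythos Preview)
Your proof is correct and follows essentially the same route as the paper: both reduce the identity to~\eqref{main_id4} via a substitution whose argument is real when $p-q$ is odd and purely imaginary when $p-q$ is even. The only cosmetic difference is that the paper packages both parities into the single substitution $x=\dfrac{(-i)^{p-q+1}F_{p+q}F_{p-q}}{2F_qF_p}$ and quotes a closed form for $T_n$ at that point, whereas you split into the two cases explicitly and use Catalan's identity to recognize $\dfrac{F_{p+q}F_{p-q}}{F_pF_q}=r-\epsilon r^{-1}$; your version is in fact a bit more self-contained.
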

\begin{proof}
Set $\displaystyle x=\frac{{(-i)^{p-q+1}F_{p + q} F_{p - q} }}{2 F_q F_p }$ in~\eqref{main_id4} and use 
\begin{equation*}
T_n \left( {\frac{i^{p - q + 1}F_{p + q} F_{p - q}}{2 F_q F_p }} \right) = i^{n(p - q + 1)}\frac{{F_p^{2n}  + ( - 1)^{n(p - q + 1)} F_q^{2n} }}{2 F_q^n F_p^n }.
\end{equation*}
\end{proof}
%
\begin{theorem}
	If $n$ is a non-negative integer and $p$ is a non-zero integer, then
	\begin{gather*}
	\sum_{k = 0}^n ( - 1)^{p(n-k)} \frac n{{n + k}}\binom{n + k}{n - k}\left( {\frac{F_{3p}}{F_{2p}}} \right)^{2k}   = \frac{{L_p^{4n}  + 1}}{2L_p^{2n} },\\
	\sum_{k = 0}^n ( - 1)^{(p + 1)(n-k)} \frac{n}{n + k}\binom{n + k}{n - k}5^{n-k} \left( \frac{{L_{3p} }}{F_{2p} } \right)^{2k}   = \frac{5^{2n}F_p^{4n}   + 1}{2 F_p^{2n}}.
	\end{gather*}
\end{theorem}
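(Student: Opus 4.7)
The plan is to derive both identities from the master identities \eqref{main_id3} and \eqref{main_id4}, by choosing a value of $x$ that simultaneously reproduces the left-hand side and admits an explicit evaluation of $T_{2n}(x)$ via the Binet-like formula \eqref{eq.lgfu65w}. The parity of $p$ will dictate whether we use \eqref{main_id3} (when the leading sign on the LHS is identically $1$) or \eqref{main_id4} (when it equals $(-1)^{n-k}$).

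For the first identity I would take $x=(L_p^2+1)/(2L_p)$. Using the well-known factorizations $F_{3p}=F_p(L_p^2-(-1)^p)$ and $F_{2p}=F_pL_p$, one verifies that $2x=F_{3p}/F_{2p}$ when $p$ is odd, while $4(x^2-1)=(F_{3p}/F_{2p})^2$ when $p$ is even. Thus \eqref{main_id4} (for $p$ odd) or \eqref{main_id3} (for $p$ even), applied at this $x$, reproduces the LHS once the appropriate power of $(-1)^n$ coming from the respective master identity is absorbed. The evaluation $x+\sqrt{x^2-1}=L_p$ then gives
\[
T_{2n}(x)=\tfrac12\bigl(L_p^{2n}+L_p^{-2n}\bigr)=\frac{L_p^{4n}+1}{2L_p^{2n}},
\]
which is exactly the claimed right-hand side.

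For the second identity I would take $x=(5F_p^2+1)/(2\sqrt 5\, F_p)$. Combining $L_{3p}=L_p(L_p^2-3(-1)^p)$, $F_{2p}=F_pL_p$, and $L_p^2=5F_p^2+4(-1)^p$ one finds $L_{3p}/F_{2p}=(5F_p^2+(-1)^p)/F_p$, so that $(L_{3p}/F_{2p})^2/5$ equals $4(x^2-1)$ when $p$ is odd and $4x^2$ when $p$ is even. Hence \eqref{main_id3} (for $p$ odd) or \eqref{main_id4} (for $p$ even), applied at this $x$, reproduces the LHS once one notices that the factor $5^n$ on the right-hand side arises from $5^{n-k}$ when $5^{-k}$ is absorbed into the coefficient. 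A short calculation gives $x+\sqrt{x^2-1}=\sqrt 5\, F_p$, so
\[
T_{2n}(x)=\frac{5^{2n}F_p^{4n}+1}{2\cdot 5^n F_p^{2n}},
\]
and multiplication by $5^n$ yields the stated right-hand side.

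The main obstacle will be the careful bookkeeping of the sign factors $(-1)^n$, $(-1)^{pn}$, and $(-1)^{(p+1)(n-k)}$ that accompany the applications of \eqref{main_id3} versus \eqref{main_id4}, and the verification that in each identity a single choice of $x$ handles both parities of $p$ uniformly. Once those are in place, everything else is routine simplification of $\sqrt{x^2-1}$ via the Fibonacci--Lucas identity $L_p^2-5F_p^2=4(-1)^p$.
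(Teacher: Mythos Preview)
Your argument is correct. The substitutions $x=(L_p^2+1)/(2L_p)$ and $x=(5F_p^2+1)/(2\sqrt5\,F_p)$ do the job, and the parity split between \eqref{main_id3} and \eqref{main_id4} is handled cleanly; the evaluations of $T_{2n}(x)$ via $x+\sqrt{x^2-1}=L_p$ and $x+\sqrt{x^2-1}=\sqrt5\,F_p$ are exactly right.

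The paper, however, proceeds differently: it avoids the case distinction on the parity of $p$ by inserting a complex factor into the substitution and working exclusively with \eqref{main_id4}. Concretely, it sets $x=\dfrac{(-i)^{p+1}F_{3p}}{2F_{2p}}$ for the first identity and $x=\dfrac{(-i)^{p}L_{3p}}{2\sqrt5\,F_{2p}}$ for the second, and appeals to the closed forms
\[
T_n\!\left(\frac{(-i)^{p+1}F_{3p}}{2F_{2p}}\right)=(-i)^{n(p+1)}\frac{L_p^{2n}+(-1)^{n(p+1)}}{2L_p^{n}},\qquad
T_n\!\left(\frac{(-i)^{p}L_{3p}}{2\sqrt5\,F_{2p}}\right)=(-i)^{np}\frac{5^nF_p^{2n}+(-1)^{np}}{2\sqrt{5^n}\,F_p^{n}}.
\]
The power $(-i)^{2k(p+1)}=(-1)^{k(p+1)}$ then automatically converts $(-4)^k x^{2k}$ into $(-1)^{kp}(F_{3p}/F_{2p})^{2k}$, so both parities of $p$ are covered by one stroke. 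Your route is more elementary in that it stays over the reals and makes the Fibonacci--Lucas simplifications explicit; the paper's route is more uniform but relies on the reader being comfortable with Chebyshev polynomials at purely imaginary arguments. Either way the same master identity \eqref{main_id4} (together with its companion \eqref{main_id3} in your version) is doing the work.
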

\begin{proof}
	Set $\displaystyle
	x = \frac{(-i)^{p+1}{F_{3p} }}{{2F_{2p} }}$ and $\displaystyle x=\frac{(-i)^pL_{3p} }{2\sqrt 5F_{2p}}$,
	in turn, in~\eqref{main_id4} and use 
		\begin{gather*}
	T_n \left( \frac{(-i)^{p+1}{F_{3p} }}{2F_{2p}} \right) = (-i)^{n(p + 1)} \frac{L_p^{2n}  + ( - 1)^{n(p + 1)} }{2L_p^n},\\ 
	T_n \left(\frac{(-i)^pL_{3p} }{2\sqrt 5F_{2p}} \right) = (-i)^{np} \frac{{F_p^{2n} 5^n  + ( - 1)^{np} }}{2 \sqrt{5^n}F_p^n}.
	\end{gather*}
	\end{proof}
\begin{lemma}
If $p$ and $q$ are integers, then
\begin{gather}
T_{2n} \left( {\sqrt{ \frac{( - 1)^{q + 1}F_q^2 }{4F_p F_{p + q}}\alpha ^{2p + q} }} \right) = \frac{{( - 1)^{n} }}{2}\!\left((-1)^{nq}\frac{{F_{p + q}^n }}{{F_p^n }}\alpha ^{qn}  + \frac{{F_p^n }}{{F_{p + q}^n }}\beta ^{qn}\right)\!,\quad \text{$p\ne 0$, $p\ne -q$}, \label{eq.in8cl0x}\\
T_{2n} \left(\sqrt { \frac{( - 1)^{q + 1}F_q^2}{4F_p F_{p + q}}\beta ^{2p + q}} \right) = \frac{( - 1)^{n} }{2}\!\left((-1)^{nq}\frac{{F_{p + q}^n }}{{F_p^n }}\beta ^{qn}  + \frac{{F_p^n }}{{F_{p + q}^n }}\alpha ^{qn}\right)\!,\quad \text{$p\ne 0$, $p\ne -q$},\nonumber
\end{gather}
\begin{gather}
T_{2n} \left( \sqrt{\frac{5( - 1)^{q + 1}F_q^2 }{4L_p L_{p + q} }\alpha ^{2p + q} } \right) = \frac{( - 1)^{n}}{2}\!\left((-1)^{nq}\frac{{L_{p + q}^n }}{{L_p^n }}\alpha ^{qn}  + \frac{{L_p^n }}{{L_{p + q}^n }}\beta ^{qn}\right)\!, \nonumber\\
T_{2n} \left(\sqrt{ \frac{5(- 1)^{q + 1}F_q^2 }{4L_p L_{p + q} }\beta ^{2p + q} } \right) = \frac{( - 1)^{n}}{2}\!\left((-1)^{nq}\frac{{L_{p + q}^n }}{{L_p^n }}\beta ^{qn}  + \frac{{L_p^n }}{{L_{p + q}^n }}\alpha ^{qn}\right)\!.\nonumber 
\end{gather}
\end{lemma}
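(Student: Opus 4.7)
The plan is to read the right-hand side of each identity in the form $\tfrac{1}{2}(A^n+B^n)$, where $A$ and $B$ are to be chosen so that $AB=1$ and $A+B=4y^2-2$, with $y$ the argument of $T_{2n}$ on the left. The rationale is the Binet-like formula \eqref{eq.lgfu65w}, which gives
\begin{equation*}
T_{2n}(y)=\tfrac{1}{2}\bigl((y+\sqrt{y^2-1})^{2n}+(y-\sqrt{y^2-1})^{2n}\bigr),
\end{equation*}
and the two quantities $(y\pm\sqrt{y^2-1})^2$ satisfy the conditions above and are the two roots of $t^2-(4y^2-2)t+1=0$. Consequently any pair $A,B$ with $AB=1$ and $A+B=4y^2-2$ must coincide with $(y\pm\sqrt{y^2-1})^2$, whence $T_{2n}(y)=\tfrac{1}{2}(A^n+B^n)$.

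For the first identity \eqref{eq.in8cl0x}, a small rewriting of the right-hand side (absorbing the $(-1)^n$ and $(-1)^{nq}$ inside the $n$th powers) shows that it equals
\begin{equation*}
\tfrac{1}{2}\!\left(\Bigl((-1)^{q+1}\tfrac{F_{p+q}}{F_p}\alpha^q\Bigr)^{\!n}+\Bigl(-\tfrac{F_p}{F_{p+q}}\beta^q\Bigr)^{\!n}\right)\!,
\end{equation*}
so the natural choice is $A:=(-1)^{q+1}\tfrac{F_{p+q}}{F_p}\alpha^q$ and $B:=-\tfrac{F_p}{F_{p+q}}\beta^q$. Using $\alpha\beta=-1$, a one-line computation gives $AB=1$. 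The entire lemma then reduces to verifying the single Fibonacci identity $A+B=4y^2-2$, which after multiplication by $F_pF_{p+q}$ becomes
\begin{equation*}
(-1)^{q+1}F_q^2\alpha^{2p+q}=(-1)^{q+1}F_{p+q}^2\alpha^q-F_p^2\beta^q+2F_pF_{p+q}.
\end{equation*}

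This is the main, indeed only, piece of genuine work; it is purely mechanical. Using $5F_m^2=L_{2m}-2(-1)^m$, the product formula $5F_pF_{p+q}=L_{2p+q}-(-1)^pL_q$, and $\beta=-\alpha^{-1}$, both sides collapse after multiplication by $5$ to $(-1)^{q+1}\alpha^{2p+3q}+2\alpha^{2p+q}+(-1)^{q+1}\alpha^{2p-q}$. Once this is in hand, the first identity of the lemma follows at once, and the second is obtained by applying the Galois symmetry $\alpha\leftrightarrow\beta$. The two Lucas versions yield to the very same recipe with the candidates $A:=(-1)^{q+1}\tfrac{L_{p+q}}{L_p}\alpha^q$ and $B:=-\tfrac{L_p}{L_{p+q}}\beta^q$ (again $AB=1$); the corresponding identity to verify is $5(-1)^{q+1}F_q^2\alpha^{2p+q}=(-1)^{q+1}L_{p+q}^2\alpha^q-L_p^2\beta^q+2L_pL_{p+q}$, which is again a direct Binet expansion.
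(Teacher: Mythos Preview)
Your argument is correct. The key observation---that $T_{2n}(y)=\tfrac12(A^n+B^n)$ whenever $AB=1$ and $A+B=4y^2-2$---is exactly the content of the Binet-like formula \eqref{eq.lgfu65w} combined with $T_{2n}(y)=T_n(2y^2-1)$, and it neatly sidesteps any explicit extraction of square roots. Your choice of $A$ and $B$ is the right one, the check that $AB=1$ is immediate from $\alpha\beta=-1$, and your reduction of $A+B=4y^2-2$ to the displayed Fibonacci (resp.\ Lucas) identity is clean. I verified those identities independently: after multiplying by $5$ and using $5F_m^2=L_{2m}-2(-1)^m$, $L_m^2=L_{2m}+2(-1)^m$, $5F_pF_{p+q}=L_{2p+q}-(-1)^pL_q$, $L_pL_{p+q}=L_{2p+q}+(-1)^pL_q$, the cross terms cancel and both sides indeed reduce to $(-1)^{q+1}\alpha^{2p+3q}+(-1)^{q+1}\alpha^{2p-q}+2\alpha^{2p+q}$, exactly as you claim.

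As for the comparison: the paper states this lemma without proof. The implicit route, by analogy with Lemma~\ref{lem.v25emwx} and equation~\eqref{eq.h51c7s8}, would be to substitute the given $y$ directly into \eqref{eq.lgfu65w} and simplify $y\pm\sqrt{y^2-1}$. Your reformulation---guessing $A,B$ from the target expression and verifying the two symmetric-function conditions---is equivalent but arguably tidier, since it replaces the computation of $\sqrt{y^2-1}$ (which here involves nested radicals) by a single polynomial identity in $\alpha$. Both approaches rest on the same Binet formula, so there is no genuine methodological difference; yours is just a cleaner packaging.
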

\begin{theorem}\label{thm.awwyoac}
If $n$ is a positive integer and $p$, $q$ and $t$ are any integers, then
\begin{gather}
\sum_{k = 0}^n { \frac{( - 1)^{(n-k)q}\,n}{{n + k}}\binom{n + k}{n - k}F_{p + q}^{n - k} F_p^{n - k} F_q^{2k} L_{k(2p + q) + t} }= \frac{1}{2}\left(F_{p + q}^{2n} L_{t+qn}  + F_p^{2n} L_{t-qn}\right)\!,\label{eq.sqpsz3a}\\
\sum_{k = 0}^n  \frac{(-1)^{(n-k)q}\,n}{n + k}\binom{n + k}{n - k} F_{p + q}^{n - k} F_p^{n - k} F_q^{2k} F_{k(2p + q) + t} = \frac{1}{2}\left( F_{p + q}^{2n} F_{t+qn}  + F_p^{2n} F_{t-qn}\right)\!,\label{eq.uqixdew}\\ 
\sum_{k = 0}^n { \frac{( - 1)^{(n-k)q}\, n}{{n + k}}\binom{n + k}{n - k} 5^k L_{p + q}^{n - k} L_p^{n - k} F_q^{2k} L_{k(2p + q) + t} }= \frac{1}{2}\left(L_{p + q}^{2n} L_{t+qn}  + L_p^{2n} L_{t-qn}\right)\!,\label{eq.tm4ir7g}\\
\sum_{k = 0}^n { \frac{( - 1)^{(n-k)q}\, n}{{n + k}}\binom{n + k}{n - k}5^k L_{p + q}^{n - k} L_p^{n - k} F_q^{2k} F_{k(2p + q) + t} }= \frac{1}{2}\left(L_{p + q}^{2n} F_{t+qn}  + L_p^{2n} F_{t-qn}\right)\!.\label{eq.h4xq8wt}
\end{gather}
\end{theorem}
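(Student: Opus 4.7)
The plan is to plug the Chebyshev evaluations provided in the preceding Lemma directly into the master identity \eqref{main_id4}, and then to form appropriate linear combinations weighted by $\alpha^t$ and $\beta^t$ so that the free index $k(2p+q)$ gets shifted by $t$ and the right-hand side collapses into Lucas or Fibonacci numbers at two symmetric indices $t \pm qn$.

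Concretely, I take $x = \sqrt{\tfrac{(-1)^{q+1} F_q^{2}}{4 F_p F_{p+q}}\alpha^{2p+q}}$ in \eqref{main_id4}. Then
\begin{equation*}
(-4)^{k} x^{2k} = \frac{(-1)^{(q+2)k} F_q^{2k}}{F_p^{k} F_{p+q}^{k}}\,\alpha^{k(2p+q)} = \frac{(-1)^{qk} F_q^{2k}}{F_p^{k} F_{p+q}^{k}}\,\alpha^{k(2p+q)}.
\end{equation*}
Multiplying through by $F_p^{n} F_{p+q}^{n}$ and invoking \eqref{eq.in8cl0x}, the factor $(-1)^{n}$ from \eqref{main_id4} cancels the $(-1)^{n}$ appearing in the Lemma, producing
\begin{equation*}
\sum_{k=0}^{n} (-1)^{qk}\,\frac{n}{n+k}\binom{n+k}{n-k} F_{p+q}^{n-k} F_p^{n-k} F_q^{2k}\,\alpha^{k(2p+q)} = \frac{1}{2}\Bigl((-1)^{nq} F_{p+q}^{2n}\,\alpha^{qn} + F_p^{2n}\,\beta^{qn}\Bigr).
\end{equation*}
The analogous substitution $x = \sqrt{\tfrac{(-1)^{q+1}F_q^{2}}{4F_pF_{p+q}}\beta^{2p+q}}$, together with the second identity of the Lemma, gives the companion formula with every $\alpha$ and $\beta$ interchanged on the right and with $\beta^{k(2p+q)}$ on the left.

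I now form $\alpha^{t}\cdot(\text{first identity}) \pm \beta^{t}\cdot(\text{second identity})$. On the left the factors $\alpha^{k(2p+q)+t}\pm\beta^{k(2p+q)+t}$ consolidate into $L_{k(2p+q)+t}$ (for $+$) or $\sqrt{5}\,F_{k(2p+q)+t}$ (for $-$). On the right the diagonal terms $(-1)^{nq}F_{p+q}^{2n}(\alpha^{qn+t}\pm\beta^{qn+t})$ immediately yield $(-1)^{nq}F_{p+q}^{2n}L_{qn+t}$ or $(-1)^{nq}F_{p+q}^{2n}\sqrt{5}\,F_{qn+t}$. The cross terms $F_p^{2n}\,(\beta^{qn}\alpha^{t}\pm\alpha^{qn}\beta^{t})$ are handled by pulling out $\alpha\beta=-1$: one checks
\begin{equation*}
\beta^{qn}\alpha^{t} + \alpha^{qn}\beta^{t} = (-1)^{t}L_{qn-t} = (-1)^{qn}L_{t-qn},
\end{equation*}
and similarly $\beta^{qn}\alpha^{t}-\alpha^{qn}\beta^{t} = (-1)^{qn}\sqrt{5}\,F_{t-qn}$, using $L_{-m}=(-1)^{m}L_{m}$ and $F_{-m}=(-1)^{m-1}F_{m}$. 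The overall factor $(-1)^{nq}$ then matches the $(-1)^{(n-k)q} = (-1)^{nq}(-1)^{kq}$ on the left of the theorem after dividing through, producing exactly \eqref{eq.sqpsz3a} and \eqref{eq.uqixdew}.

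The identities \eqref{eq.tm4ir7g} and \eqref{eq.h4xq8wt} follow by the same procedure starting instead from the last two identities of the Lemma, substituting $x = \sqrt{\tfrac{5(-1)^{q+1}F_q^{2}}{4L_pL_{p+q}}\alpha^{2p+q}}$ and its $\beta$-counterpart into \eqref{main_id4}; here the extra factor of $5$ inside the square root supplies the $5^{k}$ in the summand, and multiplication by $L_p^{n}L_{p+q}^{n}$ rebuilds the polynomial weights $L_{p+q}^{n-k}L_p^{n-k}$. The main obstacle, as indicated, will be the careful sign bookkeeping, particularly the reduction of $\beta^{qn}\alpha^{t}\pm\alpha^{qn}\beta^{t}$ to Lucas/Fibonacci numbers at $t-qn$ and verifying that the $(-1)^{nq}$ collected on the right exactly absorbs the discrepancy between $(-1)^{qk}$ and the target $(-1)^{(n-k)q}$ on the left.
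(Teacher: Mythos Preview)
Your proposal is correct and follows essentially the same route as the paper: substitute $x=\sqrt{\tfrac{(-1)^{q+1}F_q^{2}}{4F_pF_{p+q}}\alpha^{2p+q}}$ (and its $L$-analogue) into \eqref{main_id4}, invoke the evaluations from the preceding Lemma, and then combine the $\alpha$- and $\beta$-versions weighted by $\alpha^{t}$, $\beta^{t}$ to recover $L_{k(2p+q)+t}$ and $F_{k(2p+q)+t}$. Your write-up in fact supplies more of the sign bookkeeping (the reduction of $\beta^{qn}\alpha^{t}\pm\alpha^{qn}\beta^{t}$ and the matching of $(-1)^{qk}$ with $(-1)^{(n-k)q}$) than the paper does.
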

\begin{proof}
In~\eqref{main_id4} set
$\displaystyle
x=\sqrt{ \frac{( - 1)^{q + 1}F_q^2 }{4F_p F_{p + q} }\alpha ^{2p + q}}
$
and use~\eqref{eq.in8cl0x} to obtain
\begin{equation*}
\sum_{k = 0}^n {( - 1)^{kq} \frac{n}{{n + k}}\binom{n + k}{n - k}F_{p + q}^{n - k} F_p^{n - k} F_q^{2k} \alpha^{k(2p + q) + t} }= \frac{{( - 1)^{nq} }}{2}F_{p + q}^{2n} \alpha^{qn + t}  + \frac{{( - 1)^t }}{2}F_p^{2n} \beta^{qn - t},
\end{equation*}
from which~\eqref{eq.sqpsz3a} and~\eqref{eq.uqixdew} follow. The proof of~\eqref{eq.tm4ir7g} and~\eqref{eq.h4xq8wt} is similar; in~\eqref{main_id4}, set
$\displaystyle
x=\sqrt{ \frac{5( - 1)^{q + 1}F_q^2 }{4L_p L_{p + q} }\alpha ^{2p + q}}
$.
\end{proof}
\begin{example}
If $n$ is a positive integer and $t$ is any integer, then
\begin{gather*}
\sum_{k = 0}^n ( - 2)^{n-k}\frac{n}{n + k}\binom{n + k}{n - k} L_{5k + t}   = \frac{1}{2}\big(L_{t-n}  + 4^{n} L_{n + t}\big),\\
\sum_{k = 0}^n (-2)^{n-k} \frac{n}{n + k}\binom{n + k}{n - k} F_{5k + t}   = \frac{1}{2}\big(F_{t-n}  + 4^{n} F_{n + t}\big),\\
\sum_{k = 0}^n ( -2)^{n-k} \frac{n}{n + k}\binom{n + k}{n - k} 5^k L_{k + t}  = \frac{1}{2}\big(4^{n} L_{t-n}+L_{n + t} \big),\\
\sum_{k = 0}^n ( - 2)^{n-k}\frac{n}{{n + k}}\binom{n + k}{n - k}5^kF_{k + t}   = \frac12 \big(4^{n} F_{t-n}+F_{n + t}\big).
\end{gather*}
\end{example}

\section{Binomial Fibonacci and Lucas sums from identities involving $U_n(x)$}

Using the fact that $\frac{d}{dx} T_n(x)=n U_{n-1}(x)$ we get from \eqref{main_id1}
\begin{equation}\label{main_id5}
\sum_{k=1}^n (-2)^{k}  \frac k{n + k}\binom{n + k}{n - k} (1 \mp x)^{k-1} = \mp U_{n-1} (x),
\end{equation}
and  from \eqref{main_id3} and \eqref{main_id4}
\begin{equation*}
\sum_{k=1}^n 4^k  \frac k{n + k}\binom{n + k}{n - k} (x^2-1)^{k-1} =  \frac{U_{2n-1} (x)}{x}
\end{equation*}
and
\begin{equation}\label{main_id7}
\sum_{k=1}^n (-4)^{k}\frac k{n + k}\binom{n + k}{n - k} x^{2k} = (-1)^n x U_{2n-1} (x).
\end{equation}

From here, setting $x=3/2$ in \eqref{main_id5} and using $U_{n-1}(3/2)=F_{2n}$ we get
\begin{equation*}
\sum_{k=1}^n  \frac k{n + k}\binom{n + k}{n - k} = \frac12 F_{2n}
\end{equation*}
and
\begin{equation*}
\sum_{k=1}^n (-5)^{k-1} \frac k{n + k}\binom{n + k}{n - k} = \frac{(-1)^{n-1}}{2} F_{2n}.
\end{equation*}
Also, with $x=\sqrt{5}/2$ upon combining we produce
\begin{equation*}
\sum_{k=1}^n (-1)^{k-1} \frac{k}{n + k}\binom{n + k}{n - k} L_{3(k-1)} = \frac{1-(-1)^n}{2}L_n,
\end{equation*}
where it was used that 
\begin{equation*}
U_n \Big (\frac{\sqrt{5}}{2} \Big ) = 
\begin{cases}
L_{n+1}, & \text{\rm $n$ even;} \\ 
\sqrt{5} F_{n+1}, & \text{\rm $n$ odd.} 
\end{cases}
\end{equation*}

In general, working with \eqref{main_id2} have the relations
\begin{equation*}
\sum_{k=1}^n (-2)^{k-1} \frac k{n + k}\binom{n + k}{n - k} \big(1 \mp T_m(x)\big)^{k-1} = (\pm 1)^{n-1} \frac{U_{nm-1}(x)}{2U_{m-1}(x)},
\end{equation*}
where we have used that $U_{m-1}\big(T_n(x)\big)=\frac{U_{mn-1}(x)}{U_{n-1}(x)}$.
\begin{lemma}\label{Lem.hspwn3b}
If $n$ is a positive integer  and $p$ an integer, then
\begin{align*}
U_n\Big(\frac{L_p}2\Big)&=\frac{F_{p(n + 1)}}{F_p},\quad\text{\rm $p$ even},\\
U_n\Big(\frac{iL_p}2\Big)&=\frac{i^nF_{p(n + 1)}}{F_p},\quad\text{\rm $p$ odd},\\
U_n \Big( {\frac{ \sqrt 5 F_p}{2}} \Big) &=  \begin{cases}
 L_{p(n + 1)} /L_p,&\text{\rm $p$ odd, $n$ even};  \\ 
 F_p F_{p(n + 1)} F_p /L_p,&\text{\rm $p$,  $n$ odd}; 
 \end{cases}\\
U_n \Big( {\frac{{i\sqrt 5 F_p  }}{2}}\Big) &=  \begin{cases}
 i^nL_{p(n + 1)} /L_p,&\text{\rm $p$, $n$ even};  \\ 
 i^n\sqrt 5 F_{p(n + 1)}  /L_p,&\text{\rm $p$ even, $n$ odd}.
 \end{cases}
\end{align*}
\end{lemma}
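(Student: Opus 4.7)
The plan is to derive all four statements from the Binet-like representation
\[
U_n(x) = \frac{(x+\sqrt{x^2-1})^{n+1} - (x-\sqrt{x^2-1})^{n+1}}{2\sqrt{x^2-1}},
\]
using only the fundamental identity $L_p^2 - 5F_p^2 = 4(-1)^p$ together with the Binet forms $\alpha^p=(L_p+\sqrt{5}F_p)/2$ and $\beta^p=(L_p-\sqrt{5}F_p)/2$. In every case the point $x$ has been chosen precisely so that $x^2-1$ collapses to $\pm L_p^2/4$ or $\pm 5F_p^2/4$, which trivialises the square root and lets $x\pm\sqrt{x^2-1}$ be recognised as a power of $\alpha$ or $\beta$ (possibly multiplied by $i$ or $-1$).

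For the two formulas with argument $L_p/2$ or $iL_p/2$, I would first use $L_p^2 - 4 = 5F_p^2 \cdot (-1)^{p}$ to get $x^2-1 = (-1)^{p}\cdot 5F_p^2/4$, so that $\sqrt{x^2-1}$ is $\sqrt{5}F_p/2$ (when $p$ is even) or $i\sqrt{5}F_p/2$ (when $p$ is odd, with the extra factor of $i$ from $x=iL_p/2$ combining cleanly). Then $x+\sqrt{x^2-1}$ and $x-\sqrt{x^2-1}$ become exactly $\alpha^p$ and $\beta^p$ (in the even case) or $i\alpha^p$ and $i\beta^p$ (in the odd case). Substituting back into the Binet formula and using $\sqrt{5}F_{p(n+1)} = \alpha^{p(n+1)}-\beta^{p(n+1)}$ yields the first two assertions immediately; the $i^n$ prefactor in the odd case arises from pulling $i^{n+1}$ out of the numerator against the single $i$ in the denominator.

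For the two formulas with argument $\sqrt{5}F_p/2$ or $i\sqrt{5}F_p/2$, an analogous computation using $5F_p^2-4 = L_p^2 \cdot (-1)^{p+1}$ gives $\sqrt{x^2-1}$ equal to $L_p/2$ or $iL_p/2$. The key wrinkle, and the place where the parity of $n$ enters, is that here one finds $x+\sqrt{x^2-1}=\alpha^p$ but $x-\sqrt{x^2-1}=-\beta^p$ (because $\sqrt{5}F_p - L_p = -2\beta^p$), so the numerator becomes $\alpha^{p(n+1)} - (-1)^{n+1}\beta^{p(n+1)}$. According to the parity of $n$ this is either $L_{p(n+1)}$ or $\sqrt{5}F_{p(n+1)}$, producing the case split shown in the lemma. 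The $i\sqrt{5}F_p/2$ variant is handled identically, with the only change being an overall factor of $i^n$ surviving after cancellation.

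The argument is essentially bookkeeping; the one place to be careful is the branch of the square root, which must be fixed consistently so that $(x+\sqrt{x^2-1})(x-\sqrt{x^2-1})=1$ matches $\alpha^p\beta^p=(-1)^p$ (or the appropriate imaginary analogue). Once that sign convention is pinned down, every case of the lemma follows from one or two lines of Binet manipulation.
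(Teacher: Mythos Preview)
Your approach is correct and is exactly the one the paper intends: the paper states this lemma without proof, but just before the analogous Lemma~\ref{lem.v25emwx} for $T_n$ it says the results are obtained ``by substituting $x=L_p/2$ and $x=\sqrt 5F_p/2$ in~\eqref{eq.lgfu65w}, in turn,'' i.e.\ by plugging into the Binet-like closed form and simplifying via $L_p^2-5F_p^2=4(-1)^p$. Your derivation does precisely the $U_n$ version of this, and your observation that $x-\sqrt{x^2-1}=-\beta^p$ (rather than $\beta^p$) when $x=\sqrt5F_p/2$ is exactly what produces the parity split on $n$; note in passing that your computation yields $\sqrt5\,F_{p(n+1)}/L_p$ in the case $p,n$ odd, which is correct --- the printed ``$F_pF_{p(n+1)}F_p/L_p$'' in the statement is evidently a typographical slip.
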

\begin{theorem}\label{thm.nl0qyn5}
If $n$ is a positive integer and $p$ is an integer, then
\begin{equation*}
\sum_{k = 1}^n (-1)^{(p-1)(n-k)} \frac {k}{n + k} \binom{n + k}{n - k} L_p^{2k - 1} = \frac{F_{2np}}{2F_p},\quad p\ne0,
\end{equation*}
\begin{equation*}
\sum_{k = 1}^n (-1)^{p(n-k)} \frac {k}{n + k} \binom{n + k}{n - k} 5^{k-1} F_p^{2k - 1} = \frac{F_{2np}}{2L_p}.
\end{equation*}
\end{theorem}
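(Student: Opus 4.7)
The plan is to deduce both identities from the Chebyshev identity \eqref{main_id7}, choosing appropriate specializations of $x$ and then applying Lemma \ref{Lem.hspwn3b}. Because the right-hand sides involve the parity-dependent signs $(-1)^{(p-1)(n-k)}$ and $(-1)^{p(n-k)}$, I expect to split into the cases $p$ even and $p$ odd, switching between a real and a purely imaginary argument so that the prefactor $(-4)^k x^{2k}$ carries the desired sign pattern in each case.

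For the first identity, I would set $x = L_p/2$ when $p$ is even and $x = iL_p/2$ when $p$ is odd. In the even case, $(-4)^k x^{2k} = (-1)^k L_p^{2k}$ and Lemma \ref{Lem.hspwn3b} gives $U_{2n-1}(L_p/2) = F_{2np}/F_p$, so \eqref{main_id7} reads
\begin{equation*}
\sum_{k=1}^n (-1)^k \frac{k}{n+k}\binom{n+k}{n-k} L_p^{2k} = (-1)^n\,\frac{L_p}{2}\cdot\frac{F_{2np}}{F_p}.
\end{equation*}
Dividing both sides by $(-1)^n L_p$ and rewriting $(-1)^{k-n}=(-1)^{n-k}$ gives the first claim, since $(-1)^{(p-1)(n-k)}=(-1)^{n-k}$ for $p$ even. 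In the odd case, the imaginary argument yields $(-4)^k x^{2k} = L_p^{2k}$ with no sign, while Lemma \ref{Lem.hspwn3b} gives $U_{2n-1}(iL_p/2) = i^{2n-1}F_{2np}/F_p$; the product $(-1)^n\cdot(iL_p/2)\cdot i^{2n-1}$ simplifies to $L_p/2$, delivering the signless form appropriate to $(-1)^{(p-1)(n-k)}=1$.

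The argument for the second identity is entirely parallel: take $x = \sqrt{5}F_p/2$ when $p$ is odd and $x = i\sqrt{5}F_p/2$ when $p$ is even. In the first of these, $(-4)^k x^{2k} = (-1)^k 5^k F_p^{2k}$ and Lemma \ref{Lem.hspwn3b} supplies $U_{2n-1}(\sqrt{5}F_p/2)=\sqrt{5}F_{2np}/L_p$ (from the $p$ odd, $n$ odd branch applied at index $2n-1$); in the second, $(-4)^k x^{2k} = 5^k F_p^{2k}$ and $U_{2n-1}(i\sqrt{5}F_p/2) = i^{2n-1}\sqrt{5}F_{2np}/L_p$. After multiplying out and dividing by $5F_p$, both parity cases collapse to the common form with sign $(-1)^{p(n-k)}$.

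The main technical obstacle, and essentially the only one, is the sign and imaginary-unit bookkeeping in the two odd cases: one must verify that the factor $i^{2n-1}$ produced by Lemma \ref{Lem.hspwn3b} at the imaginary argument combines with the explicit $i$ in $x$ and the $(-1)^n$ sitting outside in \eqref{main_id7} to give a real, positive constant. Once that routine check is done, the two parity cases unite into the single formulas stated in the theorem.
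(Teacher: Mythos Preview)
Your proposal is correct and follows exactly the paper's approach: specialize \eqref{main_id7} at $x=L_p/2$, $x=iL_p/2$, $x=\sqrt{5}F_p/2$, and $x=i\sqrt{5}F_p/2$ according to the parity of $p$, invoke Lemma~\ref{Lem.hspwn3b}, and then track signs. The only difference is that you spell out the $i^{2n-1}$ bookkeeping in more detail than the paper does.
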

\begin{proof}
Evaluate \eqref{main_id7} at $x=L_p/2$, $x=iL_p/2$, $x=\sqrt 5F_p/2$ and $x=i\sqrt 5F_p/2$, in turn, using Lemma~\ref{Lem.hspwn3b}. This gives
\begin{gather*}
\sum_{k = 1}^n (\pm1)^{n-k}\frac k{n + k} \binom{n + k}{n - k} L_p^{2k - 1} = \frac{F_{2np}}{2F_p}\qquad\braces{\mbox{$p$ odd}}{\mbox{$p$ even}},\\
\sum_{k = 1}^n (\mp1)^{n-k} \frac k{n + k} \binom{n + k}{n - k} 5^{k - 1} F_p^{2k - 1} = \frac{F_{2pn}}{2L_p}\qquad\braces{\mbox{$p$ odd}}{\mbox{$p$ even}}.
\end{gather*}
This completes the proof.
\end{proof}
\begin{lemma}
If $n$ is a positive integer, then
\begin{gather*}
\sqrt {\frac{5}{{2\alpha }}}\, U_{2n - 1} \bigg( {\sqrt {\frac{{\alpha ^5 }}{8}} } \bigg) = 2^n\alpha^n  -(-1)^n \frac{{\beta^{n} }}{2^n},\nonumber\\
\sqrt {\frac{5}{{2\beta }}}\, U_{2n - 1} \bigg( {\sqrt {\frac{{\beta ^5 }}{8}} } \bigg) = 2^n\beta^n   - (-1)^n\frac{{\alpha ^{n} }}{{2^n}}\label{eq.qcelehc}.
\end{gather*}
\end{lemma}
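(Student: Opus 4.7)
The natural approach is to feed $x=\sqrt{\alpha^{5}/8}$ into the Binet-like formula
\[
U_{m}(x)=\frac{(x+\sqrt{x^{2}-1})^{m+1}-(x-\sqrt{x^{2}-1})^{m+1}}{2\sqrt{x^{2}-1}}
\]
with $m=2n-1$, and then simplify the resulting surds using the algebraic identities for the golden ratio. The second identity follows from the first by the symmetric substitution $\alpha\leftrightarrow\beta$, so I will focus on the first.

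The key computational step is to show that the radicals collapse. Since $\alpha^{2}=\alpha+1$ gives $\alpha^{5}=5\alpha+3$ and $\alpha-1=1/\alpha$, we obtain
\[
x^{2}-1=\frac{\alpha^{5}}{8}-1=\frac{5(\alpha-1)}{8}=\frac{5}{8\alpha},
\]
so $\sqrt{x^{2}-1}=\sqrt{5/(8\alpha)}$. Multiplying $x\pm\sqrt{x^{2}-1}$ through by $\sqrt{8\alpha}$ in the numerator yields
\[
x\pm\sqrt{x^{2}-1}=\frac{\alpha^{3}\pm\sqrt{5}}{\sqrt{8\alpha}}.
\]
Using $\alpha^{3}=2\alpha+1=2+\sqrt{5}$, one finds $\alpha^{3}+\sqrt{5}=2+2\sqrt{5}=4\alpha$ and $\alpha^{3}-\sqrt{5}=2$, the two clean expressions that make the whole argument work.

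Substituting these into the Binet-like formula with $m+1=2n$ gives
\[
U_{2n-1}(x)=\sqrt{\frac{8\alpha}{5}}\cdot\frac{(4\alpha)^{2n}-2^{2n}}{2\cdot(8\alpha)^{n}}=\sqrt{\frac{2\alpha}{5}}\left(2^{n}\alpha^{n}-\frac{1}{2^{n}\alpha^{n}}\right),
\]
after simplifying the power of $4^{n}/8^{n}=2^{-n}$ and factoring $(2^{n}\alpha^{n})^{2}-1$ by the difference of two terms. Multiplying both sides by $\sqrt{5/(2\alpha)}$ and replacing $1/\alpha$ by $-\beta$ (from $\alpha\beta=-1$), so that $1/\alpha^{n}=(-1)^{n}\beta^{n}$, produces the first stated identity. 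The second identity is obtained by repeating the argument verbatim with $\alpha$ and $\beta$ exchanged, using $\beta^{3}=2-\sqrt{5}$ in place of $\alpha^{3}=2+\sqrt{5}$.

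The main technical obstacle—really the only nontrivial step—is recognizing the coincidences $\alpha^{5}-8=5/\alpha$ and $\alpha^{3}\pm\sqrt{5}\in\{4\alpha,2\}$, which are what permits the nested square roots to disappear. Once these are in hand the rest is bookkeeping with powers of $2$ and one application of $\alpha\beta=-1$.
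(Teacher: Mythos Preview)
Your proof is correct. The paper states this lemma without proof, so there is no approach to compare against; your direct substitution into the Binet-like formula for $U_{m}(x)$, combined with the golden-ratio identities $\alpha^{5}-8=5/\alpha$ and $\alpha^{3}\pm\sqrt{5}\in\{4\alpha,2\}$, is exactly the natural route and carries through cleanly.
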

\begin{theorem}
If $n$ is a positive integer and $t$ is any integer, then
\begin{gather*}
\sum_{k = 1}^n ( - 2)^{n-k} \frac{k}{{n + k}}\binom{n + k}{n - k}L_{5k + t}   =\frac12\left( 4^{n} F_{ t +n+ 3}  -  F_{t-n+3}\right)\!,\\
\sum_{k = 1}^n ( -2)^{n-k}\frac{k}{{n + k}}\binom{n + k}{n - k}F_{5k + t}   =  \frac1{10}\left(4^{n} L_{t+n+3} -  L_{n - t - 3}\right)\!.
\end{gather*}
\end{theorem}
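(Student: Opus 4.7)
The plan is to apply the $U$-identity \eqref{main_id7} with $x=\sqrt{\alpha^5/8}$, use the preceding lemma to close the right-hand side, rescale by $(-2)^n$ to convert the sign pattern $(-4)^k x^{2k}$ into $(-2)^{n-k}\alpha^{5k}$, and then combine with the analogous $\beta$-identity to extract $L_{5k+t}$ and $F_{5k+t}$ via Binet. In spirit this mirrors the proof of Theorem~\ref{thm.awwyoac}, but uses the odd-indexed $U$-identity in place of~\eqref{main_id4}.

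Concretely, with $x^2=\alpha^5/8$ we have $(-4)^k x^{2k} = (-1)^k 2^{-k}\alpha^{5k}$, and by the preceding lemma
\begin{equation*}
x U_{2n-1}(x)\bigl|_{x=\sqrt{\alpha^5/8}} \;=\; \sqrt{\alpha^6/20}\left(2^n\alpha^n-(-1)^n\beta^n/2^n\right) \;=\; \frac{\alpha^3}{2\sqrt 5}\left(2^n\alpha^n-(-1)^n\beta^n/2^n\right),
\end{equation*}
so that~\eqref{main_id7} becomes, after multiplying through by $(-2)^n$ (which converts $(-1)^k 2^{-k}$ on the left into $(-2)^{n-k}$),
\begin{equation*}
\sum_{k=1}^n (-2)^{n-k}\frac{k}{n+k}\binom{n+k}{n-k}\alpha^{5k} \;=\; \frac{1}{2\sqrt 5}\bigl(4^n\alpha^{n+3}-(-1)^n\alpha^3\beta^n\bigr).
\end{equation*}
Since $xU_{2n-1}(x)$ is a polynomial in $x^2$ with integer coefficients (indeed $U_{2n-1}=2T_nU_{n-1}$), the Galois action $\alpha\leftrightarrow\beta,\sqrt 5\to-\sqrt 5$ furnishes the companion identity
\begin{equation*}
\sum_{k=1}^n (-2)^{n-k}\frac{k}{n+k}\binom{n+k}{n-k}\beta^{5k} \;=\; -\frac{1}{2\sqrt 5}\bigl(4^n\beta^{n+3}-(-1)^n\beta^3\alpha^n\bigr).
\end{equation*}

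Now multiply the $\alpha$-identity by $\alpha^t$ and the $\beta$-identity by $\beta^t$. Adding gives $L_{5k+t}$ on the left, and the right collapses as
\begin{equation*}
\frac{1}{2\sqrt 5}\Bigl[4^n(\alpha^{n+t+3}-\beta^{n+t+3})-(-1)^n(\alpha^{t+3}\beta^n-\alpha^n\beta^{t+3})\Bigr],
\end{equation*}
which via $\alpha\beta=-1$ and Binet simplifies to $\tfrac12\bigl(4^n F_{t+n+3}-F_{t-n+3}\bigr)$; subtracting (instead of adding) yields $\sqrt 5 F_{5k+t}$ on the left and, after dividing by $\sqrt 5$, the right-hand side $\tfrac{1}{10}\bigl(4^n L_{t+n+3}-L_{t-n+3}\bigr)$, giving the second identity (the indices $t-n+3$ and $n-t-3$ agreeing up to the standard reflection $L_{-m}=(-1)^mL_m$).

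The main technical obstacle is the branch/sign bookkeeping for $\sqrt{\beta^5/8}$: because $\beta<0$, the naive radical is non-real, and the cleanest way to obtain the $\beta$-companion identity is the Galois argument above, exploiting the fact that the left side of~\eqref{main_id7} depends on $x$ only through $x^2$. All remaining work is a routine Binet collapse using $\alpha\beta=-1$ to consolidate the cross terms $\alpha^{t+3}\beta^n$ and $\alpha^n\beta^{t+3}$ into Fibonacci or Lucas numbers with index $t-n+3$.
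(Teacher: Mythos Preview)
Your proof is correct and follows essentially the same route as the paper: substitute into~\eqref{main_id7}, invoke the preceding lemma, rescale, and combine the $\alpha$- and $\beta$-versions via Binet. Your choice to start with $x^2=\alpha^5/8$ and obtain the $\beta$-companion by the Galois automorphism (rather than plugging in $\sqrt{\beta^5/8}$ directly, as the paper does) is a cosmetic variation that neatly sidesteps the branch ambiguity you mention. One caveat: your closing parenthetical is not quite right, since $L_{-m}=(-1)^mL_m$ introduces a sign, so $L_{t-n+3}$ and $L_{n-t-3}$ do \emph{not} agree in general; in fact your derived form $\tfrac{1}{10}(4^nL_{t+n+3}-L_{t-n+3})$ is the correct one, and the paper's printed $L_{n-t-3}$ is a typo.
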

\begin{proof}
Set $x=\sqrt{\beta^5/8}$ in~\eqref{main_id7} and use~\eqref{eq.qcelehc} to obtain
\begin{equation*}
\sqrt 5 \sum_{k = 1}^n {\frac{{( - 1)^{k - 1} }}{{2^k }}\frac{k}{{n + k}}\binom{n + k}{n - k}\beta ^{5k + t}}  = ( - 2)^{n - 1} \beta ^{n + t + 3}  + \frac{{( - 1)^t }}{{2^{n + 1} }}\alpha ^{n - t - 3}
\end{equation*}
from which the results follow.
\end{proof}
\begin{theorem}
If $n$ is a non-negative integer and $t$ is any integer, then
\begin{align*}
\sum_{k = 1}^n ( - 1)^{k - 1} {\frac{k}{{n + k}}\binom{n + k}{n - k} L_{3k + t} } & =  \begin{cases}
 \frac{1}{2}L_{t + 3} L_n,&\text{\rm $n$ odd};  \\ 
  - \frac{5}{2}F_{t + 3} F_n,&\text{\rm $n$ even};   
 \end{cases}\\
\sum_{k = 1}^n ( - 1)^{k - 1}{\frac{k}{{n + k}}\binom{n + k}{n - k} F_{3k + t} }  &=  \begin{cases}
 \frac{1}{2}F_{t + 3} L_n,&\text{\rm $n$ odd};  \\ 
  -\frac{1}{2} L_{t + 3} F_n,&\text{\rm $n$ even}. 
 \end{cases}
\end{align*}
\end{theorem}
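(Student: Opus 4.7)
The plan is to mirror the proof of Theorem~\ref{thm.iq16bty} but with \eqref{main_id7} in place of \eqref{main_id4}, so that the $T_{2n}$-evaluations at $\sqrt{\alpha^3}/2,\sqrt{\beta^3}/2$ are replaced by $U_{2n-1}$-evaluations at the same points. First I substitute $x=\sqrt{\alpha^3}/2$ into \eqref{main_id7}; since $x^{2k}=\alpha^{3k}/4^k$, the left-hand side becomes $\sum_{k=1}^{n}(-1)^{k}\frac{k}{n+k}\binom{n+k}{n-k}\alpha^{3k}$, and it remains to evaluate $U_{2n-1}(\sqrt{\alpha^3}/2)$ in closed form.

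The key algebraic observation is $\alpha^3=2\alpha+1$, which yields $\alpha^6-4\alpha^3=1$ and hence $x^2-1=1/(4\alpha^3)$. A direct computation then gives $(x+\sqrt{x^2-1})^2=\alpha$, so that $x+\sqrt{x^2-1}=\sqrt{\alpha}$ and $x-\sqrt{x^2-1}=\alpha^{-1/2}$. The Binet formula for $U$ now produces
\[
U_{2n-1}\!\Big(\tfrac{\sqrt{\alpha^3}}{2}\Big)=\frac{\alpha^{n}-\alpha^{-n}}{2\sqrt{x^2-1}}=\sqrt{\alpha^3}\bigl(\alpha^{n}-(-1)^{n}\beta^{n}\bigr),
\]
using $\alpha^{-1}=-\beta$. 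Plugging back into \eqref{main_id7}, multiplying by $\alpha^{t}$ and by an overall sign, I obtain
\[
\sum_{k=1}^{n}(-1)^{k-1}\frac{k}{n+k}\binom{n+k}{n-k}\alpha^{3k+t}=\frac{\alpha^{t+3}\beta^{n}-(-1)^{n}\alpha^{n+t+3}}{2},
\]
and the conjugate identity (the same computation works verbatim, since $\beta^{6}-4\beta^{3}=1$) is obtained from $x=\sqrt{\beta^3}/2$.

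Adding the two identities produces the $L_{3k+t}$-sum on the left, while subtracting and dividing by $\sqrt{5}$ produces the $F_{3k+t}$-sum. Using $\alpha\beta=-1$ in the form $\alpha^{-m}=(-1)^{m}\beta^{m}$, the mixed cross-terms collapse as $\alpha^{t+3}\beta^{n}\pm\beta^{t+3}\alpha^{n}=(-1)^{n}\bigl(\alpha^{t+3-n}\pm\beta^{t+3-n}\bigr)$, so the right-hand sides simplify to $\tfrac{(-1)^{n}}{2}\bigl(L_{t+3-n}-L_{n+t+3}\bigr)$ and $\tfrac{(-1)^{n}}{2}\bigl(F_{t+3-n}-F_{n+t+3}\bigr)$ respectively. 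The four piecewise cases of the theorem then drop out by splitting on the parity of $n$ and invoking the standard identities
\[
L_{m+k}+(-1)^{k}L_{m-k}=L_{m}L_{k},\qquad L_{m+k}-(-1)^{k}L_{m-k}=5F_{m}F_{k},
\]
\[
F_{m+k}+(-1)^{k}F_{m-k}=F_{m}L_{k},\qquad F_{m+k}-(-1)^{k}F_{m-k}=L_{m}F_{k},
\]
applied with $m=t+3$, $k=n$. The main technical obstacle is the closed-form evaluation of $U_{2n-1}(\sqrt{\alpha^3}/2)$; once the single algebraic fact $\alpha^{6}-4\alpha^{3}=1$ is isolated, everything afterward is routine parity bookkeeping, exactly as in the proof of Theorem~\ref{thm.iq16bty}.
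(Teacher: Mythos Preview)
Your argument is correct and follows essentially the same route as the paper's own proof: substitute $x=\sqrt{\alpha^{3}}/2$ (and its conjugate) into \eqref{main_id7}, evaluate $U_{2n-1}$ at that point via the Binet-type formula, and then combine the $\alpha$- and $\beta$-identities. The only cosmetic differences are that you derive the closed form $U_{2n-1}(\sqrt{\alpha^{3}}/2)=\sqrt{\alpha^{3}}\bigl(\alpha^{n}-(-1)^{n}\beta^{n}\bigr)$ explicitly (the paper simply quotes the equivalent piecewise evaluation) and that you finish with the standard $L_{m\pm k}$, $F_{m\pm k}$ addition formulas rather than factoring $\beta^{t+3}(\alpha^{n}\pm\beta^{n})$ directly.
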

\begin{proof}
Set $x=\sqrt{\alpha^3}/2$ in~\eqref{main_id7} and use 
\begin{equation*}
U_{2n - 1} \Big( {\frac{{\sqrt {\alpha ^3 } }}{2}} \Big) = \begin{cases}
\sqrt {\alpha ^3 } L_n,&\text{\rm  $n$ odd};  \\[4pt]
\sqrt {5\alpha ^3 } F_n, &\text{\rm $n$ even}, 
\end{cases}
\end{equation*}
to obtain
\begin{equation*}
\sum_{k = 1}^n ( - 1)^{k - 1} {\frac{k}{{n + k}}\binom{n + k}{n - k} \beta^{3k + t} }  = \frac{\beta^{t + 3}}{2}\cdot\begin{cases}
  L_n,&\text{$n$ odd};  \\ 
  \sqrt 5F_n,&\text{$n$ even}; 
 \end{cases}
\end{equation*}
from which the results follow.
\end{proof}
\begin{theorem}
If $n$ is a non-negative integer and $t$ is any integer, then
\begin{gather}
\sum_{k = 1}^n ( - 4)^{k - 1} {\frac{k}{{n + k}}\binom{n + k}{n - k} L_{k + t} }  =  \begin{cases}
 -\frac12 L_{t + 1} L_{3n},&\text{\rm $n$ odd};  \\ 
  \frac52F_{t + 1} F_{3n},&\text{\rm $n$ even};  \\ 
 \end{cases}\\
\sum_{k = 1}^n ( - 4)^{k - 1}{\frac{k}{{n + k}}\binom{n + k}{n - k}F_{k + t} }  =  \begin{cases}
 - \frac12 F_{t + 1} L_{3n},&\text{\rm $n$ odd};  \\ 
  \frac12 L_{t + 1} F_{3n},&\text{\rm $n$  even}. 
 \end{cases}
\end{gather}
\end{theorem}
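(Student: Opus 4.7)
The plan is to mirror the proof of the preceding theorem for $L_{3k+t}$, $F_{3k+t}$: apply \eqref{main_id7} with $x=\sqrt{\alpha}$ and with $x=\sqrt{\beta}$ in turn, and then recombine the two resulting single-term identities using $L_{k+t}=\alpha^{k+t}+\beta^{k+t}$ and $\sqrt{5}\,F_{k+t}=\alpha^{k+t}-\beta^{k+t}$.

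The central computation is $U_{2n-1}(\sqrt{\alpha})$ via the Binet-like formula for $U_n$. Because $\alpha^2=\alpha+1$, we have $\alpha-1=\alpha^{-1}=-\beta$, so with $x=\sqrt{\alpha}$ the auxiliary square root simplifies as $\sqrt{x^2-1}=\alpha^{-1/2}$ and $x\pm\sqrt{x^2-1}=(\alpha\pm1)/\sqrt{\alpha}=\alpha^{\pm 3/2}$. Thus $(x+\sqrt{x^2-1})^{2n}=\alpha^{3n}$ and $(x-\sqrt{x^2-1})^{2n}=\alpha^{-3n}=(-1)^n\beta^{3n}$, which yields
\begin{equation*}
\sqrt{\alpha}\,U_{2n-1}(\sqrt{\alpha}) = \frac{\alpha\bigl(\alpha^{3n}-(-1)^n\beta^{3n}\bigr)}{2} = \frac{\alpha}{2}\cdot\begin{cases} L_{3n}, & n \text{ odd};\\ \sqrt{5}\,F_{3n}, & n \text{ even}.\end{cases}
\end{equation*}
The analogous formula for $\sqrt{\beta}\,U_{2n-1}(\sqrt{\beta})$ follows by the $\alpha\leftrightarrow\beta$ symmetry (which flips the sign of $F_{3n}$).

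Substituting $x=\sqrt{\alpha}$ into \eqref{main_id7} and absorbing the factor $-1/4$ coming from $(-4)^k = -4\cdot(-4)^{k-1}$ produces a closed form for $\sum_{k=1}^{n}(-4)^{k-1}\frac{k}{n+k}\binom{n+k}{n-k}\alpha^{k}$; the same procedure with $\sqrt{\beta}$ handles the $\beta$-sum. Multiplying by $\alpha^t$ and $\beta^t$ and then forming the sum (for $L_{k+t}$) or the $\sqrt{5}$-scaled difference (for $F_{k+t}$), and finally collapsing $\alpha^{t+1}\pm\beta^{t+1}$ into $L_{t+1}$ or $\sqrt{5}\,F_{t+1}$, assembles the two stated identities.

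The main obstacle is careful bookkeeping of the $(-1)^n$ factors coming from $\alpha^{-3n}=(-1)^n\beta^{3n}$ together with the parity of $n$: for $n$ odd the $\alpha$- and $\beta$-contributions pair symmetrically and reproduce products of the form $L_{t+1}\,L_{3n}$ and $F_{t+1}\,L_{3n}$, whereas for $n$ even they pair antisymmetrically and yield $F_{t+1}\,F_{3n}$ and $L_{t+1}\,F_{3n}$. Keeping the signs straight through the $(-4)^{k}\to(-4)^{k-1}$ renormalisation, and tracking which Binet combination arises on each parity branch, is the step most prone to arithmetic error.
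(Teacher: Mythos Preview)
Your approach is correct and essentially identical to the paper's: both set $x=\sqrt{\alpha}$ (and, by symmetry, $x=\sqrt{\beta}$) in \eqref{main_id7}, evaluate $U_{2n-1}(\sqrt{\alpha})$ via the Binet-type formula to obtain the parity-dependent closed form, and then take the appropriate $\alpha^t$-, $\beta^t$-weighted combinations. Your explicit derivation of $x\pm\sqrt{x^2-1}=\alpha^{\pm 3/2}$ is exactly the computation the paper leaves implicit when it simply records $U_{2n-1}(\sqrt{\alpha})$.
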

\begin{proof}
Set $x=\sqrt{\alpha}$ in~\eqref{main_id7} and use 
\begin{equation*}
U_{2n - 1}\big( \sqrt\alpha\big) =\begin{cases}
\frac{\sqrt\alpha }2 L_{3n},&\text{\rm $n$ odd};  \\[2pt] 
\frac{\sqrt{5\alpha} }2 F_{3n}, &\text{\rm $n$  even}, 
\end{cases}
\end{equation*}
to obtain
\begin{equation*}
\sum_{k = 1}^n ( - 4)^{k - 1}{\frac{k}{{n + k}}\binom{n + k}{n - k}\beta^{k + t} }  = -\frac{\beta^{t + 1}}2\cdot\begin{cases}
  L_{3n},&\text{$n$ odd};  \\ 
\sqrt 5  F_{3n},&\text{$n$ even}; 
 \end{cases}
\end{equation*}
from which the results follow.
\end{proof}
%
\begin{theorem}
If $n$ is a non-negative integer and $p$ is a non-zero integer, then
\begin{equation*}\label{eq.v6rb7fk}
\sum_{k = 1}^n {( - 1)^{n-k} \frac{k}{{n + k}}\binom{n + k}{n - k}\left( {\frac{{2L_{2p} }}{\sqrt5 F_{2p}}} \right)} ^{2k}  = ( - 1)^{p} \frac{L_{2p}(L_p^{4n}  - 5^{2n} F_p^{4n} )}{4 (\sqrt5 F_{2p})^{2n}}.
\end{equation*}
\end{theorem}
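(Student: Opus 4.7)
The plan is to apply identity \eqref{main_id7} at the single point $x = L_{2p}/(\sqrt{5}\,F_{2p})$ and then evaluate $xU_{2n-1}(x)$ by means of the Binet-like formula for the Chebyshev polynomial of the second kind. First, observe that
\[
(-1)^{n-k}\left(\frac{2L_{2p}}{\sqrt{5}\,F_{2p}}\right)^{2k} = (-1)^n(-4)^k x^{2k},
\]
so that the left-hand side of the theorem becomes $(-1)^n\sum_{k=1}^{n}(-4)^k\frac{k}{n+k}\binom{n+k}{n-k}x^{2k}$, which by \eqref{main_id7} collapses to $(-1)^n\cdot(-1)^n\,xU_{2n-1}(x)=xU_{2n-1}(x)$.

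Second, I would compute $x^2-1$. Since the identity $L_m^2-5F_m^2=4(-1)^m$ with $m=2p$ gives $L_{2p}^2-5F_{2p}^2=4$, we have $x^2-1=4/(5F_{2p}^2)$, and a convenient branch is $\sqrt{x^2-1}=2/(\sqrt{5}\,F_{2p})$. Consequently $x\pm\sqrt{x^2-1}=(L_{2p}\pm 2)/(\sqrt{5}\,F_{2p})$ and $x/(2\sqrt{x^2-1})=L_{2p}/4$. Substituting into
\[
U_{2n-1}(x)=\frac{(x+\sqrt{x^2-1})^{2n}-(x-\sqrt{x^2-1})^{2n}}{2\sqrt{x^2-1}}
\]
produces
\[
xU_{2n-1}(x)=\frac{L_{2p}}{4(\sqrt{5}\,F_{2p})^{2n}}\Big((L_{2p}+2)^{2n}-(L_{2p}-2)^{2n}\Big).
\]

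Finally, I would invoke \eqref{eq.r1j2cpo} and split by the parity of $p$. For $p$ odd, $L_{2p}+2=5F_p^2$ and $L_{2p}-2=L_p^2$, so the bracket equals $5^{2n}F_p^{4n}-L_p^{4n}=-(L_p^{4n}-5^{2n}F_p^{4n})$, absorbing the sign into the factor $(-1)^p=-1$. For $p$ even the roles swap and the bracket is directly $L_p^{4n}-5^{2n}F_p^{4n}$, matching $(-1)^p=1$. In both cases we obtain exactly the stated right-hand side. The only real obstacle is bookkeeping the sign in these two parity cases (together with a harmless choice of square-root branch); once \eqref{main_id7}, the identity $L_{2p}^2-5F_{2p}^2=4$, and \eqref{eq.r1j2cpo} are in hand, the derivation is purely mechanical.
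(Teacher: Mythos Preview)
Your proof is correct and follows essentially the same route as the paper: substitute $x=L_{2p}/(\sqrt{5}\,F_{2p})$ into \eqref{main_id7} and evaluate $xU_{2n-1}(x)$ in closed form. The only cosmetic difference is that the paper quotes the ready-made evaluation \eqref{kc1e6ox} for $U_{n-1}\big(L_{2p}/(\sqrt5 F_{2p})\big)$, whereas you reconstruct it on the spot from the Binet formula together with $L_{2p}^2-5F_{2p}^2=4$ and \eqref{eq.r1j2cpo}; the two amount to the same computation.
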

\begin{proof}
Set $x=L_{2p}/(\sqrt 5F_{2p})$ in \eqref{main_id7} and use 
\begin{equation}\label{kc1e6ox}
U_{n - 1} \Big( \frac{{L_{2p} }}{\sqrt 5F_{2p}  } \Big) = \frac{{( - 1)^p(L_p^{2n}  - 5^n F_p^{2n} )}}{{4(\sqrt 5F_{2p}  )^{n - 1} }}.
\end{equation}
\end{proof}
\begin{theorem}
If $n$ is a non-negative integer and $p$ and $q$ are non-zero integers, then
\begin{equation*}
\sum_{k = 1}^n ( - 1)^{(p - q)(n-k)} {\frac{k}{{n + k}}\binom{n + k}{n - k} \left( {\frac{{F_{p + q} F_{p - q} }}{{F_p F_q }}} \right)^{2k} } = \frac{{(F_p^{4n}  - F_q^{4n} )F_{p + q} F_{p - q} }}{2\big(F_p^2  + ( - 1)^{p - q} F_q^2 \big)F_q^{2n} F_p^{2n} }.
\end{equation*}
\end{theorem}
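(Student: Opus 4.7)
The plan is to mimic the derivation of Theorem 10 (the analogous statement with $T_{n}$), but now invoking \eqref{main_id7} instead of \eqref{main_id4}. Concretely, I would set
\[
x=\frac{(-i)^{p-q+1}F_{p+q}F_{p-q}}{2F_pF_q}
\]
in \eqref{main_id7}. A direct computation using Catalan's identity $F_p^{2}-(-1)^{p-q}F_q^{2}=F_{p+q}F_{p-q}$ gives $-4x^{2}=(-1)^{p-q}\bigl(F_{p+q}F_{p-q}/(F_pF_q)\bigr)^{2}$, so that
\[
(-4)^{k}x^{2k}=(-1)^{(p-q)k}\left(\frac{F_{p+q}F_{p-q}}{F_pF_q}\right)^{\!2k}.
\]
After multiplying the resulting identity through by $(-1)^{(p-q)n}$, the left-hand side becomes exactly the sum in the theorem.

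For the right-hand side, I need to evaluate $(-1)^{n}x\,U_{2n-1}(x)$. The trick is to factor $2x$ as $a+b$ with $ab=1$, so that the Binet-like formula yields $U_{2n-1}(x)=(a^{2n}-b^{2n})/(a-b)$. Guided by the $T_n$-evaluation used in Theorem 10, I take
\[
a=i^{p-q+1}\,\frac{F_p}{F_q},\qquad b=i^{p-q+1}(-1)^{p-q+1}\,\frac{F_q}{F_p}.
\]
Checking $ab=i^{2(p-q+1)}(-1)^{p-q+1}=1$ is immediate, and $a+b=i^{p-q+1}\bigl(F_p^{2}+(-1)^{p-q+1}F_q^{2}\bigr)/(F_pF_q)=2x$ again by Catalan. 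Then
\[
a^{2n}-b^{2n}=(-1)^{n(p-q+1)}\,\frac{F_p^{4n}-F_q^{4n}}{F_p^{2n}F_q^{2n}},\qquad
a-b=i^{p-q+1}\,\frac{F_p^{2}+(-1)^{p-q}F_q^{2}}{F_pF_q}.
\]

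Substituting these into $(-1)^{n}xU_{2n-1}(x)$, the powers of $i^{p-q+1}$ cancel between $x$ and $a-b$, and the factor $(-1)^{n}\cdot(-1)^{n(p-q+1)}=(-1)^{n(p-q)}$ is exactly the sign absorbed earlier when we passed from $(-1)^{(p-q)k}$ to $(-1)^{(p-q)(n-k)}$. Combining everything leaves precisely
\[
\frac{F_{p+q}F_{p-q}\,\bigl(F_p^{4n}-F_q^{4n}\bigr)}{2\bigl(F_p^{2}+(-1)^{p-q}F_q^{2}\bigr)F_p^{2n}F_q^{2n}},
\]
as required. The main obstacle I anticipate is bookkeeping: keeping the various powers $(-1)^{n(p-q)}$, $(-1)^{(p-q)k}$, and the $i^{p-q+1}$ factors consistent at each stage. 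All the substantive algebra rests on one application of Catalan's identity (used once to identify $2x$ and once more to simplify $a-b$), so once the right $a,b$ are in hand, the remaining steps are mechanical simplifications.
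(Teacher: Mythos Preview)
Your approach is correct and essentially identical to the paper's: both substitute $x=\dfrac{(-i)^{p-q+1}F_{p+q}F_{p-q}}{2F_pF_q}$ into \eqref{main_id7}, the only difference being that the paper quotes the resulting $U_{2n-1}$-value as the ready-made formula \eqref{eq.ormac9f} while you rederive it inline via Catalan's identity and the Binet factorisation. One bookkeeping caveat: with your choice of $a,b$ one actually gets $a+b=i^{\,p-q+1}F_{p+q}F_{p-q}/(F_pF_q)=(-1)^{p-q+1}\cdot 2x$, so for even $p-q$ your $(a^{2n}-b^{2n})/(a-b)$ equals $-U_{2n-1}(x)$ rather than $U_{2n-1}(x)$; this stray sign is exactly cancelled by $(-i)^{p-q+1}/i^{\,p-q+1}=(-1)^{p-q+1}$ at the step where you say the $i$-powers ``cancel'', so the final identity is unaffected.
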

\begin{proof}
Set $x=\displaystyle\frac{(-i)^{p - q + 1}F_{p + q} F_{p - q}}{2 F_q F_p }$ in~\eqref{main_id7} and use
\begin{equation}\label{eq.ormac9f}
U_{n - 1} \left( \frac{i^{p - q + 1} F_{p + q} F_{p - q}}{2F_qF_p} \right) = \frac{F_p^{2n}  + ( - 1)^{p - q} F_q^{2n} }{F_q^{n - 1} F_p^{n - 1}\big(F_p^2  + ( - 1)^{p - q} F_q^2\big) }.
\end{equation}
\end{proof}
\begin{lemma} We have 
\begin{equation}
\begin{split}
 &\hspace{-1.5cm}(-1)^{(q+1)(n+3/2)+\lfloor q/2\rfloor+1} \alpha ^{p + q/2} \,U_{2n - 1}\!\left( {\sqrt{\frac{( - 1)^{q + 1} F_q^2}{4F_p F_{p + q}}\alpha ^{2p + q} }} \right)\label{eq.r6f98dl}\\
& = \frac{{F_{p + q}^{2n + 1} \alpha ^{qn + p}  - F_p^{2n} F_{p + q} }(-\beta)^{qn - p}+F_{p + q}^{2n} F_p \alpha ^{qn + p + q}  - F_p^{2n + 1} (-\beta) ^{qn - p - q} }{\big(F_p F_{p + q}\big)^{n-1/2}\big(F_{p + q}^2  + F_p F_{p + q} L_q  + ( - 1)^qF_p^2\big) },
\end{split}
\end{equation}
\begin{equation*}
\begin{split}
&\hspace{-1.5cm}(-1)^{(q+1)(n+5/2)+\lfloor q/2\rfloor+p} \beta ^{p + q/2}\, U_{2n - 1}\! \left( \sqrt{\frac{( - 1)^{q + 1}F_q^2 }{{4F_p F_{p + q} }}\beta ^{2p + q}} \right)\\
& = \frac{F_{p + q}^{2n + 1} \beta ^{qn + p}  - F_p^{2n} F_{p + q} (-\alpha)^{qn - p} + F_{p + q}^{2n} F_p \beta ^{qn + p + q}  - F_p^{2n + 1} (-\alpha) ^{qn - p - q} }{\big(F_pF_{p + q}\big)^{n-1/2}\big(F_{p + q}^2  + F_p F_{p + q} L_q  + ( - 1)^qF_p^2\big) },
\end{split}
\end{equation*}
\begin{equation*}
\begin{split}
&\hspace{-1.5cm}(-1)^{(q+1)(n+3/2)+\lfloor q/2\rfloor+1} \alpha ^{p + q/2}\, U_{2n - 1}\! \left( \sqrt{\frac{( - 1)^{q + 1}5 F_q^2 }{{4L_p L_{p + q} }}\alpha ^{2p + q}} \right)\\
& = \frac{L_{p + q}^{2n + 1} \alpha ^{qn + p}  - L_p^{2n} L_{p + q} (-\beta)^{qn - p} + L_{p + q}^{2n}L_p \alpha ^{qn + p + q}  - L_p^{2n + 1} (-\beta) ^{qn - p - q} }{\big(L_pL_{p + q}\big)^{n-1/2} \big(L_{p + q}^2  + L_p L_{p + q} L_q  + ( - 1)^qL_p^2\big) },\end{split}
\end{equation*}
\begin{equation*}
\begin{split}
&\hspace{-1.5cm}(-1)^{(q+1)(n+3/2)+2\lfloor q/2\rfloor+p}  \beta ^{p + q/2} \,U_{2n - 1} \!\left( \sqrt{\frac{{( - 1)^{q + 1}5F_q^2 }}{4L_p L_{p + q} }\beta ^{2p + q}}\right)\\
& = \frac{L_p^{2n+1}\beta^{qn+p}- L_{p}^2n L_{p+q}(-\alpha)^{qn - p}+ L_{p+q}^{2n} L_{p}\beta ^{qn + p+q} -L_p^{2n + 1} (-\alpha) ^{qn - p - q}}{(L_pL_{p + q})^{n-1/2}\big(L_{p + q}^2  + L_p L_{p + q} L_q  + ( - 1)^qL_p^2\big)}.
\end{split}
\end{equation*}
\end{lemma}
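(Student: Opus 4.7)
The plan is to mirror the proof of the preceding $T_{2n}$ lemma, using the Binet-like representation
\[
U_{2n-1}(x) = \frac{(x+\sqrt{x^2-1})^{2n} - (x-\sqrt{x^2-1})^{2n}}{2\sqrt{x^2-1}}
\]
in place of the one for $T_{2n}$. It suffices to establish the first identity; the second is the $\alpha\leftrightarrow\beta$ symmetric statement, and the two Lucas-type identities follow by the parallel computation starting from $x = \sqrt{5(-1)^{q+1}F_q^2\alpha^{2p+q}/(4L_pL_{p+q})}$, using the companion factorizations $L_{p+q}-\alpha^qL_p = -\sqrt{5}\beta^pF_q$ and $L_{p+q}-\beta^qL_p = \sqrt{5}\alpha^pF_q$ in place of the Fibonacci ones exploited below.

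First I set $y = 2x^2 - 1$ and invoke $(x\pm\sqrt{x^2-1})^2 = y\pm\sqrt{y^2-1}$; the proof of the preceding lemma already identifies $y+\sqrt{y^2-1} = (-1)^{q+1}\frac{F_{p+q}}{F_p}\alpha^q$ and $y-\sqrt{y^2-1} = -\frac{F_p}{F_{p+q}}\beta^q$. Substituting into the Binet form for $U_{2n-1}$ gives a numerator
\[
(-1)^{n(q+1)}F_{p+q}^{2n}\alpha^{qn} - (-1)^n F_p^{2n}\beta^{qn}
\]
over the denominator $2F_p^n F_{p+q}^n\sqrt{x^2-1}$. The factor $\sqrt{x^2-1}$ is handled via $4x\sqrt{x^2-1} = 2\sqrt{y^2-1}$, yielding $D := 4xF_pF_{p+q}\sqrt{x^2-1} = (-1)^{q+1}F_{p+q}^2\alpha^q + F_p^2\beta^q$. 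Multiplying by the $\alpha\leftrightarrow\beta$ conjugate $\bar D = (-1)^{q+1}F_{p+q}^2\beta^q + F_p^2\alpha^q$ and invoking $L_{2q} = L_q^2 - 2(-1)^q$ together with $F_{p+q}-\alpha^qF_p = \beta^pF_q$ and $F_{p+q}-\beta^qF_p = \alpha^pF_q$ (both immediate from Binet), one obtains
\[
D\bar D = (-1)^{p+q}F_q^2\bigl(F_{p+q}^2 + F_pF_{p+q}L_q + (-1)^qF_p^2\bigr),
\]
which supplies the denominator factor appearing in the assertion.

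The remaining computation expands $\bar D$ against the two-term numerator and produces the four-term numerator stated in the lemma. A clean check comes from observing that those four terms factor as $(F_{p+q}+F_p\alpha^q)\bigl[F_{p+q}^{2n}\alpha^{qn+p} - F_p^{2n}(-\beta)^{qn-p}\bigr]$, using $(-\beta)^{-q} = \alpha^q$ (a consequence of $\alpha\beta = -1$); this matches the factorization $F_{p+q}^2+F_pF_{p+q}L_q+(-1)^qF_p^2 = (F_{p+q}+F_p\alpha^q)(F_{p+q}+F_p\beta^q)$ after one factor cancels. The half-integer exponent $(F_pF_{p+q})^{n-1/2}$, the term $\alpha^{p+q/2}$, and the sign $(-1)^{(q+1)(n+3/2)+\lfloor q/2\rfloor+1}$ in the LHS prefactor all originate from the relation $2x = i^{q+1}F_q\alpha^{p+q/2}/\sqrt{F_pF_{p+q}}$ read off from the definition of $x$, after a consistent choice of branch for $\sqrt{(-1)^{q+1}}$ and for $\sqrt{\alpha^{2p+q}}$ in each parity of $q$. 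I expect the sign bookkeeping --- keeping these branch choices consistent across even and odd $q$ --- to be the main obstacle; the algebraic core of the argument is entirely routine given the preceding lemma.
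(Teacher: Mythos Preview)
The paper states this lemma without proof, so there is no explicit argument to compare against; the intended derivation is presumably the direct computation via the Binet-like formula for $U_{2n-1}$, exactly as you outline. Your algebraic core is correct: the identification of $y\pm\sqrt{y^2-1}$ from the $T_{2n}$ lemma, the computation of $D$ and of $D\bar D=(-1)^{p+q}F_q^2\bigl(F_{p+q}^2+F_pF_{p+q}L_q+(-1)^qF_p^2\bigr)$ via the factorization $(F_{p+q}-\alpha^qF_p)(F_{p+q}-\beta^qF_p)=(-1)^pF_q^2$, and the four-term numerator expansion all check out. Your honest remark that the residual work is the branch/sign bookkeeping in tracking $\sqrt{(-1)^{q+1}}$ and $\alpha^{p+q/2}$ across parities of $q$ is accurate; that bookkeeping is tedious but mechanical, and nothing in the paper suggests a slicker route.
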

\begin{theorem}\label{thm.kyu2oro}
If $n$ is a positive integer and $p$, $q$ and $t$ are any integers, then
\begin{equation}
\begin{split}
\sum_{k = 1}^n & ( - 1)^{q(n-k)} \frac{k}{{n + k}}\binom{n + k}{n - k}F_p^{n - k} F_{p + q}^{n - k} F_q^{2k-1} L_{(2p + q)k + t} \\ 
& = \frac{ F_{p + q}^{2n}\big(F_{p + q} L_{ p + t + qn} + F_p L_{ q + p + t + qn}\big)         - F_p^{2n}\big( F_{p + q} L_{p + t - qn}   + F_p L_{ q + p + t -  qn}\big)}{2\big({F_{p + q}^2  + F_p F_{p + q} L_q  + ( - 1)^q F_p^2 }\big)}, 
\label{eq.tpynnr7}
\end{split}
\end{equation}
\begin{equation}
\begin{split}
\sum_{k = 1}^n & ( - 1)^{q(n-k)} \frac{k}{n + k}\binom{n + k}{n - k}F_p^{n - k} F_{p + q}^{n - k} F_q^{2k-1} F_{(2p + q)k + t}\\ 
& = \frac{F_{p +q}^{2n}\big(F_{p + q} F_{ p + t +qn}   + F_p F_{q + p + t+qn }\big) - F_p^{2n}\big( F_{p + q} F_{p+t -qn}  + F_pF_{p + q + t - qn}\big) }{2\big(F_{p + q}^2  + F_p F_{p + q} L_q  + ( - 1)^q F_p^2 \big)}, \label{eq.gyr3ptz}
\end{split}
\end{equation}
\begin{equation}
\begin{split}
\sum_{k = 1}^n &( - 1)^{q(n-k)} \frac{k}{{n + k}}\binom{n + k}{n - k}L_p^{n - k} L_{p + q}^{n - k} F_q^{2k-1} 5^kF_{(2p + q)k + t} \\ 
& = \frac{ L_{p + q}^{2n}\big(L_{p + q} L_{ p + t + qn}   + L_p L_{q + p + t+qn}\big) -  L_p^{2n}\big( L_{p + q} L_{p+t -qn} + L_p L_{p + q + t - qn}\big) }{2\big(L_{p + q}^2  + L_p L_{p + q} L_q  + ( - 1)^q L_p^2 \big)},
\end{split}\label{eq.z2nmrct}
\end{equation}
\begin{equation}
\begin{split}
\sum_{k = 1}^n & ( - 1)^{q(n-k)} \frac{k}{{n + k}}\binom{n + k}{n - k}L_p^{n - k} L_{p + q}^{n - k} F_q^{2k-1} 5^{k - 1}L_{(2p + q)k + t} \\ 
& = \frac{L_{p + q}^{2n}\big(L_{p + q} F_{qn + p + t}  +  L_p F_{ q + p + t + qn}\big)  - L_p^{2n}\big( L_{p + q} F_{p+t-qn}  + L_p F_{p + q + t - qn }\big)}{{2\big(L_{p + q}^2  + L_p L_{p + q} L_q  + ( - 1)^q L_p^2 \big)}}.\label{eq.rankav6}
\end{split}
\end{equation}
\end{theorem}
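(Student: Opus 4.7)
The plan is to mirror the argument used to prove Theorem~\ref{thm.awwyoac}, replacing \eqref{main_id4} by its $U_n$-analogue \eqref{main_id7} and using the four evaluations of $U_{2n-1}$ provided in the preceding lemma in place of \eqref{eq.in8cl0x}. Each of the four identities corresponds to one substitution: \eqref{eq.tpynnr7} and \eqref{eq.gyr3ptz} come from
\[
x=\sqrt{\frac{(-1)^{q+1}F_q^2}{4F_pF_{p+q}}\,\alpha^{2p+q}}
\]
(and its $\beta$-counterpart) in \eqref{main_id7}, while \eqref{eq.z2nmrct} and \eqref{eq.rankav6} come from
\[
x=\sqrt{\frac{5(-1)^{q+1}F_q^2}{4L_pL_{p+q}}\,\alpha^{2p+q}}
\]
and its $\beta$-counterpart.

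For the first pair, the factor $(-4)^k x^{2k}$ on the left-hand side of \eqref{main_id7} collapses to $(-1)^{qk}F_q^{2k}\alpha^{(2p+q)k}/(F_p^kF_{p+q}^k)$. Multiplying through by $(-1)^{qn}F_p^nF_{p+q}^n\alpha^t/F_q$ converts the $k$th summand into exactly $(-1)^{q(n-k)}\frac{k}{n+k}\binom{n+k}{n-k}F_p^{n-k}F_{p+q}^{n-k}F_q^{2k-1}\alpha^{(2p+q)k+t}$. On the right-hand side of \eqref{main_id7} we have $(-1)^n x\,U_{2n-1}(x)$; since $x$ equals a constant times $\alpha^{p+q/2}$, this is (up to the normalizations just introduced) precisely the quantity evaluated in \eqref{eq.r6f98dl}. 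Repeating with $\beta$ in place of $\alpha$ and multiplying by $\beta^t$ gives the companion identity from the second line of the lemma. Adding the two $\alpha$- and $\beta$-identities and applying Binet's formulas \eqref{binet} converts the four terms of the lemma numerator into the Lucas numbers $L_{p+t+qn}$, $L_{q+p+t+qn}$, $L_{p+t-qn}$, $L_{p+q+t-qn}$ that appear in \eqref{eq.tpynnr7}; subtracting the two and dividing by $\sqrt 5$ produces the Fibonacci version \eqref{eq.gyr3ptz}. The denominator $F_{p+q}^2+F_pF_{p+q}L_q+(-1)^qF_p^2$ is inherited verbatim from \eqref{eq.r6f98dl}.

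The second pair of identities is treated identically, now invoking the third and fourth lines of the lemma. The only new feature is that $x^{2k}$ now contributes an additional factor $5^k$, which is the origin of the $5^k$ appearing in \eqref{eq.z2nmrct}; in \eqref{eq.rankav6} one power of $5$ is absorbed into the $\sqrt 5$ that arises when a Binet-type combination is recast in terms of Lucas numbers, leaving the factor $5^{k-1}$. The denominator $L_{p+q}^2+L_pL_{p+q}L_q+(-1)^qL_p^2$ again comes straight from the lemma.

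The main obstacle is sign and normalization bookkeeping: the implicit factors of $i$ arising from $\sqrt{-1}$ when $(-1)^{q+1}$ is negative (which the lemma absorbs into the prefactors $(-1)^{(q+1)(n+3/2)+\lfloor q/2\rfloor+\ast}$), the conventions $(-\beta)^{qn-p}$ and $(-\alpha)^{qn-p}$, and the identity $\beta^m=(-1)^m\alpha^{-m}$ needed to collapse ``mixed'' terms such as $\beta^{qn-p}\alpha^t+\alpha^{qn-p}\beta^t$ into a single Lucas number, namely $(-1)^{qn-p}L_{p+t-qn}$. The cleanest route is to keep everything in terms of $\alpha$ and $\beta$ until the very last step, using $\alpha\beta=-1$ to reconcile the mixed terms, and only then combine the $\alpha$- and $\beta$-identities additively (to expose Lucas numbers) or subtractively, with division by $\sqrt 5$ (to expose Fibonacci numbers).
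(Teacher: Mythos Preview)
Your proposal is correct and follows essentially the same approach as the paper: substitute the two indicated values of $x$ into \eqref{main_id7}, invoke the four $U_{2n-1}$ evaluations from the preceding lemma (specifically \eqref{eq.r6f98dl} and its companions), and then combine the resulting $\alpha$- and $\beta$-identities via Binet's formulas to separate the Lucas and Fibonacci cases. The paper's own proof is terser and only displays the intermediate $\alpha$-identity explicitly, but your more detailed account of the sign and normalization bookkeeping is exactly the computation the paper is leaving to the reader.
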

\begin{proof} In~\eqref{main_id7} set
$\displaystyle 
x=\sqrt{ \frac{( - 1)^{q + 1}F_q^2 }{{4F_p F_{p + q} }}\alpha ^{2p + q}}
$
and use~\eqref{eq.r6f98dl} to obtain
\begin{equation*}
\begin{split}
&2\sum_{k = 1}^n {( - 1)^{qk} \frac{k}{{n + k}}\binom{n + k}{n - k}F_p^{n - k} F_{p + q}^{n - k} F_q^{2k}\, \alpha^{(2p + q)k + t} } = \frac{{F_q }}{2\big(F_{p + q}^2  + F_p F_{p + q} L_q  + ( - 1)^q F_p^2\big)} \\
&\qquad\qquad\quad\times\!\big( {( - 1)^{nq} F_{p + q}^{2n + 1}\, 2\alpha^{qn + p + t}  - ( - 1)^{p + t} F_p^{2n} F_{p + q}\, 2\beta^{qn - p - t} } \\ 
&\qquad\qquad\qquad\quad  + ( - 1)^{nq} F_{p + q}^{2n} F_p\, 2\alpha^{qn + q + p + t}  - ( - 1)^{p + q + t} F_p^{2n + 1}\, 2\beta^{qn - p - q - t} \bigr),
\end{split}
\end{equation*}
from which~\eqref{eq.tpynnr7} and~\eqref{eq.gyr3ptz} follow. The proof of~\eqref{eq.z2nmrct} and~\eqref{eq.rankav6} is similar; in~\eqref{main_id7}, set
$\displaystyle
x\!\!~=~\!\!\sqrt{ \frac{{( - 1)^{q + 1}5F_q^2 }}{{4L_p L_{p + q} }}\alpha ^{2p + q} }.
$
\end{proof}

\section{Some binomial-coefficient weighted Fibonacci and \break Lucas sums}

In this section we present some Fibonacci and Lucas sums having binomial coefficients as weights. 
\begin{theorem}
If $n$ is a positive integer and $p$ is an integer, then
\begin{align*}
&\sum_{k = 0}^n (- 1)^{(p-1)(n-k)}\binom{n + k}{n - k}  L_p^{2k} = \frac{F_{(2n + 1)p}}{F_p}, \\
&\sum_{k = 0}^n ( - 1)^{p(n-k)} \binom{n + k}{n - k}  5^k F_p^{2k} = \frac{L_{(2n + 1)p}}{L_p}.
\end{align*}
\end{theorem}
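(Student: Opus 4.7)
The plan is to first establish the Chebyshev expansion
\begin{equation*}
U_{2n}(x)=\sum_{k=0}^n(-1)^{n-k}\binom{n+k}{n-k}(2x)^{2k}.
\end{equation*}
This will follow from the standard polynomial representation $U_m(x)=\sum_{j=0}^{\lfloor m/2\rfloor}(-1)^j\binom{m-j}{j}(2x)^{m-2j}$ by specializing to $m=2n$ and reindexing via $j=n-k$, which converts $\binom{2n-j}{j}$ into $\binom{n+k}{n-k}$ and produces the sign $(-1)^{n-k}$.

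With this identity in hand, both claims reduce to choosing $x$ so that $(2x)^{2k}$ reproduces the desired weight $L_p^{2k}$ or $5^kF_p^{2k}$, and then invoking the evaluations of $U_{2n}$ already recorded in Lemma~\ref{Lem.hspwn3b}. For the first identity, when $p$ is even the exponent sign $(-1)^{(p-1)(n-k)}$ equals $(-1)^{n-k}$, so substituting $x=L_p/2$ yields the assertion directly via $U_{2n}(L_p/2)=F_{p(2n+1)}/F_p$. When $p$ is odd the sign is trivial, and I substitute $x=iL_p/2$: now $(2x)^{2k}=(-1)^kL_p^{2k}$ so the Chebyshev sign $(-1)^{n-k}$ combines to a global $(-1)^n$ on the left, and Lemma~\ref{Lem.hspwn3b} gives $U_{2n}(iL_p/2)=i^{2n}F_{p(2n+1)}/F_p=(-1)^nF_{p(2n+1)}/F_p$; the factors of $(-1)^n$ cancel, leaving the stated formula.

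For the second identity I use $5^kF_p^{2k}=(\sqrt 5F_p)^{2k}$ and proceed analogously. When $p$ is odd the sign $(-1)^{p(n-k)}=(-1)^{n-k}$ already matches, so $x=\sqrt5F_p/2$ together with $U_{2n}(\sqrt5F_p/2)=L_{p(2n+1)}/L_p$ (valid because $2n$ is even) closes the case. When $p$ is even the sign is trivial, and I substitute $x=i\sqrt5F_p/2$; the induced $(-1)^k$ merges with $(-1)^{n-k}$ to a global $(-1)^n$, which cancels the corresponding $(-1)^n$ in $U_{2n}(i\sqrt5F_p/2)=(-1)^nL_{p(2n+1)}/L_p$ from Lemma~\ref{Lem.hspwn3b}.

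No step looks genuinely difficult. The only real obstacle is bookkeeping of the four parity subcases so that the two stated formulas emerge uniformly; everything else is a direct specialization once the Chebyshev expansion of $U_{2n}$ is in hand.
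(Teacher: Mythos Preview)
Your argument is correct. The paper proves this theorem differently: it adds each identity of Theorem~\ref{thm_xxx} (the $\tfrac{n}{n+k}$-weighted sums evaluating to $\tfrac12 L_{2pn}$) to the corresponding identity of Theorem~\ref{thm.nl0qyn5} (the $\tfrac{k}{n+k}$-weighted sums, multiplied by $L_p$ or $\sqrt5 F_p$), so that $\tfrac{n}{n+k}+\tfrac{k}{n+k}=1$ collapses the left sides into plain $\binom{n+k}{n-k}$ sums; the right sides are then combined via $L_rF_s+L_sF_r=2F_{r+s}$ and $L_rL_s+5F_rF_s=2L_{r+s}$. Your route bypasses this by invoking the classical expansion $U_{2n}(x)=\sum_{k=0}^n(-1)^{n-k}\binom{n+k}{n-k}(2x)^{2k}$ directly and specializing via Lemma~\ref{Lem.hspwn3b}. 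This is precisely the shortcut the paper itself points out a few lines later (Lemma~\ref{lem.x6c6bl9}, equation~\eqref{eq.f8j08d9}), though it does not use it for this particular theorem. Your approach is more self-contained; the paper's emphasizes how the three families of sums ($\tfrac{n}{n+k}$, $\tfrac{k}{n+k}$, and unweighted) fit together.
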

\begin{proof}
Addition of each identity stated in Theorem~\ref{thm_xxx} and its counterpart in Theorem~\eqref{thm.nl0qyn5} while making use of
$L_rF_s + L_sF_r = 2F_{r + s}$ and $L_rL_s + 5F_rF_s = 2L_{r + s}$.
\end{proof}

The identities stated below in Theorem~\ref{thm.kihhjta} follow immediately upon addition of each identity in Theorem~\ref{thm.awwyoac} to the respective corresponding identity in Theorem~\ref{thm.kyu2oro}.
\begin{theorem}\label{thm.kihhjta}
If $n$ is a non-negative integer and $p$, $q$ and $t$ are any integers, then
\begin{align*} 
\sum_{k = 0}^n &( - 1)^{q(n-k)} \binom{n + k}{n - k}F_p^{n - k} F_{p + q}^{n - k} F_q^{2k} L_{(2p + q)k + t} = \frac{1}{2}\left(F_{p + q}^{2n} L_{t+qn}  + F_p^{2n} L_{t-qn}\right)\\
& +\frac{F_q\big( F_{p + q}^{2n}( F_{p + q}L_{ p + t+qn }  + F_p L_{q + p + t +qn}) - F_p^{2n}(F_{p + q} L_{p+t- qn}   +  F_p L_{q + p + t -qn}) \big)}{{2\big(F_{p + q}^2  + F_p F_{p + q} L_q  + ( - 1)^q F_p^2\big) }},\\
\sum_{k = 0}^n &( - 1)^{q(n-k)} \binom{n + k}{n - k}F_p^{n - k} F_{p + q}^{n - k} F_q^{2k} F_{(2p + q)k + t} = \frac{1}{2}\left(F_{p + q}^{2n} F_{t+qn}  - F_p^{2n} F_{t-qn}\right)\\
&+\frac{F_q\big( F_{p + q}^{2n}(F_{p + q} F_{ p + t+qn}+ F_p F_{ q + p + t +qn})   - F_p^{2n}(F_{p + q} F_{t+p - qn}   + F_p F_{q + p + t  - qn}) \big)}{2\big(F_{p + q}^2  + F_p F_{p + q} L_q  + ( - 1)^q F_p^2\big)},\label{eq.htvwhw2}\\
\sum_{k = 0}^n & ( - 1)^{q(n-k)} \binom{n + k}{n - k}L_p^{n - k} L_{p + q}^{n - k} F_q^{2k} 5^kF_{(2p + q)k + t}= \frac{1}{2}\left(L_{p + q}^{2n} F_{t+qn}  + L_p^{2n}F_{t-qn}\right)\\
&+\frac{F_q\big( L_{p + q}^{2n}(L_{p + q} L_{ p + t+qn} + L_p L_{q + p + t+qn}) - L_p^{2n}(L_{p + q} L_{p + t- qn}   + L_pL_{q + p + t- qn}) \big)} {2\big(L_{p + q}^2  + L_p L_{p + q} L_q  + ( - 1)^q L_p^2\big) },\\
\sum_{k = 0}^n &( - 1)^{q(n-k)} \binom{n + k}{n - k}L_p^{n - k} L_{p + q}^{n - k} F_q^{2k} 5^kL_{(2p + q)k + t} = \frac{1}{2}\left(L_{p + q}^{2n} L_{t+qn}  + L_p^{2n} L_{t-qn}\right)\\
&+\frac{5F_q\left(L_{p + q}^{2n + 1}(L_{p + q} F_{ p + t+qn } + L_p F_{ q + p + t+qn})  - L_p^{2n}(L_{p + q} F_{ p + t-qn }  + L_p F_{q + p + t- qn})  \right)}{2(L_{p + q}^2  + L_p L_{p + q} L_q  + ( - 1)^q L_p^2)}.
\end{align*}
\end{theorem}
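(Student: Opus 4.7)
The approach is to use the trivial algebraic decomposition
\[
\binom{n+k}{n-k} \;=\; \frac{n}{n+k}\binom{n+k}{n-k} \;+\; \frac{k}{n+k}\binom{n+k}{n-k},
\]
valid for $n\ge 1$ and $k\ge 0$ (with the $k=0$ term contributing only to the first piece, since $k/(n+k)$ vanishes there). Applying this to each left-hand side of Theorem~\ref{thm.kihhjta} splits it, summand by summand, into a piece weighted by $n/(n+k)$ and a piece weighted by $k/(n+k)$.

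The first piece is then precisely the left-hand side of the corresponding identity in Theorem~\ref{thm.awwyoac} (equations~\eqref{eq.sqpsz3a}--\eqref{eq.h4xq8wt}), and so contributes the $\tfrac12(\cdots)$ portion of the right-hand side claimed in Theorem~\ref{thm.kihhjta}. The second piece, since it vanishes at $k=0$, runs effectively from $k=1$ and, up to a single prefactor, matches the left-hand side of the corresponding identity in Theorem~\ref{thm.kyu2oro} (equations~\eqref{eq.tpynnr7}--\eqref{eq.rankav6}). The only bookkeeping discrepancy is that Theorem~\ref{thm.kyu2oro} carries $F_q^{2k-1}$ (and in two cases $5^{k-1}$) where Theorem~\ref{thm.kihhjta} carries $F_q^{2k}$ (respectively $5^k$); one therefore pulls out one factor of $F_q$ in the first three cases and $5F_q$ in the fourth, so that the remainder exactly matches the summand appearing in Theorem~\ref{thm.kyu2oro}.

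Adding the two pieces then yields, in each of the four cases, the asserted identity: the $\tfrac12(\cdots)$ term from Theorem~\ref{thm.awwyoac} plus $F_q$ (or $5F_q$) times the rational expression from Theorem~\ref{thm.kyu2oro}, which is exactly the displayed right-hand side. There is no real obstacle; the main point is the purely formal observation that the two fractions $n/(n+k)$ and $k/(n+k)$ sum to $1$, so that the Theorems~\ref{thm.awwyoac} and~\ref{thm.kyu2oro} are complementary halves of the unweighted sum. The only care needed is in aligning the exponents of $F_q$ and $5$ when combining the two right-hand sides, and verifying the prefactor is consistent in all four cases.
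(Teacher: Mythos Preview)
Your proposal is correct and is exactly the paper's own argument: the paper simply states that the identities ``follow immediately upon addition of each identity in Theorem~\ref{thm.awwyoac} to the respective corresponding identity in Theorem~\ref{thm.kyu2oro},'' which is precisely your decomposition $\binom{n+k}{n-k}=\frac{n}{n+k}\binom{n+k}{n-k}+\frac{k}{n+k}\binom{n+k}{n-k}$ together with the $F_q$ (or $5F_q$) bookkeeping you describe.
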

\begin{example}
If $n$ is a non-negative integer and $t$ is any integer, then
\begin{gather*}
\sum_{k = 0}^n {( - 1)^{n-k} \binom{n + k}{n - k}L_{3k + t} }  = \begin{cases}
 L_{t + n}+  L_{t + 1} L_n,&\text{\rm $n$ odd};  \\ 
  L_{t + n}  + 5F_{t + 1} F_n,&\text{\rm  $n$ even};  \\ 
 \end{cases}\\
\sum_{k = 0}^n {( - 1)^{n-k} \binom{n + k}{n - k}F_{3k + t} }  = \begin{cases}
 F_{t + n} + F_{t + 1} L_n,&\text{\rm  $n$ odd};  \\ 
  F_{t + n}  + L_{t + 1} F_n,&\text{\rm  $n$ even}. 
 \end{cases}
\end{gather*}
In particular,
\begin{gather*}
\sum_{k = 0}^n {( - 1)^{n-k} \binom{n + k}{n - k}L_{3k} }  = \begin{cases}
 2L_n ,&\text{\rm $n$ odd};  \\ 
  2L_{n + 1},&\text{\rm $n$ even};  \\ 
 \end{cases}\\
\sum_{k = 0}^n {( - 1)^{n-k} \binom{n + k}{n - k}F_{3k} }  = \begin{cases}
 2F_{n + 1},&\text{\rm $n$  odd};  \\ 
  2F_n,&\text{\rm $n$ even};  
 \end{cases}\\
\sum_{k = 0}^n {( - 1)^{n-k} \binom{n + k}{n - k}L_{3k - 1} }  = \begin{cases}
 L_{n + 2} ,&\text{\rm $n$ odd};  \\ 
  L_{n - 1},&\text{\rm $n$ even};  \\ 
 \end{cases}\\
\sum_{k = 0}^n {( - 1)^{n-k} \binom{n + k}{n - k}F_{3k - 1} }  = \begin{cases}
 F_{n - 1},&\text{\rm$n$ odd};  \\ 
  F_{n + 2},&\text{\rm$n$ even}.  
 \end{cases}
\end{gather*}
\end{example}
\begin{example}
If $n$ is a non-negative integer and $t$ is any integer, then
\begin{gather*}
\sum_{k = 0}^n {( -2)^{n-k} \binom{n + k}{n - k} L_{5k + t} }  = 2^{2n + 1} F_{ t +n +  1} - F_{t-n},\\
\sum_{k = 0}^n {( - 2)^{n-k} \binom{n + k}{n - k} F_{5k + t} }  =\frac15\left( 2^{2n + 1} L_{ t +n+ 1}- L_{t- n}\right)\!.
\end{gather*}
\end{example}

The identities in this section and many similar results can be obtained directly from Lemma~\ref{lem.x6c6bl9} which is a consequence of~\eqref{eq.w83lskz}, \eqref{main_id4} and~\eqref{main_id7}.
\begin{lemma}\label{lem.x6c6bl9}
If $n$ is a non-negative integer and $x$ is a complex variable, then
\begin{equation}\label{eq.f8j08d9}
\sum_{k=0}^n{(-4)^{k}\binom{n + k}{n - k}x^{2k}}=(-1)^nU_{2n}(x).
\end{equation}
\end{lemma}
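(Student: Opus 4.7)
The plan is to combine the two sum evaluations from Sections 2 and 3 using the basic identity
$$\frac{n}{n+k}\binom{n+k}{n-k}+\frac{k}{n+k}\binom{n+k}{n-k}=\binom{n+k}{n-k},$$
which is valid because $\tfrac{n}{n+k}+\tfrac{k}{n+k}=1$. Multiplying through by $(-4)^k x^{2k}$ and summing from $k=0$ to $n$ splits the target sum into exactly the two sums appearing in \eqref{main_id4} and \eqref{main_id7} (the $k=0$ term of the second piece vanishes, so extending its lower index from $1$ to $0$ is harmless).

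Applying \eqref{main_id4} to the first piece gives $(-1)^n T_{2n}(x)$ and applying \eqref{main_id7} to the second piece gives $(-1)^n x\, U_{2n-1}(x)$. Adding these yields
$$\sum_{k=0}^n(-4)^k\binom{n+k}{n-k}x^{2k}=(-1)^n\bigl(T_{2n}(x)+x\,U_{2n-1}(x)\bigr).$$

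To finish, I invoke \eqref{eq.w83lskz}, namely $U_{m+1}(x)=xU_m(x)+T_{m+1}(x)$, specialized at $m=2n-1$. This collapses $T_{2n}(x)+x\,U_{2n-1}(x)$ to $U_{2n}(x)$, giving the claimed formula. There is really no obstacle here; the only small sanity check is the boundary case $n=0$, where both sides equal $1$ (with the convention $U_{-1}(x)=0$ making the contribution from \eqref{main_id7} vanish), so the identity holds for every non-negative integer $n$.
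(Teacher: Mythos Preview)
Your proof is correct and follows exactly the approach the paper indicates: the paper states that Lemma~\ref{lem.x6c6bl9} ``is a consequence of~\eqref{eq.w83lskz}, \eqref{main_id4} and~\eqref{main_id7}'' without spelling out the details, and your argument supplies precisely those details via the decomposition $\tfrac{n}{n+k}+\tfrac{k}{n+k}=1$ and the Chebyshev relation $U_{2n}(x)=xU_{2n-1}(x)+T_{2n}(x)$.
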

\begin{theorem}
If $n$ is a non-negative integer and $p$ is a non-zero integer, then
\begin{equation*}
\sum_{k = 0}^n {( - 1)^{k} \binom{n + k}{n - k}\left( {\frac{2L_{2p}}{\sqrt5F_{p}}} \right)} ^{2k} = ( - 1)^{n-p}\,\frac{L_p^{4n + 2}  - 5^{2n + 1} F_p^{4n + 2}}{4\cdot5^n F_{2p}^{2n}  }.
\end{equation*}
\end{theorem}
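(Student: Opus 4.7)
The plan is to invoke Lemma \ref{lem.x6c6bl9}, the workhorse of this section. The observation
\[
(-1)^k\left(\frac{2L_{2p}}{\sqrt{5}F_p}\right)^{\!2k} = (-4)^k\left(\frac{L_{2p}}{\sqrt{5}F_p}\right)^{\!2k}
\]
shows that substituting $x = L_{2p}/(\sqrt{5}F_p)$ directly into \eqref{eq.f8j08d9} collapses the left-hand side to $(-1)^n U_{2n}(L_{2p}/(\sqrt{5}F_p))$. The theorem is thereby reduced to producing a closed form for this single Chebyshev value.

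For that evaluation, I would follow the template of the preceding theorem in this section, whose argument $L_{2p}/(\sqrt{5}F_{2p})$ is dispatched by identity \eqref{kc1e6ox}. Setting $a = x+\sqrt{x^2-1}$ and $b = x-\sqrt{x^2-1}$ with $x = L_{2p}/(\sqrt{5}F_p)$, the Binet-like formula yields $U_{2n}(x) = (a^{2n+1}-b^{2n+1})/(a-b)$, and $x^2-1 = (L_{2p}^2-5F_p^2)/(5F_p^2)$ is simplified via $L_{2p} = L_p^2-2(-1)^p$ together with $L_p^2-5F_p^2 = 4(-1)^p$. Regrouping the resulting expansion of $a^{2n+1}-b^{2n+1}$ by means of the factorization
\[
L_p^{4n+2}-5^{2n+1}F_p^{4n+2} = (L_p^2-5F_p^2)\sum_{j=0}^{2n}L_p^{4n-2j}\,5^j F_p^{2j} = 4(-1)^p\sum_{j=0}^{2n}L_p^{4n-2j}\,5^j F_p^{2j}
\]
and using $F_{2p} = F_pL_p$ delivers the stated right-hand side $(-1)^{n-p}(L_p^{4n+2}-5^{2n+1}F_p^{4n+2})/(4\cdot 5^n F_{2p}^{2n})$; the sign $(-1)^{n-p}$ arises from combining the $(-1)^n$ of Lemma \ref{lem.x6c6bl9} with the $(-1)^p$ that appears in the $U_{2n}$ evaluation.

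The main obstacle is the algebraic bookkeeping in this last step. Unlike the smoother argument $L_{2p}/(\sqrt{5}F_{2p})$ treated by \eqref{kc1e6ox}, where the Binet expansion cleanly produces $L_p^{2m}\pm 5^m F_p^{2m}$, the present argument carries an extra factor of $L_p$ in the denominator (since $F_{2p}=F_pL_p$), so one must shuttle powers of $L_p$ between numerator and denominator to reconstitute the target $F_{2p}^{2n} = (F_pL_p)^{2n}$ downstairs while exposing the compact factor $L_p^{4n+2}-5^{2n+1}F_p^{4n+2}$ upstairs. Equivalently, this reduction amounts to recognizing the geometric sum $\sum_{j=0}^{2n}r^j = (r^{2n+1}-1)/(r-1)$ at $r = 5F_p^2/L_p^2$ inside the Binet expansion of $U_{2n}(x)$; identifying this telescoping and confirming that the resulting sign matches $(-1)^{n-p}$ is the delicate part of the calculation.
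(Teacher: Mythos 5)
Your overall strategy --- reduce via Lemma \ref{lem.x6c6bl9} to a single Chebyshev value and then evaluate $U_{2n}$ there by the Binet-type formula --- is exactly the paper's, and you even point at the right evaluation formula \eqref{kc1e6ox}. The gap is in the substitution point. You take the printed left-hand side at face value and set $x = L_{2p}/(\sqrt5 F_p)$. With that choice, $x^2-1 = (L_{2p}^2-5F_p^2)/(5F_p^2)$, and the numerator $L_{2p}^2-5F_p^2$ does \emph{not} collapse to $\pm 4$; the clean identity is $L_{2p}^2-5F_{2p}^2=4$. Your proposed simplification via ``$L_p^2-5F_p^2=4(-1)^{p+1}$'' silently replaces $L_{2p}^2$ by $L_p^2$, and no amount of shuttling powers of $L_p$ afterwards can rescue it: the identity with $F_p$ on the left-hand side is actually false (at $p=2$, $n=1$ the left side is $1-196/5=-191/5$ while the stated right side is $-151/45$). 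The displayed $F_p$ is a typo for $F_{2p}$, as the analogous theorem earlier in the paper and the intended substitution $x=L_{2p}/(\sqrt5 F_{2p})$ confirm.

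With the corrected substitution $x=L_{2p}/(\sqrt5 F_{2p})$, everything you describe becomes unnecessary work: $(-4)^k x^{2k}=(-1)^k\bigl(2L_{2p}/(\sqrt5 F_{2p})\bigr)^{2k}$ matches the (corrected) left-hand side, $\sqrt{x^2-1}=2/(\sqrt5 F_{2p})$, and \eqref{kc1e6ox} taken at index $2n+1$ gives
\[
U_{2n}\Bigl(\frac{L_{2p}}{\sqrt5 F_{2p}}\Bigr)=\frac{(-1)^p\bigl(L_p^{4n+2}-5^{2n+1}F_p^{4n+2}\bigr)}{4\,(\sqrt5 F_{2p})^{2n}},
\]
so multiplying by the $(-1)^n$ from Lemma \ref{lem.x6c6bl9} finishes the proof. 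Your geometric-sum factorization of $L_p^{4n+2}-5^{2n+1}F_p^{4n+2}$ is essentially the derivation of \eqref{kc1e6ox} itself (via \eqref{eq.r1j2cpo}), not an extra step needed once that formula is invoked at the correct argument.
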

\begin{proof}
Set $ x=L_{2p}/(\sqrt 5 F_{2p})$ in~\eqref{eq.f8j08d9} and use \eqref{kc1e6ox}.
\end{proof}
\begin{theorem}
If $n$ is a non-negative integer and $p$ and $q$ are non-zero integers, then
\begin{equation*}
\sum_{k = 0}^n {( - 1)^{(p - q)(n-k)} \binom{n + k}{n - k} \left( {\frac{{F_{p + q} F_{p - q} }}{{F_p F_q }}} \right)^{2k} }  = \frac{{F_p^{4n + 2}  + ( - 1)^{p - q} F_q^{4n + 2}}}{F_q^{2n} F_p^{2n}(F_p^2  + ( - 1)^{p - q} F_q^2 )}.
\end{equation*}
\end{theorem}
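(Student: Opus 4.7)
The plan is to apply Lemma~\ref{lem.x6c6bl9} with the substitution $x=\frac{(-i)^{p-q+1}F_{p+q}F_{p-q}}{2F_qF_p}$, exactly the substitution that produced the $T$-analogue with $F_p^{4n}+F_q^{4n}$ in the numerator. Since Lemma~\ref{lem.x6c6bl9} returns $U_{2n}$ rather than $T_{2n}$, the resulting closed form at this argument will shift $2n\mapsto 2n+1$ in the exponents of $F_p$ and $F_q$, producing $F_p^{4n+2}+(-1)^{p-q}F_q^{4n+2}$ together with the denominator factor $F_p^{2}+(-1)^{p-q}F_q^{2}$.

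Concretely, $(-i)^{2(p-q+1)}=(-1)^{p-q+1}$ gives
$$(-4)^{k}x^{2k}=(-1)^{(p-q)k}\Bigl(\frac{F_{p+q}F_{p-q}}{F_pF_q}\Bigr)^{2k},$$
so the left-hand side of \eqref{eq.f8j08d9} becomes $\sum_{k=0}^{n}(-1)^{(p-q)k}\binom{n+k}{n-k}\bigl(\tfrac{F_{p+q}F_{p-q}}{F_pF_q}\bigr)^{2k}$. For the right-hand side, use the evenness of $U_{2n}$ to write $U_{2n}(x)=U_{2n}(y)$ with $y=\frac{i^{p-q+1}F_{p+q}F_{p-q}}{2F_pF_q}$, and apply \eqref{eq.ormac9f} after the index shift $n-1\mapsto 2n$. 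Equivalently, a direct Binet computation using $y+\sqrt{y^{2}-1}=i^{p-q+1}F_p/F_q$ and $y-\sqrt{y^{2}-1}=(-1)^{p-q+1}i^{p-q+1}F_q/F_p$ yields
$$U_{2n}(y)=(-1)^{n(p-q+1)}\,\frac{F_p^{4n+2}+(-1)^{p-q}F_q^{4n+2}}{F_p^{2n}F_q^{2n}\bigl(F_p^{2}+(-1)^{p-q}F_q^{2}\bigr)}.$$
The pre-factor $(-1)^n$ from Lemma~\ref{lem.x6c6bl9} collapses the signs to $(-1)^{n(p-q)}$; multiplying the whole identity by $(-1)^{n(p-q)}$ then converts the inner sign $(-1)^{(p-q)k}$ to $(-1)^{(p-q)(n-k)}$ and leaves the right-hand side exactly as stated.

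The main obstacle is honest bookkeeping of the $i$-powers. Since $p-q+1$ can be odd, $y$ is genuinely complex and the printed form of \eqref{eq.ormac9f} tacitly carries a factor $i^{(n-1)(p-q+1)}$; under the shift $n-1\mapsto 2n$ this factor reduces to the real sign $(-1)^{n(p-q+1)}$, which is precisely what conspires with the $(-1)^n$ from the lemma and the $(-1)^{(p-q)k}$ from the substitution to produce the parity-free right-hand side. Once the $i$-powers are handled, the remainder is routine algebra structurally identical to the earlier $T$-based proof.
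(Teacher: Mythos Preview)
Your proof is correct and follows exactly the paper's approach: substitute $x=\frac{(-i)^{p-q+1}F_{p+q}F_{p-q}}{2F_qF_p}$ into Lemma~\ref{lem.x6c6bl9} (identity~\eqref{eq.f8j08d9}) and evaluate the resulting $U_{2n}$ via~\eqref{eq.ormac9f} with the shift $n-1\mapsto 2n$. Your explicit tracking of the $i^{(n-1)(p-q+1)}$ factor implicit in~\eqref{eq.ormac9f} is a welcome clarification that the paper's terse proof omits.
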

\begin{proof}
Set  $\displaystyle x=\frac{(-i)^{p - q + 1}F_{p + q} F_{p - q}}{2 F_q F_p}$ in \eqref{eq.f8j08d9} and use \eqref{eq.ormac9f}.
\end{proof}

\section{Related combinatorial sums}

This section does not deal with Fibonacci numbers but is inspired by a recent article by Chu and Guo \cite{ChuGuo}. In this article the authors study combinatorial sums of the form
\begin{equation*}
\sum_{k=0}^n (-1)^k \frac{\binom {n+\lambda}{2k+\delta}}{\binom {n}{k}},
\end{equation*}
where $\lambda,n\in\mathbb{N}_0$ and $\delta\in\{0,1\}$. We show how these sums are related to the sums studied in this paper via Theorem \ref{thm1}.
\begin{theorem}\label{comb_thm}
For $0<n+m\leq 2n+1$ we have the relation
\begin{equation*}
\sum_{k=0}^{n+m}  \frac{(-2)^k}{(n+m+k)(n-m+k+2)}\frac{\binom {n+m+k}{n+m-k}}{\binom {n-m+k+1}{k}}
= \frac{1}{2(n+1)(n+m) }\sum_{k=0}^n (-1)^k \frac{\binom {n+m}{2k}}{\binom {n}{k}}.
\end{equation*}
In particular,
\begin{gather*}
\sum_{k=0}^{n} \frac{(-2)^k}{(n+k)(n+k+1)(n+k+2)}\frac{\binom {n+k}{n-k}}{\binom {n+k}{k}}
= \frac{1}{2n(n+1)^2 }\sum_{k=0}^n (-1)^k \frac{\binom {n}{2k}}{\binom {n}{k}},\\
\sum_{k=0}^{2n}  \frac{(-2)^k}{(2n+k)(k+1)(k+2)} \binom {2n+k}{2n-k}
=\frac{1}{4n(n+1)} \sum_{k=0}^n (-1)^k \frac{\binom {2n}{2k}}{\binom {n}{k}}.
\end{gather*}
\end{theorem}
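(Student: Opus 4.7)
The plan is to recognize the left-hand side as an integral whose integrand is exactly the sum appearing in \eqref{main_id1}, and then reduce the remaining integral using the explicit polynomial form \eqref{ChebRep1}. First I would invoke the Beta-function identity
\begin{equation*}
\frac{1}{(n-m+k+2)\binom{n-m+k+1}{k}} = B(n-m+2,k+1) = \int_0^1 t^{n-m+1}(1-t)^k\,dt,
\end{equation*}
which is valid because the hypothesis $n+m\le 2n+1$ is equivalent to $n-m+1\ge 0$. Substituting this representation into each summand on the left and interchanging summation with integration, the left-hand side becomes
\begin{equation*}
\int_0^1 t^{n-m+1}\sum_{k=0}^{n+m}\frac{(-2)^k}{n+m+k}\binom{n+m+k}{n+m-k}(1-t)^k\,dt.
\end{equation*}

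Next, I would apply \eqref{main_id1} with $n$ replaced by $n+m$ and the upper signs, so the inner sum equals $T_{n+m}(t)/(n+m)$. The left-hand side reduces to
\begin{equation*}
\frac{1}{n+m}\int_0^1 t^{n-m+1}\,T_{n+m}(t)\,dt.
\end{equation*}
Expanding $T_{n+m}(t)$ via \eqref{ChebRep1}, each integral takes the form $\int_0^1 t^{2n+1-2j}(t^2-1)^j\,dt$. The substitution $u=t^2$ converts this into $\tfrac{(-1)^j}{2}B(n-j+1,j+1) = \tfrac{(-1)^j}{2(n+1)\binom{n}{j}}$. Summing over $j$ then yields the factor $\frac{1}{2(n+1)(n+m)}\sum_{j}(-1)^j\binom{n+m}{2j}/\binom{n}{j}$.

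The main bookkeeping point — and the only genuine obstacle — is the upper limit of the final sum. The expansion of $T_{n+m}$ runs over $0\le j\le \lfloor (n+m)/2\rfloor$, whereas the right-hand side of the theorem sums $j$ from $0$ to $n$. Here the hypothesis $n+m\le 2n+1$ plays two roles at once: it ensures $\lfloor(n+m)/2\rfloor\le n$, so every $\binom{n}{j}$ that appears is nonzero (the Beta integral computation is valid), and simultaneously the missing terms for $\lfloor(n+m)/2\rfloor<j\le n$ contribute nothing since $\binom{n+m}{2j}=0$. Extending the range therefore costs nothing, giving the claimed identity. The particular cases $m=0$ and $m=n+1$ stated in the theorem then follow by direct substitution.
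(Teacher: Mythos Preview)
Your argument is correct and follows essentially the same route as the paper: start from \eqref{main_id1} with $n\to n+m$, multiply by $t^{\,n-m+1}$ and integrate over $[0,1]$, evaluate the left via the Beta integral and the right via the expansion \eqref{ChebRep1} plus the substitution $u=t^2$. Your treatment of the upper limit is in fact cleaner than the paper's: the relevant point is that $\binom{n+m}{2j}=0$ for $j>\lfloor(n+m)/2\rfloor$, which lets you extend the sum to $j=n$ without introducing any $1/\binom{n}{j}$ with $j>n$.

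One small slip: the second particular case corresponds to $m=n$ (so that $n+m=2n$, $n-m+k+2=k+2$, and $\binom{n-m+k+1}{k}=\binom{k+1}{k}=k+1$), not $m=n+1$.
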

\begin{proof}
From Theorem \ref{thm1} upon replacing $n$ by $n+m$ we get
\begin{equation*}
\sum_{k=0}^{n+m} (-2)^k \frac{n+m}{n + m + k} \binom{n + m + k}{n + m - k} (1-x)^k = T_{n+m}(x).
\end{equation*}
Multiplying through by $x^{n-m+1}$ and integrating from 0 to 1 results in
\begin{equation*}
\sum_{k=0}^{n+m} (-2)^k \frac{n+m}{n + m + k} \binom{n + m + k}{n + m - k} \int_0^1 x^{n-m+1} (1-x)^k dx = \int_0^1 x^{n-m+1} T_{n+m}(x)dx.
\end{equation*}

The left-hand side evaluated using the beta function $B(a,b)$ given by
\begin{equation*}
B(a,b) = \int_0^1 x^{a-1} (1-x)^{b-1} dx,
\end{equation*}
and equals
\begin{equation*}
\sum_{k=0}^{n+m} (-2)^k\frac{n+m}{n + m + k} \binom{n + m + k}{n + m - k} B(n-m+2,k+1)
\end{equation*}
with
\begin{equation*}
B(n-m+2,k+1) = \frac{1}{n-m+k+2}\binom {n-m+k+1}{k}^{-1}.
\end{equation*}

The right-hand side is evaluated similarly using the representation from \eqref{ChebRep1}. We have
\begin{align*}
\int_0^1 x^{n-m+1} T_{n+m}(x)dx & =  \sum_{k=0}^{\lfloor{(n+m)/2}\rfloor} (-1)^k \binom {n+m}{2k} \int_0^1 x^{2n-2k+1} (1-x^2)^k dx \\
& = \frac{1}{2} \sum_{k=0}^{\lfloor{(n+m)/2}\rfloor} (-1)^k \binom {n+m}{2k} \int_0^1 z^{n-k} (1-z)^k dz \quad (z=x^2) \\
& = \frac{1}{2} \sum_{k=0}^{\lfloor{(n+m)/2}\rfloor} (-1)^k \binom {n+m}{2k} B(n-k+1,k+1) \\
& = \frac{1}{2(n+1)} \sum_{k=0}^{\lfloor{(n+m)/2}\rfloor} (-1)^k \frac{\binom {n+m}{2k}}{\binom {n}{k}}.
\end{align*}
The final expression follows as $\binom {n}{k}=0$ if $k>n$.
\end{proof}

Keeping in mind that
$$ 
\frac{n}{n + k} \binom{n + k}{n - k} = \frac{\binom {n+k-1}{k} \binom {n}{k}}{\binom {2k}{k}},
$$
Theorem \ref{comb_thm} is actually an identity containing four binomial coefficients
\begin{equation*}
 \sum_{k=0}^{n+m}  \frac{(-2)^k}{n-m+k+2}\frac{\binom {n+m+k-1}{k}}{\binom {n-m+k+1}{k}}\frac{\binom {n+m}{k}}{\binom {2k}{k}}
= \frac{1}{2(n+1)}\sum_{k=0}^n (-1)^k \frac{\binom {n+m}{2k}}{\binom {n}{k}}.
\end{equation*}

The special case $m=0$ gives the combinatorial relation valid for all $n\geq 1$
\begin{equation*}
\sum_{k=0}^{n}  \frac{(-2)^k}{(n+k)(n+k+1)(n+k+2)} \frac{\binom {n}{k}}{\binom {2k}{k}}
= \frac{1}{2n(n+1)} - \frac{1}{4(n+1)^2} \sum_{k=0}^n \frac{1}{\binom {n}{k}},
\end{equation*}
as Chu and Guo have shown \cite[Proposition 5]{ChuGuo} that
\begin{equation*}
\sum_{k=0}^n (-1)^k \frac{\binom {n}{2k}}{\binom {n}{k}} = n + 1 - \frac{n}{2} \sum_{k=0}^n \frac{1}{\binom {n}{k}}.
\end{equation*}

We continue with some more examples based on Chu and Guo's results. 
\begin{example}
When $m=1,2,3, 4$, then we can use the following evaluation from \cite{ChuGuo}
\begin{gather*}
\sum_{k=0}^n (-1)^k \frac{\binom {n+1}{2k}}{\binom {n}{k}} = 2 - \sum_{k=0}^n \frac{1}{\binom {n}{k}},\qquad 
\sum_{k=0}^n (-1)^{k+1} \frac{\binom {n+2}{2k}}{\binom {n}{k}} = \frac{2}{n},\\
\sum_{k=0}^n (-1)^{k+1} \frac{\binom {n+3}{2k}}{\binom {n}{k}} = \frac{2(n-3)}{n(n-1)},\qquad
\sum_{k=0}^n (-1)^{k+1} \frac{\binom {n+4}{2k}}{\binom {n}{k}} =  \frac{2(n^2-7n+16)}{(n-2)(n-1)n},
\end{gather*}
to get the following summation formulas: 
\begin{gather*}
\sum_{k=0}^{n+1}  \frac{(-2)^k }{(n+1+k)^2}\frac{\binom {n+1+k}{n+1-k}}{\binom {n+k}{k}}
= \frac{1}{2(n+1)^2}\Big ( 2 - \sum_{k=0}^n \frac{1}{\binom {n}{k}} \Big ),\quad n\geq 0,\\
\sum_{k=0}^{n+2}  \frac{(-2)^{k} }{(n+k)(n+2+k)}\frac{\binom {n+2+k}{n+2-k}}{\binom {n-1+k}{k}}
= \frac{-1}{n(n+1)(n+2)},\quad n\geq 1,\\
\sum_{k=0}^{n+3}  \frac{(-2)^{k} }{(n-1+k)(n+3+k)}\frac{\binom {n+3+k}{n+3-k}}{\binom {n-2+k}{k}}
= \frac{3-n}{(n-1)n(n+1)(n+3)},\quad n\geq 2,\\
\sum_{k=0}^{n+4}  \frac{(-2)^{k} }{(n-2+k)(n+4+k)}\frac{\binom {n+4+k}{n+4-k}}{\binom {n-3+k}{k}}
= \frac{-n^2+7n-16}{(n-2)(n-1)n(n+1)(n+4)}, \quad n\geq 3.
\end{gather*}
\end{example}

Working with the second part of Theorem \ref{thm1}, applying the same arguments but using the elementary integral
\begin{equation*}
\int_0^1 x^{n-m+1} (1+x)^k dx = \sum_{j=0}^k  \frac{\binom {k}{j}}{n-m+2+j}
\end{equation*}
yields the next theorem involving a double sum.
\begin{theorem}\label{comb_thm2}
For $m-1\leq n$, we have the relation
\begin{align*}
 \sum_{j=0}^{n+m}\sum_{k=0}^{n+m-j} & \frac{(-2)^{k+j}}{(n+m+k+j)(n-m+j+2)} \binom {n+m+k+j}{n+m-k-j} \binom {k+j}{j}
 \nonumber \\
& \qquad = \frac{(-1)^{n+m}}{2(n+1)(n+m) } \sum_{k=0}^n (-1)^k \frac{\binom {n+m}{2k}}{\binom {n}{k}}.
\end{align*}
In particular,
\begin{gather*}
\sum_{j=0}^{n}\sum_{k=0}^{n-j}  \frac{(-2)^{k+j}}{(n+k+j)(n+j+2)} \binom {n+k+j}{n-k-j} \binom {k+j}{j} 
= \frac{(-1)^{n}}{2n(n+1)} \sum_{k=0}^n (-1)^k \frac{\binom {n}{2k}}{\binom {n}{k}},\\
\sum_{j=0}^{2n}\sum_{k=0}^{2n-j}  \frac{(-2)^{k+j}}{(2n+k+j)(j+2)} \binom {2n+k+j}{2n-k-j} \binom {k+j}{j} 
= \frac{1}{4n(n+1)}\sum_{k=0}^n (-1)^k \frac{\binom {2n}{2k}}{\binom {n}{k}}.
\end{gather*}
\end{theorem}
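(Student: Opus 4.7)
The plan is to mirror the argument of Theorem~\ref{comb_thm}, but now using the bottom-sign choice in~\eqref{main_id1} so that the factor $(1-x)^k$ is replaced by $(1+x)^k$, which will expand into the inner $j$-summation appearing in the double sum. Concretely, I would start from
\begin{equation*}
\sum_{k=0}^{n+m} (-2)^k \frac{n+m}{n+m+k} \binom{n+m+k}{n+m-k} (1+x)^k = (-1)^{n+m}\, T_{n+m}(x),
\end{equation*}
obtained from~\eqref{main_id1} after replacing $n$ by $n+m$ and taking the lower sign.

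Next, I would multiply both sides by $x^{n-m+1}$ and integrate from $0$ to $1$. The hypothesis $m-1\le n$ is precisely what makes $x^{n-m+1}$ integrable and the Beta-function values below legitimate. On the right-hand side, the integral $\int_0^1 x^{n-m+1} T_{n+m}(x)\,dx$ is exactly the quantity computed in the proof of Theorem~\ref{comb_thm}, so it equals
\begin{equation*}
\frac{1}{2(n+1)} \sum_{k=0}^n (-1)^k \frac{\binom{n+m}{2k}}{\binom{n}{k}},
\end{equation*}
and carrying the factor $(-1)^{n+m}$ across produces the right-hand side of the stated identity, up to the overall factor $(n+m)$ that will be absorbed in the next step.

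On the left-hand side, I would expand $(1+x)^k = \sum_{j=0}^{k}\binom{k}{j} x^j$ and use the elementary integral
\begin{equation*}
\int_0^1 x^{n-m+1}(1+x)^k\,dx = \sum_{j=0}^{k} \frac{\binom{k}{j}}{n-m+j+2}
\end{equation*}
cited just before the theorem. This turns the left-hand side into the iterated sum
\begin{equation*}
(n+m)\sum_{k=0}^{n+m} \sum_{j=0}^{k} \frac{(-2)^k}{(n+m+k)(n-m+j+2)} \binom{n+m+k}{n+m-k}\binom{k}{j}.
\end{equation*}

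Finally, I would swap the order of summation and re-index by $k \mapsto k+j$, so that the outer index becomes $j\in\{0,\dots,n+m\}$ and the inner one runs over $k\in\{0,\dots,n+m-j\}$; this converts $(-2)^k\binom{k}{j}$ into $(-2)^{k+j}\binom{k+j}{j}$ and $\binom{n+m+k}{n+m-k}$ into $\binom{n+m+k+j}{n+m-k-j}$, exactly matching the left-hand side claimed in Theorem~\ref{comb_thm2} after dividing through by $n+m$. The two specializations displayed follow at once by taking $m=0$ and $m=n+1$ (noting that $\binom{n}{k}=0$ for $k>n$ again truncates the right-hand sum). The only delicate point is the bookkeeping of signs and the change of summation variable; neither is serious, so the argument is essentially a companion to the proof of Theorem~\ref{comb_thm} with $(1-x)^k$ replaced by $(1+x)^k$.
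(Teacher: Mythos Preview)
Your approach is essentially the same as the paper's: take the lower-sign version of~\eqref{main_id1}, multiply by $x^{n-m+1}$, integrate over $[0,1]$, evaluate the right-hand side exactly as in Theorem~\ref{comb_thm}, and handle the left-hand side via the elementary integral $\int_0^1 x^{n-m+1}(1+x)^k\,dx=\sum_{j=0}^k \binom{k}{j}/(n-m+j+2)$ followed by the index swap $k\mapsto k+j$. One small slip: the second displayed specialization comes from $m=n$ (so that $n+m=2n$ and $n-m+j+2=j+2$), not from $m=n+1$.
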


Similar double sums involving ratios of binomial coefficients were studied recently by
Stenlund and Wan \cite{Sten}.

The analogous identities involving $\binom{n+m}{2k+1}$ should follow from \eqref{main_id5}
in conjunction with \eqref{ChebRep2}. As we do not want to overextend the length of the paper
we leave the details for a personal study.

\end{document}